\numberwithin{equation}{section}
\newtheorem{theorem}{Theorem}[section]
\newtheorem{rem}[theorem]{Remark}
\newtheorem{defn}[theorem]{Definition}
\newtheorem{lemma}[theorem]{Lemma}
\newtheorem{prop}[theorem]{Proposition}
\newtheorem{corollary}[theorem]{Corollary}
\def\det{\mathop{\rm det}\nolimits}
\def\Re{\mathop{\rm Re}\nolimits} 
\def\Im{\mathop{\rm Im}\nolimits} 
\def\tr{\mathop{\rm tr}\nolimits}
\def\dbar{\bar\partial}
\def\ddbar{\partial\bar\partial}
\def\d{\partial}
\def\cG{{\mathcal G}}
\def\cR{{\mathcal R}}
\def\cE{{\mathcal E}}
\def\cC{{\mathcal C}}
\def\cM{{\mathcal M}}
\def\cK{{\mathcal K}}
\def\cV{{\mathcal V}}
\def\cU{{\mathcal U}}
\def\cH{{\mathcal H}}
\let\ep=\varepsilon
\let\vp=\varphi 
\def\bC{{\mathbb C}}
\def\a{{\alpha}}
\def\g{\gamma}
\def\b{\beta}
\def\k{{\kappa}}
\def\l{{\ell}}
\def\t{{\tau}}
\def\kf{{\mathfrak{f}}}
\title[ geodesics approximation]
{Approximation of weak geodesics and subharmonicity of Mabuchi energy, II: $\ep$-geodesics}
\author{Long Li}
\address{Mathematics Institute of ShanghaiTech University, 393 Middle Huaxia Road, Pudong 201210, Shanghai, China}
\email{lilong1@shanghaitech.edu.cn}
\address{Science Institute, University of Iceland, Reykjavik, Iceland.}
\email{ longli@hi.is }
\begin{document}
\maketitle 

\begin{abstract}
The purpose of this article is to study the strict convexity 
of the Mabuchi functional along a $\cC^{1,\bar 1}$-geodesic,
with the aid of the $\ep$-geodesics. We proved the $L^2$-convergence 
of the fiberwise volume element of the $\ep$-geodesic. 
Moreover, the geodesic is proved to be uniformly fiberwise non-degenerate
if the Mabuchi functional is $\ep$-affine.

\end{abstract}

\section{Introduction}
In order to study the uniqueness and existence problems of the K\"ahler-Einstein metrics 
on a Fano manifold $X$, Mabuchi (\cite{Ma}, \cite{BM}) introduced a useful energy functional $\cM$,
called the $K$-energy or Mabuchi functional, on the space $\cH_{[\omega]}$.
This is the space of all smooth K\"ahler potentials 
in a given cohomology class $[\omega]$ on $X$,
and it can be written as 
$$ \cH_{[\omega]}: = \{ \vp\in C^{\infty}(X); \ \ \omega+ dd^c\vp >0 \}.   $$
Through out this paper, we always normalize the K\"ahler class as 
$\int_X \omega^n =1$. 

It is also observed by Mabuchi (\cite{Ma}) that this functional $\cM$ is convex along a smooth geodesic $\cG$, 
connecting with arbitrary two points in $\cH_{[\omega]}$. 
Moreover, if the Mabuchi functional is affine along a smooth geodesic $\cG$,
then $\cG$ must be generated by a holomorphic vector field. 
In this case, we say that the Mabuchi functional is strictly convex along the geodesic $\cG$. 

It turns out that the Mabuchi functional $\cM$ has also played an important role in the study of the 
constant scalar curvature K\"ahler(cscK) metrics. 
First, the convexity of $\cM$ is crucial in the proof of the uniqueness of the cscK metrics (\cite{BB}, \cite{CLP}, \cite{CPZ}).
Second,  on a K\"ahler manifold, the asymptotic behavior of this convex function along a geodesic ray $\cG$ is an invariant $\Upsilon$ (\cite{CC1}, \cite{CC2}). 
It is proved  that this manifold admits a cscK metric if and only if it is geodesic-stable (Theorem (1.2), \cite{CC2}).
That is to say, either we have $\Upsilon>0$, or $\Upsilon =0$ and the ray $\cG$ is parallel to another geodesic ray generated by a holomorphic vector field.

However, one can not expect that there always exists a smooth geodesic connecting 
two points in $\cH_{[\omega]}$, due to the example in Darvas-Lempert (\cite{LD}).  
In realty, we can only rely on  the so called $\cC^{1, \bar 1}$-geodesic $\cG$. 
As the solution of the homogenous complex Monge-Amp\`ere equation (see equation (\ref{pre-0000})),
it always exists (\cite{C00}) between arbitrary two points in $\cH_{[\omega]}$.
However, the difficulties to deal with a $\cC^{1, \bar 1}$-geodesic 
are the lack of the regularities and the possible degeneracy of $\cG$. 

Nevertheless, the Mabuchi functional $\cM$ was proved to be 
convex and continuous along a $\cC^{1,\bar 1}$-geodesic, 
by the work of Berman-Berndtsson (\cite{BB}) and also Chen-Li-P\u aun (\cite{CLP}). 
Then people try to ask the question if the Mabuchi functional is also strictly convex 
along such a geodesic. 

If the boundary of a geodesic segment  $\cG$ has merely $\cC^{1,\bar 1}$-regularities, 
then the answer is negative due to the example of Berman (\cite{Ber}).
However, the situation becomes very different if we require a stronger condition
on the boundary regularities of $\cG$. 
The answer is affirmative  (\cite{LL}), when 
$\cG$ is connecting with two non-degenerate energy minimizers of $\cM$. 
According to the work of He-Zeng (\cite{HZ}),
the boundary of $\cG$ is actually smooth in this case.  
Another example is on a toric K\"ahler manifold. 
Then it is well known that a geodesic segment must be smooth and non-degenerate 
if it is boundary is.
In these two examples, 
the strict convexity of $\cM$ follows from the smoothness of the boundary of the geodesic. 
For this reason, we always assume that the boundary of a geodesic $\cG$
belongs to the space $\cH_{[\omega]}$ through out this paper. 

In order to circumvent the difficulties arising from a $\cC^{1,\bar 1}$-geodesic $\cG$,
we are appealing to using the so called $\ep$-geodesic $\cG_{\ep}$ (\cite{C00}). 
The $\ep$-geodesic $\cG_{\ep}$  is a sequence of smooth  approximation of $\cG$,
satisfying a non-homogenous complex Monge-Amp\`ere equation (see equation (\ref{int-001})). 
In fact, we have proved in (\cite{CLP}) that the Mabuchi functional is \emph{almost convex}
along the $\ep$-geodesic. 

However, the difficulty is that the convergence $\cG_{\ep} \rightarrow \cG$
is merely weakly $L^p$ for all $p >1$. 
In other words, if we denote 
$\vp_{\ep}(t, \cdot)$ as the fiberwise $\ep$-geodesic potential of $\cG_{\ep}|_{X_t}$, and $\vp(t,\cdot )$ the fiberwise geodesic potential of $\cG|_{X_t}$,
then it is not clear to us whether the 
energy $\cM(\vp_{\ep})$
converges to the energy $\cM(\vp)$ or not. 
Therefore, we can not directly conclude the convexity of $\cM$ 
by using the $\ep$-geodesic in \cite{CLP}.

The new observation is that the convergence $\cM(\vp_\ep) \rightarrow \cM(\vp)$
is true, provided that the Mabuchi function $\cM$ is affine along $\cG$ (see Theorem (\ref{thm-ae-001}) and  Corollary (\ref{cor-l2-001})). 
This first leads us to the following $L^2$-convergence of the fiberwise volume element of $\cG_{\ep}$. 
Write $\omega_{\ep}: = \cG_{\ep}|_{X_t}$ and $\omega_{\vp}: = \cG|_{X_t} $ on a fiber $X_t: = \{t\}\times X $
for any $t\in [0,1]$.

\begin{theorem}[Theorem (\ref{thm-l2-001})]
\label{thm-intr-001}
Suppose the Mabuchi functional $\cM$ is affine along a $\cC^{1,\bar 1}$-geodesic $\cG$. 
Then the fiber-wise volume element of the $\ep$-geodesic converges to the 
volume element of the geodesic in the strong $L^2$ sense.
In other words, we have on each fiber $X_t$
\begin{equation}
\label{intr-000}
 \frac{\omega_{\ep}^n}{\omega^n} \rightarrow \frac{\omega_{\vp}^n}{\omega^n}, \ \ \  \ep\rightarrow 0,  
\end{equation}
under the $L^2$-norm, possibly after passing to a subsequence. 
\end{theorem}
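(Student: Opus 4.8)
The plan is to reduce the asserted $L^2$-convergence to a convergence of entropies and then to exploit the strict convexity of the map $s\mapsto s\log s$. Fix $t\in[0,1]$ and write $g_\ep:=\omega_\ep^n/\omega^n$ and $g:=\omega_\vp^n/\omega^n$ on the fiber $X_t$. Since $\cG$ has $\cC^{1,\bar 1}$-regularity and the $\ep$-geodesics satisfy Chen's uniform a priori estimates, both families are bounded, $0\le g_\ep,\,g\le C$ in $L^\infty(X,\omega^n)$ with $C$ independent of $\ep$, while the weak $L^p$-convergence $\cG_\ep\to\cG$ gives $g_\ep\rightharpoonup g$ weakly (after passing to a subsequence). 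I would organize the argument around the Chen--Tian decomposition $\cM=\mathrm{Ent}+\cE$ on each fiber, where $\mathrm{Ent}(\vp)=\int_X \log(\omega_\vp^n/\omega^n)\,\omega_\vp^n=\int_X g\log g\,\omega^n$ is the relative entropy and $\cE$ collects the Monge--Amp\`ere and Ricci energy terms. Two structural inputs are used: $\cE$ is continuous under the convergence at hand, so $\cE(\vp_\ep)\to\cE(\vp)$; and $\mathrm{Ent}$ is convex and weakly lower semicontinuous, so $\liminf_\ep\mathrm{Ent}(\vp_\ep)\ge\mathrm{Ent}(\vp)$.

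The core step is to produce the matching \emph{upper} bound for the entropy. Because the $\ep$-geodesic shares the smooth boundary data of $\cG$, one has $\cM(\vp_\ep)=\cM(\vp)$ at $t=0,1$. The almost convexity of $\cM$ along $\cG_\ep$ from \cite{CLP} gives
\[
\cM(\vp_\ep(t))\le (1-t)\,\cM(\vp_\ep(0))+t\,\cM(\vp_\ep(1))+o(1),\qquad \ep\to 0,
\]
and the hypothesis that $\cM$ is \emph{affine} along $\cG$ turns the right-hand side into $\cM(\vp(t))+o(1)$. Hence $\limsup_\ep\cM(\vp_\ep(t))\le\cM(\vp(t))$. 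Subtracting the energy convergence then yields $\limsup_\ep\mathrm{Ent}(\vp_\ep(t))\le\mathrm{Ent}(\vp(t))$, and together with lower semicontinuity this forces
\[
\int_X g_\ep\log g_\ep\,\omega^n\longrightarrow \int_X g\log g\,\omega^n .
\]

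With the entropies converging, I would invoke a standard convex-analysis principle (of Visintin type): weak $L^1$-convergence $g_\ep\rightharpoonup g$ together with convergence of the strictly convex functional $\int g_\ep\log g_\ep\,\omega^n$ promotes the convergence to strong $L^1$, $\|g_\ep-g\|_{L^1}\to 0$. Interpolating against the uniform bound finishes the proof, since
\[
\|g_\ep-g\|_{L^2}^2\le \|g_\ep-g\|_{L^\infty}\,\|g_\ep-g\|_{L^1}\le 2C\,\|g_\ep-g\|_{L^1}\longrightarrow 0,
\]
which is precisely (\ref{intr-000}) in the $L^2$-norm (and in fact in every $L^p$, $p<\infty$); the subsequence enters when extracting the weak limit and applying the convex-analysis lemma.

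The main obstacle is the upper bound $\limsup_\ep\cM(\vp_\ep(t))\le\cM(\vp(t))$, and this is exactly where the affineness hypothesis is indispensable: convexity of $\cM$ along $\cG$ alone would give an inequality in the wrong direction, whereas affineness collapses the boundary interpolation to the value $\cM(\vp(t))$ itself. I therefore expect the technical heart to be a careful verification that the error in the almost-convexity estimate of \cite{CLP} is genuinely $o(1)$ as $\ep\to 0$, together with checking that $\cE$ is continuous and $\mathrm{Ent}$ weakly lower semicontinuous under the merely weak $L^p$-convergence $\cG_\ep\to\cG$ rather than in a stronger topology. The passage from entropy convergence to strong convergence and the final interpolation are comparatively routine once the uniform $L^\infty$-bound is secured.
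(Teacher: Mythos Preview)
Your strategy---reduce to convergence of entropies and then upgrade weak to strong convergence via strict convexity of $s\mapsto s\log s$---is correct and is exactly the paper's ``second proof'' (Section~4.3, Corollary~\ref{cor-l2-001}). The one point that needs sharpening is your appeal to \cite{CLP} for almost convexity of $\cM(\vp_\ep)$ itself: what \cite{CLP} actually proves (Theorem~\ref{int-thm-001}) is that the \emph{$A$-truncated} functional $\cK_{\ep,A}$ satisfies $\cK_{\ep,A}(t)-\ep C_A t(1-t)$ convex, with $C_A$ depending on $A$; there is no uniform-in-$A$ statement, and hence no direct almost convexity for $\cK_\ep$. The fix is the elementary monotonicity $\cK_\ep\le\cK_{\ep,A}$ (since $H(\vp_\ep)\le H_A(\vp_\ep)$): for one fixed $A$ large enough that the boundary values agree, convexity of $\widetilde\cK_{\ep,A}$ gives $\cK_\ep(t)\le\cK_{\ep,A}(t)\le(1-t)\cK(0)+t\cK(1)+\ep C_A t(1-t)=\cK(t)+O(\ep)$, hence $\limsup_\ep\cK_\ep(t)\le\cK(t)$ and your entropy upper bound follows. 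The paper reaches the same conclusion by a longer route through Theorem~\ref{thm-ae-001} ($\cK_A\equiv\cK$) and the gap phenomenon.

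For the passage from entropy convergence to $L^2$-convergence, your Visintin-type principle is valid (a Young-measure argument handles the set $\{g=0\}$ where $\Phi'(g)=1+\log g$ is unbounded), and the paper also cites a Chen--Tian result to this effect in Proposition~\ref{prop-app-001}. The paper's \emph{first} proof is different: it works with the truncated entropy $h_\k(s)=\max\{s\log s,\,s\log\k\}$, exploits the gap phenomenon for $\kf(\vp)$ (equation~\eqref{ae-006}), and derives an explicit quantitative bound $\limsup_\l\|f_\l-f\|_{L^2}^2\le C(\k_0)\max_{X_\t}\k$ (Lemma~\ref{lem-l2-0010}) by a four-case analysis. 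Your route is shorter and more conceptual; the paper's first proof gives a rate in terms of the truncation level and makes the role of the gap $\k_0$ explicit, which feeds into later sections.
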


We emphasis that the $L^2$-convergence of the volume element (equation (\ref{intr-000})) may not be true in general. 
Next, a slightly stronger condition than the affine Mabuchi functional will be introduced,  
aiming to resolve the possible degeneracy on the geodesic $\cG$. 
Write the restriction of the Mabuchi functional along a $\ep$-geodesic $\cG_{\ep}$ as 
$$\cK_{\ep} (t) : = \cM (\vp_{\ep} (t))$$ 
for all $t\in[0,1]$. 
Then we say that the Mabuchi functional is \emph{$\ep$-affine} along the geodesic $\cG$ if it satisfies 
\begin{equation}
\label{intr-001}
 \frac{d \cK_{\ep}}{dt}|_{t=1}  - \frac{d \cK_{\ep}}{dt}|_{t=0} = O(\ep),
\end{equation}
for all $\ep>0$ small (see Definition (\ref{def-ch-002})). 
We note that $\cG$ and $\cG_{\ep}$ are uniquely determined if the boundary of $\cG$ is given. 
In fact, 
the Mabuchi functional must be  
affine along the $\cC^{1,\bar 1}$-geodesic $\cG$ if it is $\ep$-affine along each $\cG_{\ep}$ (see Lemma (\ref{lem-eaf-0010})).
Moreover, we proved the following result.

\begin{theorem}[Theorem \ref{thm-ch-001}]
\label{thm-intr-002}
Suppose the Mabuchi functional $\cM$ is $\ep$-affine along a $\cC^{1,\bar 1}$-geodesic $\cG$.
Then $\cG$ is uniformly fiberwise non-degenerate, 
namely, there exists a uniform constant $\k_1 >0$ such that 
$$ \cG|_{X_t} > \kappa_1 \omega,$$
for almost everywhere $t\in [0,1]$.
\end{theorem}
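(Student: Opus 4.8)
\emph{Strategy.} The plan is to prove a lower bound on the fiberwise volume element of the $\ep$-geodesic that is \emph{uniform in} $\ep$, to convert it into a lower bound on the smallest eigenvalue of $\omega_\ep$, and then to transport that bound to the limit $\cG$. First I would record the reductions. The standard a priori (Laplacian) estimate for the $\ep$-geodesic furnishes a constant $C$, independent of $\ep$ and $t$, with $0<\omega_\ep\le C\omega$; hence the eigenvalues of $\omega_\ep$ relative to $\omega$ lie in $(0,C]$ and $\omega_\ep^n/\omega^n\le C^n$. Consequently, once a uniform lower bound $\omega_\ep^n/\omega^n\ge\kappa_0>0$ is available, the smallest eigenvalue is at least $\kappa_0/C^{n-1}=:\kappa_1$, i.e.\ $\omega_\ep\ge\kappa_1\omega$. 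For the volume element itself I would exploit the pointwise geodesic-density identity coming directly from the Monge--Amp\`ere equation defining $\cG_\ep$,
\[
 c_\ep\,\frac{\omega_\ep^n}{\omega^n}=\ep,\qquad c_\ep:=\ddot\vp_\ep-|\partial\dot\vp_\ep|^2_{\omega_\ep}>0 .
\]
Thus a uniform lower bound on the volume element is \emph{equivalent} to a uniform upper bound $c_\ep\le C_0\,\ep$, and the whole problem reduces to the estimate $\sup_{X_t}c_\ep=O(\ep)$, uniformly in $t$. From $\cC^{1,\bar 1}$ one only gets $0<c_\ep\le\ddot\vp_\ep\le C$, which merely yields $\omega_\ep^n/\omega^n\ge\ep/C$; the role of the hypothesis is precisely to remove this vanishing rate.

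Next I would bring in the $\ep$-affine hypothesis through the second variation of $\cM$. Along the smooth path $\vp_\ep(t)$ the Chen--Tian formula reads, up to a positive dimensional constant,
\[
 \cK_\ep''(t)=\int_{X_t}|\cD\dot\vp_\ep|^2\,\omega_\ep^n+\int_{X_t}c_\ep\,(\bar S-S_\ep)\,\omega_\ep^n ,\qquad \cD=\bar\partial\nabla^{1,0},
\]
with the first term non-negative. Integrating in $t$ and invoking (\ref{intr-001}),
\[
 \int_0^1\!\!\int_{X_t}|\cD\dot\vp_\ep|^2\,\omega_\ep^n\,dt+\int_0^1\!\!\int_{X_t}c_\ep\,(\bar S-S_\ep)\,\omega_\ep^n\,dt=\cK_\ep'(1)-\cK_\ep'(0)=O(\ep).
\]
Using the density identity, the curvature term equals $\ep\int_0^1\!\int_{X_t}(\bar S-S_\ep)\,\omega^n\,dt$; rewriting $S_\ep=\tr_{\omega_\ep}\Ricci(\omega)-\Delta_{\omega_\ep}\log(\omega_\ep^n/\omega^n)$ and integrating by parts, I would show with the help of the uniform bounds that $\int_0^1\!\int_{X_t}S_\ep\,\omega^n\,dt$ is bounded. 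This produces the two global facts I want: the gradient energy $\int_0^1\!\int_{X_t}|\cD\dot\vp_\ep|^2\omega_\ep^n\,dt=O(\ep)$, and (pairing $c_\ep$ against $\omega_\ep^n$) the exact fiberwise mean $\int_{X_t}c_\ep\,\omega_\ep^n=\ep\int_X\omega^n$, so that $c_\ep$ has size $\ep$ \emph{on average}.

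The final step upgrades the averaged control of $c_\ep$ to the pointwise bound $c_\ep\le C_0\,\ep$ by a maximum-principle argument on the total space. The density $c_\ep$ satisfies a linear, second-order Jacobi-type equation along the Monge--Amp\`ere foliation of $\cG_\ep$, schematically $\Delta_{\cG_\ep}c_\ep\ge \lambda\,c_\ep-B\,\ep$, whose coefficients are controlled by the curvature of $X$ and by the gradient energy estimated above; combined with the fiberwise mean $\int_{X_t}c_\ep\,\omega_\ep^n=\ep\int_X\omega^n$ and the uniform regularity near the smooth endpoints $t=0,1$, a mean-value / Moser iteration then gives $\sup c_\ep=O(\ep)$. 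This is the main obstacle: both $S_\ep$ and $|\partial\dot\vp_\ep|^2_{\omega_\ep}$ blow up exactly where $\omega_\ep$ degenerates, so the coefficients of the differential inequality and the measure $\omega_\ep^n$ used for averaging are themselves degenerating, and the iteration must be closed using the global $O(\ep)$ bounds extracted from the $\ep$-affine hypothesis rather than any pointwise curvature bound.

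Granting $c_\ep\le C_0\,\ep$, the density identity gives $\omega_\ep^n/\omega^n\ge 1/C_0=:\kappa_0$ and hence $\omega_\ep\ge\kappa_1\omega$, uniformly in $\ep$ and $t$. To conclude I would pass to the limit: since $\ep$-affine implies affine, Theorem (\ref{thm-l2-001}) yields $\omega_\ep^n/\omega^n\to\omega_\vp^n/\omega^n$ in $L^2$ (and a.e.\ along a subsequence), while $\omega_\ep\rightharpoonup\omega_\vp$ weakly in $L^p$ fiberwise. As the set $\{\alpha:\alpha\ge\kappa_1\omega\}$ is convex and closed in $L^p$, the uniform lower bound is preserved under the weak limit (Mazur), giving $\cG|_{X_t}=\omega_\vp\ge\kappa_1\omega$ for a.e.\ $t\in[0,1]$, which is the desired uniform fiberwise non-degeneracy.
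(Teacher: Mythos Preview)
Your strategy diverges sharply from the paper's, and the central step does not go through. You aim to prove a \emph{uniform pointwise} bound $c_\ep\le C_0\ep$ (equivalently $\omega_\ep^n/\omega^n\ge\kappa_0$) on the $\ep$-geodesic itself, and then pass to the limit. But the upgrade from the exact mean $\int_{X_t}c_\ep\,\omega_\ep^n=\ep\int_X\omega^n$ to $\sup c_\ep=O(\ep)$ via ``maximum principle / Moser iteration'' is not justified: you state the needed differential inequality for $c_\ep$ only ``schematically,'' and you yourself note that its coefficients (through $S_\ep$ and $|\partial\dot\vp_\ep|^2_{\omega_\ep}$) blow up precisely on the set where $\omega_\ep$ degenerates. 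A Moser iteration cannot be closed when both the elliptic operator and the reference measure degenerate on an uncontrolled set; the $O(\ep)$ energy bound you extract from the hypothesis is an \emph{integral} quantity and does not by itself tame the pointwise coefficients. There is a second, earlier gap: to show the curvature term is $O(\ep)$ you need $\int_0^1\int_{X_t}S_\ep\,\omega^n\,dt$ bounded, but $S_\ep$ contains $\tr_{\omega_\ep}\Ricci(\omega)$, which the $\cC^{1,\bar 1}$ estimates bound only on the sublevel set $P_{\ep,A}$ where $\omega_\ep^n/\omega^n$ is already bounded below---exactly what you are trying to prove. The paper circumvents this circularity by working with the $A$-truncated functional $\cK_{\ep,A}$, which you do not invoke.

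The paper's argument is structurally different and avoids any pointwise control of $c_\ep$. It works directly with the limit volume element $\kf(\vp)=\omega_\vp^n/\omega^n$ and combines three ingredients: (i) the \emph{gap phenomenon} (affine $\cM$ forces either $\kf(\vp)>\kappa_0$ or $\kf(\vp)=0$ a.e.\ on each fiber); (ii) from the $\ep$-affine hypothesis and the computation of $dd^c\cK_{\ep,A}$, a \emph{partial} $W^{1,2}$-bound $\int_{P_{\ep_\l,A}\cap X_t}|\nabla\kf(\vp_{\ep_\l})|^2\,\omega^n\le C$ for a subsequence and a.e.\ $t$ (Lemma \ref{lem-com-002}); (iii) a measure-theoretic/connectedness proposition (Proposition \ref{prop-nd-001}) showing that a nonnegative $L^\infty$ function with the gap phenomenon, approximated in $L^2$ by smooth functions obeying such a partial gradient bound, cannot have a zero set of positive measure. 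The proposition is proved by induction on dimension, the base case resting on a one-dimensional distance estimate (Lemma \ref{lem-nd-001}) and the impossibility of disconnecting $[0,1]$ (Lemma \ref{lem-nd-0010}). The conclusion $\kf(\vp)>\kappa_0$ a.e.\ then gives $\cG|_{X_t}>\kappa_1\omega$ for a.e.\ $t$, with $\kappa_1$ determined by $\kappa_0$ and the uniform upper bound on $\omega_\vp$. No uniform-in-$\ep$ lower bound on $\omega_\ep^n$ is ever claimed.
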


There are three main \text{steps} of the proof for the above Theorem. 
\textbf{Step (1)} is to figure out the so called \emph{gap phenomenon} of the geodesic $\cG$, 
which is first observed in our previous work (\cite{LL}). 
\textbf{Step (2)} is to establish a kind of $W^{1,2}$-estimate for the volume element $\omega_{\ep}^n$,
which is provided from the $\ep$-affine condition. 
\textbf{Step (3)} (see Proposition (\ref{prop-nd-001})) is to prove 
that a non-negative function must have a positive lower bound 
if it has the gap phenomenon and satisfies a certain partial $W^{1,2}$-estimate.

As an application of our Theorem (\ref{thm-intr-001}) and (\ref{thm-intr-002}), 
we can further estimate (see Theorem (\ref{thm-app-001})) the limit of the complex hessian  
of $\cM(\vp_{\ep})$, if the Mabuchi functional $\cM(\vp)$ is $\ep$-affine. 
More precisely, we have 
\begin{equation}
\label{intr-003}
\limsup_{\ep\rightarrow 0} \cK''_{\ep}(t)  \geq 0,
\end{equation}
for almost everywhere $t\in [0,1]$. 

Another application is that we can recover the strict convexity result 
in (\cite{LL}), as stated before.   
Moreover, if the manifold $X$ satisfies $c_1(X) = 0$
or $c_1(X) <0$, then we can utilize Chen's argument (\cite{C00}) to conclude the strict convexity of the Mabuchi functional,
provided the $\ep$-affine condition (see Theorem (\ref{thm-app-003})). 

Therefore, we conjecture that the Mabuchi functional is strictly convex
along a $\cC^{1,\bar 1}$-geodesic, if it is $\ep$-affine.
In fact, the convergence (equation (\ref{intr-003})) further implies 
an $L^2$-estimate on $\dbar v_{\ep}$ for a sequence of smooth vector fields $v_{\ep}$ (see Theorem (\ref{thm-app-001})). 
Unfortunately, there is still some difficulties to conclude the holomorphicity of $v_{\infty}$ as the limit of this sequence $v_{\ep}$. 

Finally, it might be worthy to point out that 
a geodesic $\cG$ possibly possess more regularities than $\cC^{1,\bar 1}$,
when the Mabuchi functional is $\ep$-affine along it. 
Hopefully, we will see more examples about this fact, and 
the regularity problem will be considered in our following projects. 

\bigskip

\textbf{Acknowledgment: }
The author is very grateful to Prof. Chen and Prof. P\u aun  who introduced this problem,
and have given continuous encouragement. 
He also wants to thank Prof. Chengjian Yao, Dr. Jingchen Hu and Prof. Wei Sun for lots of useful discussion. 
Finally, he thanks the referee who gave many valuable suggestions to improve this paper.

\bigskip

\section{Preliminary}

Denote $\Gamma$ by the following strip domain in $\bC$
$$ \Gamma: = \{  \tau\in \bC; \ \ 0\leq \Re \tau \leq 1     \}, $$
and we write the complex coordinate $\t$ as  $t + \sqrt{-1}s$. 
Consider the product manifold $Y: = \Gamma\times X$, 
and then $Y$ is a complex K\"ahler manifold with boundary. 
Assume that $\pi$
is the holomorphic projection from the product space $Y$ to $X$. 
Let $\Phi$ be a quasi-plurisubharmonic function on $Y$ continuous up to the boundary. 
Denote $\cG$ by the following closed positive $(1,1)$ current 
\begin{equation}
\label{pre-0}
 \pi^*\omega + dd^c\Phi \geq 0
\end{equation}
on $Y$. 
We say that $\cG$ is a geodesic in the space of K\"ahler potential,
if the function $\Phi$ is independent of $s = \Im \tau$, and satisfies the following
 \emph{Homogeneous complex Monge-Amp\`ere} (HCMA) equation on $Y$
\begin{equation}
\label{pre-0000}
\cG^{n+1} = ( \pi^*\omega + dd^c\Phi )^{n+1} = 0.
\end{equation}
This wedge product performs in the sense of Bedford and Talyor (\cite{BT}).

The boundary value of $\Phi$ is required to be in the space $\cH_{[\omega]}$,
and we say that $\cG$ is a geodesic connecting two points $\varphi_0, \varphi_1\in \cH_{[\omega]}$
if 
$$\Phi|_{X\times \{0 \}} = \varphi_0; \ \ \ \Phi|_{X\times\{1\}} = \varphi_1.$$ 

It is proved by Chen (\cite{C00}) that such a geodesic always exists,
and is unique with fixed boundary value.
It also has the so called $\cC^{1,\bar 1}$-regularities on $Y$,
namely, writing $\cG$ locally as  
$$ g_{\t\bar\t} d\t\wedge d\bar\t + \sum_{\a,\b =1}^n ( g_{\t\bar\b} d\t\wedge d\bar z^{\b} + g_{\a\bar\t}dz^{\a}\wedge d\bar\t + g_{\a\bar\b} dz^{\a}\wedge d\bar z^{\b} ),$$
we have 
$$ || g_{\t\bar\t} ||_{L^{\infty}} +  \sum_{\a,\b =1}^n  (   ||g_{\t\bar\b}||_{L^{\infty}} +    ||g_{\a\bar\t}||_{L^{\infty}} +  ||g_{\a\bar\b}||_{L^{\infty}}    )  < +\infty. $$
In other words, there exist a uniform constant $C>0$ such that we have 
$$ 0 \leq \cG \leq C (\pi^* \omega + i d\t \wedge d\bar\t )$$
on $Y$.
Thanks to the Sobolev embedding theorem, this quasi-plurisubharmonic function $\Phi$ is of class $C^{1,\a}$ for any $\a\in (0,1)$. 

\begin{rem}
\label{rem-pre-001}
We emphasis that a geodesic potential $\Phi$ and an $\ep$-geodesic potential $\Phi_{\ep}$ (see equation (\ref{int-001}))
are always independent of the imaginary part of $\tau$. 
For this reason, we can think of that  $\Phi$ 
and $\Phi_{\ep}$ are periodic functions with period $2\pi$ in the direction $s$.  
Therefore, they are actually $S^1$-invariant 
functions defined on $\cR\times X$,
where $\cR: = [0,1]\times S^1$ is a cylinder \emph{(\cite{C00})}. 
In other words, the defining domain of $\cG$ and $\cG_{\ep}$ can be taken 
as a compact complex K\"ahler manifold with boundary. 

For the same reason, we abuse the complex variable $\tau$ and its real part $t$
from now on, and hope that this will be clear to readers from the context.

\end{rem}

\bigskip

\subsection{The Mabuchi functional }
On the space $\cH_{[\omega]}$, Mabuchi  (\cite{BM}) introduced the following energy functional 
$$\cM: = \underline R \cE - \cE^{Ric\omega} + H, $$
where the constant $\underline R$ is the average of the scalar curvature
$$ \underline R = \frac{n c_1(X)\cdot [\omega]^{n-1}}{[\omega]^n}. $$
The energy functional  $\cE$ is defined for any $\vp\in\cH_{[\omega]}$ as 
$$ \cE (\vp): = \frac{1}{n+1} \sum_{i=0}^n \int_X \varphi \omega^i \wedge \omega_{\vp}^{n-i}.$$
The twisted energy functional $\cE^{\a}$ (by a closed smooth $(1,1)$ form $\a$)
 is defined as
$$ \cE^{\a} (\vp): =  \sum_{i=0}^{n-1} \int_X \vp \omega^{i} \wedge \omega_{\vp}^{n-i-1}\wedge \a.$$
Finally, the entropy functional is 
$$ H(\vp): = \int_X \left( \log\frac{\omega_{\vp}^n}{\omega^n} \right) \omega_{\vp}^n. $$

Consider the pull back $\pi^*\omega$ of the K\"ahler form on $X$, and then it is a  smooth positive $(1,1)$-current on $Y$. 
Suppose $\Phi$ is a $\pi^*\omega$-plurisubharmonic function on $Y$ (see equation (\ref{pre-0})), which corresponds to a geodesic $\cG$.
Then its restriction $$\vp_{\t}: = \Phi|_{X_\t}$$ on the fiber $X_{\t}$ is actually a $\omega$-plurisubharmonic function and 
has the $\cC^{1,\bar 1}$-regularities. It is observed (\cite{BB}) that 
$\cM$ can be defined on such functions. 
Therefore, we can write the Mabuchi functional along the geodesic $\cG$ as 
$$  \cK(\t): = \cM (\vp_\t), \ \ \t\in\Gamma.$$

Next, we  introduce the following modified versions of the Mabuchi functional
due to Berman and Berndtsson (\cite{BB}) .
For any large constant $A >0$,
we define the \emph{A-truncated Mabuchi functional} along the geodesic $\cG$ as 
\begin{equation}
\label{pre-100}
 \cK^{\Psi_A}(\t):= \underline R \cE (\vp_\t) - \cE^{Ric\omega} (\vp_\t) + \int_{X} \log\left(  \max\left\{ \frac{\omega^n_{\vp_\t}}{\omega^n} ,  \frac{h_A}{\omega^n}     \right\}  \right) \omega_{\vp_\t}^n. 
\end{equation}
Here $h_A$ is a continuous metric on the relative canonical line bundle $K_{Y/ \Gamma}$, 
and it is constructed as follows. 

Let $\chi_0$ be a smooth metric on the line bundle $K_{X}$,
and $k_0$ be a positive number such that $dd^c \chi_0 + k_0\omega > 0$.
If set $\chi: = \pi^*\chi_0 - k_0 \Phi$, then we have 
$$ dd^c\chi = \pi^* \chi_0  + k_0 \pi^* \omega - k_0 (\pi^*\omega+ dd^c \Phi) \geq -k_0 \cG.  $$
Taking $h_A: = e^{\chi -A}$,
we obtain on $Y$
$$  dd^c \log h_A \geq - k_0 ( \pi^*\omega + dd^c \Phi), $$
for some positive integer $k_0$.

As explained  in  Remark (\ref{rem-pre-001}), 
The values of these energy functions $\cK, \cK^{\Psi_A}$
do not depend on $\Im\tau$. 
Hence they can be thought of defining 
on the unit interval $[0,1]$. 
Moreover, it is proved (see Theorem (3.4), \cite{BB}) that $\cK^{\Psi_A}$ is a convex function on $[0,1]$,
and it converges to $\cK$ as $A\rightarrow \infty$ by the dominated convergence theorem. 
Eventually, we conclude the following convexity result (\cite{BB}, \cite{CLP}). 

\begin{theorem}
\label{thm-pre-0000}
The Mabuchi functional $\cM$ is convex and continuous along a $\cC^{1,\bar 1}$-geodesic $\cG$.
\end{theorem}

\section{Energies on the $\ep$-geodesics}
Suppose we have two points $\vp_0, \vp_1\in \cH_{[\omega]}$.
For each $\ep>0$ small enough,
there exists a smooth $S^1$-invariant strictly $\pi^*\omega$-plurisubharmonic function $\Phi_{\ep}$ on $\Gamma\times X$ satisfying
\begin{equation}
\label{int-001} 
 \cG_{\ep}^{n+1} = ( \pi^*\omega + dd^c \Phi_{\ep})^{n+1} = \ep \sqrt{-1}dt\wedge d\bar t \wedge \omega^n,
 \end{equation}
with boundary conditions
$$\Phi_{\ep}(0, \cdot) = \vp_0 (\cdot);\ \ \  \Phi_{\ep}(1,\cdot) = \vp_1(\cdot).$$ 
Then we say that $\cG_{\ep}: =  \pi^*\omega + dd^c \Phi_{\ep} $ is the $\ep$-geodesic 
connecting with $\vp_0$ and $\vp_1$.
From the uniform ellipticity of equation (\ref{int-001}), this $\ep$-geodesic is uniquely determined, when its boundary values are fixed. 

It is proved in (\cite{C00}) that $\cG_{\ep}$
is uniformly bounded in the $\cC^{1,\bar 1}$-norm. 
Moreover, we have known that $\Phi_{\ep} \rightarrow \Phi $ in $C^{1,\a}$-norm for any $\a\in (0,1)$,
and also in weakly $W^{2,p}$-norm for all $1 < p < \infty$.

In general, we write the restriction of the geodesic potential on each fiber $X_t: = \{ t\}\times X$ as 
$$ \vp(t, \cdot): = \Phi|_{X_t}; \ \ \  \omega_{\vp}: = \cG|_{X_t} = \omega+ dd_{X}^c \vp. $$
Similarly, we have for the $\ep$-geodesic potential
$$ \vp_{\ep}(t, \cdot): =  \Phi_{\ep}|_{X_t}; \ \ \  \omega_{\ep}: = \cG_{\ep}|_{X_t} = \omega + dd_X^c \vp_{\ep}.$$
Define an operator $\rho$ on the space of all K\"ahler metrics on $Y$ as 
$$  \rho(g): = g_{t\bar t} - g^{\bar\b\a} g_{\a \bar t} g_{t\bar\b}. $$
Writing  $\rho_{\ep}: = \rho (\cG_{\ep})$,
a standard computation shows the following equation 
\begin{equation}
\label{int-002} 
 ( \pi^*\omega + dd^c \Phi_{\ep})^{n+1} = \rho_{\ep} \sqrt{-1}dt\wedge d\bar t \wedge \omega_{\ep}^n.
 \end{equation}
Hence the $\ep$-geodesic equation (\ref{int-001}) can be re-written as 
\begin{equation}
\label{int-003} 
\rho_{\ep}    =  \ep  \frac{\omega^n}{ \omega_{\ep}^n}.
 \end{equation}
 
 \begin{rem}
 \label{rem-int-002}
 It is a well known fact that the fiberwise volume element 
 $\omega^n_{\ep}/\omega^n$ converges to $\omega_{\vp}^n/\omega^n$
 weakly in $L^p$, for all $p>1$, possibly passing to a subsequence. 
 In fact, the $C^{1,\a}$ convergence of $\vp_{\ep}$ to $\vp$ implies 
 the convergence of the Monge-Amp\`ere measure $\omega_{\ep}^n$ to $\omega^n_{\vp}$ in the sense of currents.
 With the uniform upper bound of the volume elements, this further implies that $\omega^n_{\ep}$
 converges to $\omega_{\vp}^n$ in the weak sense of measures (Proposition (2.2), \cite{BT0}).   
 
On the other hand, we can infer from the uniform $L^{\infty}$-norm of the sequence $\omega_{\ep}^n/\omega^n$
that the following holds. There exists a non-negative $L^{\infty}$ function $f_{\infty}$ satisfying 
$$ \frac{\omega_{\ep}^n}{\omega^n} \rightarrow f_{\infty}, $$
in the weakly $L^p$-norm for all $p>1$, possibly after passing to subsequence. 
Therefore, the two measure $\omega_{\vp}^n$ and $f_{\infty}\omega^n$ 
must coincide with each other, and our claim follows. 
 \end{rem}

\subsection{Energy and entropy}
Denote $\cK_{\ep}: \Gamma \rightarrow \mathbb{R}$ by the restriction of the Mabuchi functional $\cM$ to the $\ep$-geodesic as 
$$ \cK_{\ep} (t): = \underline{R}\cE (\vp_{\ep}) - \cE^{Ric\omega} (\vp_{\ep}) + \int_X \log\frac{\omega^n_{\ep}}{\omega^n} \omega^n_{\ep}.$$
Then the $A$-truncated Mabuchi functional along the $\ep$-geodesic can also be introduced as 
$$ \cK_{\ep, A} (t): =  \underline{R}\cE (\vp_{\ep}) - \cE^{Ric\omega} (\vp_{\ep})  + \int_X \log\max \left\{   \frac{\omega_{\ep}^n}{\omega^n}, \frac{h_A}{\omega^n}      \right\} \omega_{\ep}^n,$$
where $  h_A : = h_{\ep, A} = e^{\chi_{\ep} -A} $ is a smooth volume  whose associated curvature is greater than $-C\cG_{\ep}$ for some fixed positive constant $C$.
More precisely, we construct this auxiliary element as before. 

Let $\chi_0$ be a smooth metric on the line bundle $K_{X}$,
and $k_0$ be a positive number such that $dd^c \chi_0 + k_0\omega > 0$.
Then we set $\chi_{\ep}: = \pi^*\chi_0 - k_0 \Phi_{\ep}$, and hence 
$$ dd^c\chi_{\ep} = \pi^* \chi_0  + k_0 \pi^* \omega - k_0 (\pi^*\omega+ dd^c \Phi_{\ep}) \geq -k_0 \cG_{\ep}.  $$
We note that  the sub-index $\ep$ in the notation $h_{A}$ is omitted, since 
$\chi_{\ep}$ is uniformly bounded in its $C^{1, \bar 1}$-norm, and converges uniformly to 
$\chi: = \pi^*\chi_0 - k_0 \Phi $
in $C^{1,\a}$-norm for any $\a\in(0,1)$. 
Therefore, this omission will be harmless for our later argument. 

This new energy function $\cK_{\ep, A}$ is continuous in $t$, since the metric $\omega_{\ep}$
and the volume form $h_A$ both are continuous on $Y$, and moreover the maximum 
$\max\{\omega^n_{\ep}, h_A \}$ is uniformly bounded below away from zero.

We can further compute the complex Hessian of this energy function in the sense of local currents,
and our almost convexity result reads as follows. 

\begin{theorem}[Chen-Li-P\u aun, \cite{CLP}]
\label{int-thm-001}
For each positive number $A$, there is a uniform constant $C_A > 0$ such that the function 
\begin{equation}
\label{int-0091}
 \widetilde{\cK}_{\ep, A}: = \cK_{\ep, A}(t) - \ep C_A t(1-t)
 \end{equation}
is convex and continuous on $[0,1]$ for each $\ep>0$ small. 
\end{theorem}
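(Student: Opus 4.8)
The plan is to read off a lower bound for the distributional complex Hessian of $\cK_{\ep,A}$ directly from the pushforward formula (\ref{int-005}) and then to cancel the resulting $O(\ep)$ defect by the explicit quadratic correction. Since $\cK_{\ep,A}$ is $S^1$-invariant, it may be regarded as a function of $t\in[0,1]$ alone, and convexity in $t$ is equivalent to the nonnegativity of $dd^c\cK_{\ep,A}$ as a $(1,1)$-current on $\Gamma$; continuity in $t$ having already been noted from the construction, the whole matter reduces to a one-sided bound on this current.

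First I would treat the three terms on the right of (\ref{int-005}) separately. The $\ep$-geodesic equation (\ref{int-001}) gives $\cG_{\ep}^{n+1} = \ep\sqrt{-1}dt\wedge d\bar t\wedge\omega^n$, so the fiber integral of the first term is exactly
\begin{equation*}
\frac{\underline{R}}{n+1}\int_{X_t}\cG_{\ep}^{n+1} = \frac{\underline{R}}{n+1}\,\ep\, V\,\sqrt{-1}dt\wedge d\bar t,
\end{equation*}
where $V:=\int_X\omega^n$; in particular this term is $O(\ep)$, with absolute value controlled by $|\underline{R}|V/(n+1)$ (and a favorable sign when $\underline{R}\geq 0$). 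For the remaining two terms I would invoke the globally valid estimate (\ref{int-010}), which bounds their combined integrand from below by $-c'_A\cG_{\ep}^{n+1}$; integrating over the fiber and using (\ref{int-001}) again yields
\begin{equation*}
-\int_{X_t}Ric(\omega)\wedge\cG_{\ep}^n + \int_{X_t}dd^c\Big(\max\{\dots\}\Big)\wedge\cG_{\ep}^n \ \geq\ -c'_A\,\ep\,V\,\sqrt{-1}dt\wedge d\bar t.
\end{equation*}

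Combining the two displays, I obtain $dd^c\cK_{\ep,A} \geq -B_A\,\ep\,\sqrt{-1}dt\wedge d\bar t$ as currents on $\Gamma$, where $B_A := (c'_A + |\underline{R}|/(n+1))V$ is uniform in $\ep$ and increasing in $A$ (since $c'_A$ is). Translating back to the real variable $t$ by $S^1$-invariance, this reads $\cK_{\ep,A}''(t)\geq -C_A'\ep$ in the distributional sense, with $C_A'$ differing from $B_A$ only by the fixed normalization of $d^c$. Since $\frac{d^2}{dt^2}[t(1-t)] = -2$, the function $\widetilde{\cK}_{\ep,A} = \cK_{\ep,A}(t) - \ep C_A t(1-t)$ satisfies $\widetilde{\cK}_{\ep,A}'' \geq -C_A'\ep + 2\ep C_A \geq 0$ as soon as $C_A$ is chosen with $2C_A\geq C_A'$; it is therefore convex, and continuity is inherited from that of $\cK_{\ep,A}$ together with the smoothness of the correction term.

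The genuinely delicate input is not this assembly but the pointwise inequality (\ref{int-010}), which is already in place before the statement: it rests on (\ref{int-007}) inside the sublevel set $P_{\ep,A}$, on the curvature bound (\ref{int-006}) coming from the construction of $h_A$, and on the Greene-Wu regularization needed to push the estimate across the non-smooth locus where the two arguments of the maximum agree. Granting (\ref{int-010}), the only point demanding care in the present argument is the observation that every lower-bound contribution is proportional to $\int_{X_t}\cG_{\ep}^{n+1}$ and hence carries the saving factor $\ep$ from the right-hand side of (\ref{int-001}); this is precisely what makes a correction of size $\ep$ sufficient, and what forces the specific $\ep$-scaling appearing in (\ref{int-0091}).
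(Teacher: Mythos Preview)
Your proposal is correct and follows exactly the route the paper indicates: the paper's own argument is simply the one-line ``Combining this estimate with equation (\ref{int-005}), we have the following result,'' and you have spelled out that combination---using (\ref{int-010}) to bound the last two terms of (\ref{int-005}) by $-c'_A\cG_\ep^{n+1}$, using (\ref{int-001}) to evaluate $\int_{X_t}\cG_\ep^{n+1}=\ep V\,\sqrt{-1}dt\wedge d\bar t$, and then absorbing the resulting $O(\ep)$ defect into the quadratic correction. There is no substantive difference from the paper's approach; your write-up just makes the final constant-tracking and the role of the $\ep$-geodesic equation explicit.
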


For the convenience of readers, 
we recall the proof of the above Theorem briefly.  
A similar idea of this proof will be used in Section (4) and (6). 
\begin{proof} [Sketch of the proof]

Suppose $v$ is a locally compact supported smooth test function on $\Gamma$, and then we have 

\begin{eqnarray}
\label{int-004}
\langle dd^c \cK_{\ep, A}, v \rangle &=& \underline{R}\int_{\Gamma\times X} v (\pi^*\omega + dd^c\Phi_{\ep})^{n+1}
\nonumber\\
&-& \int_{\Gamma\times X} v (\pi^*\omega + dd^c\Phi_{\ep})^{n}\wedge \pi^* Ric(\omega)
\nonumber\\
&+& \int_{\Gamma\times X} v  dd^c \left( \max \left\{   \log\frac{\omega_{\ep}^n}{\omega^n}, \log\frac{h_A}{\omega^n} \right\} \right) \wedge (\pi^*\omega + dd^c\Phi_{\ep})^{n}. 
\end{eqnarray}

Taking the fiberwise integral $\int_{X_t}$ as the push forward operator acting on the currents from $\Gamma\times X$ to $\Gamma$,
we obtain the following equation.

\begin{eqnarray}
\label{int-005}
 dd^c \cK_{\ep, A}(t) &=& \frac{\underline{R}}{n+1}\int_{X_t} \cG_{\ep}^{n+1}  - \int_{X_t}  Ric(\omega) \wedge \cG_{\ep}^n
\nonumber\\
&+& \int_{X_t}   dd^c \left( \max \left\{   \log\frac{\omega_{\ep}^n}{\omega^n}, \log\frac{h_A}{\omega^n} \right\} \right)  \wedge \cG_{\ep}^{n}. 
\end{eqnarray}
On the one hand, we have seen locally 
\begin{equation}
\label{int-006} 
\frac{1}{n+1}\left(\Delta_{\cG_{\ep}}  \log{h_A} \right) \cG_{\ep}^{n+1}  =  dd^c  \log{h_A} \wedge\cG_{\ep}^{n} \geq - k_0 \cG_{\ep}^{n+1}.
 \end{equation}
On the other hand,
we denote the function $f_{\ep}: \Gamma\times X \rightarrow \mathbb{R}$ by the equality
$$ \left. \frac{\omega^n_{\ep}}{\omega^n} \right|_{X_t} = e^{f_{\ep} (t, \cdot)},$$
and introduce the following open sub-level set 

\begin{equation}
\label{int-0061}
 P_{\ep, A}: = \left\{ (t,z)\in \Gamma\times X; \ \ \  f_{\ep}(t,z) >  \log \frac{h_{ A}}{\omega^n} (t,z)        \right\}. 
\end{equation}

It is proved in (\cite{CLP}) that there exists a positive number $c_A$ such that the following inequality is satisfied in $P_{\ep, A}$

\begin{equation}
\label{int-007} 
\left( dd^c f_{\ep} - Ric(\omega)  \right) \wedge \cG_{\ep}^n \geq -c_A \cG_{\ep}^{n+1}.
 \end{equation}
We emphasis that $c_A$ only depends on the constant $A$, the background metric $\omega$ and the uniform upper bound of $\omega_{\ep}$, 
and it can be assumed to be increasing in $A$. 
Hence we have the following estimate locally in $P_{\ep, A+1}$

\begin{equation}
\label{int-008} 
\left( \Delta_{\cG_{\ep}} \log\omega_{\ep}^n \right) \cG_{\ep}^{n+1} \geq -c_{A+1} \cG_{\ep}^{n+1}.
 \end{equation}

Now we can utilize a theorem by Greene-Wu (Lemma (5.2), \cite{CLP}), 
and establish the following inequality in an open neighbourhood of each point on $\Gamma\times X$. 
\begin{equation}
\label{int-009} 
\left( \Delta_{\cG_{\ep}} \max\{ \log\omega_{\ep}^n, \log h_A \} \right) \cG_{\ep}^{n+1} \geq -c'_{A} \cG_{\ep}^{n+1},
 \end{equation}
where $c'_A: = \max\{ c_{A+1}, (n+1)k_0 \}$. 
Thus we infer the following inequality holds 
\begin{equation}
\label{int-010} 
\left( dd^c \max\left\{ \log\frac{\omega_{\ep}^n}{\omega^n},  \log\frac{h_A}{\omega^n}        \right\} - Ric \omega \right) \wedge \cG_{\ep}^n \geq - c'_A \cG_{\ep}^{n+1},
 \end{equation}
globally on $\Gamma\times X$.
Combing this estimate with equation (\ref{int-005}), our result follows. 

\end{proof}

One may expect that $\cK_{\ep, A}$ converges to the energy $\cK$. 
If so, then we can directly conclude the convexity of $\cM$ by  Theorem (\ref{int-thm-001}).  

The convergence of the energy parts of $\cM$, i.e. $\cE(\vp_{\ep}) \rightarrow \cE(\vp)$ and $\cE^{\a}(\vp_{\ep}) \rightarrow \cE^{\a}(\vp)$
as $\ep\rightarrow 0$, is indeed true (\cite{BB}, \cite{LL}). 
However, there is a rather severe difficulty: 
the convergence of the fiber-wise volume element 
$$ \frac{\omega_{\ep}^n}{\omega^n} \rightharpoonup \frac{\omega_{\vp}^n}{\omega^n} $$
is only known to be in the weakly $L^p$ sense for any $1<p< \infty$. 
Unfortunately, this is not enough to conclude the convergence of the entropy functional $H(\vp_{\ep})$. 

Nevertheless, we have the following estimate on the entropy functionals. 
Recall that the entropy functional along a geodesic $\cG$ is defined as 
$$  H(\vp): = \int_X \kf(\vp) \log \kf(\vp) \cdot \omega^n, $$
where $\kf(\vp): = \omega_{\vp}^n / \omega^n$,
and its truncated version is
$$  H_{A}(\vp): =  \int_X \kf (\vp) \log \kf_{A} (\vp) \cdot \omega^n, $$
where 
$$ \kf_A (\vp): = \max \left\{   \frac{\omega_{\vp}^n}{\omega^n}, \frac{h_A}{\omega^n}     \right\}. $$ 

As before, we omit the sub-index $\ep$ in the definition of $\kf_A$ as in $h_A$'s. 
Then the energy function $\cK_{\ep, A}$ can be re-written as 
\begin{equation}
\label{int-012}
\cK_{\ep, A} (t) = \underline{R}\cE (\vp_{\ep}) - \cE^{Ric\omega} (\vp_{\ep}) +  H_A(\vp_{\ep}). 
\end{equation}

Finally we state the following lower semi-continuity type property (Lemma (4.8), \cite{CLP}) for the truncated entropy functionals. 
\begin{lemma}
\label{lem-int-001}
There exists a function $\eta: \mathbb{R}^+ \rightarrow \mathbb{R}^+$ 
with $\eta(x) \rightarrow 0$ as $x\rightarrow +\infty$ such that we have 
\begin{equation}
\label{int-013}
 \liminf_{\ep\rightarrow 0}  H_A (\vp_{\ep})   \geq  H_A (\vp) - \eta(A),
 \end{equation}
for all $A$ large enough. 
\end{lemma}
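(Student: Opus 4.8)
The plan is to exploit the \emph{convexity} of the truncated-entropy integrand together with the weak $L^p$ convergence $\omega_{\ep}^n/\omega^n \rightharpoonup \omega_{\vp}^n/\omega^n$. For a constant $c>0$ set $\Theta_c(t) := t\max\{\log t,\log c\}$ on $t>0$. Since $\Theta_c(t)=t\log t$ for $t\ge c$ and $\Theta_c(t)=t\log c$ for $t\le c$, the two branches agree at $t=c$ while the left and right slopes there are $\log c$ and $\log c+1$; hence $\Theta_c$ is convex, continuous, and bounded below by $-1/e$. Writing $c_{\ep}:=h_{\ep,A}/\omega^n=e^{\chi_{\ep}-A}/\omega^n$ and $t=\omega_{\ep}^n/\omega^n$, the integrand of $H_A(\vp_{\ep})$ is exactly $\Theta_{c_\ep}(\omega_{\ep}^n/\omega^n)$, so $H_A(\vp_{\ep})=\int_X \Theta_{c_\ep}(\omega_{\ep}^n/\omega^n)\,\omega^n$.

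First I would pass from the $\ep$-dependent truncation $c_{\ep}$ to the limiting one $c:=h_A/\omega^n=e^{\chi-A}/\omega^n$. Because $t\mapsto\max\{\log t,\cdot\}$ is $1$-Lipschitz in its second argument, $|\Theta_{c_\ep}(t)-\Theta_c(t)|\le t\,|\log c_{\ep}-\log c|\le t\,\|\chi_{\ep}-\chi\|_{L^\infty}$. Integrating, and using that the total mass $\int_X\omega_{\ep}^n=\int_X\omega^n$ is a fixed cohomological constant, the replacement costs at most $\|\chi_{\ep}-\chi\|_{L^\infty}\cdot\Vol\to 0$ as $\ep\to 0$, by the uniform convergence $\chi_{\ep}\to\chi$ recorded in the construction of $h_A$. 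This reduces the problem to the lower semicontinuity of the single convex functional $u\mapsto\int_X\Theta_c(u)\,\omega^n$ along $u_{\ep}:=\omega_{\ep}^n/\omega^n\rightharpoonup\omega_{\vp}^n/\omega^n=:u$.

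The heart of the argument is the weak lower semicontinuity of this convex integral. Since $\Theta_c$ is a convex Carath\'eodory integrand with the integrable affine minorant $-1/e$, the standard lower semicontinuity theorem for convex integral functionals gives $\liminf_{\ep}\int_X\Theta_c(u_{\ep})\,\omega^n\ge\int_X\Theta_c(u)\,\omega^n$, and the right-hand side is precisely $H_A(\vp)$. Concretely I would realize this through the Legendre duality $\Theta_c(t)=\sup_s\big(st-\Theta_c^*(s)\big)$: for a fixed measurable test function $s$ one has $H_A(\vp_{\ep})\ge\int_X(s\,u_{\ep}-\Theta_c^*(s))\,\omega^n$, the linear term passes to the limit by weak convergence, and optimizing over $s$ recovers $\int_X\Theta_c(u)\,\omega^n$. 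What makes the optimization legitimate is that on the region $\{u>c\}$ the optimal dual variable equals $\log u+1\ge\log c+1$, bounded below by $-A-O(1)$, while off that region it equals the value $\log c$; since $u$ is uniformly bounded above (from $\omega_{\ep}\le C\omega$), one may take $s$ bounded, hence an admissible tester against the $L^\infty$-bounded densities $u_{\ep}$.

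The main obstacle is precisely this passage to the weak limit through the super-linear, non weakly continuous entropy density $t\log t$: weak $L^p$ convergence does not commute with it, and only convexity rescues lower semicontinuity. The truncation at level $h_A$ is exactly what keeps the optimal dual variable bounded so that the weak convergence can be used. The convexity argument in fact yields the clean bound $\liminf_{\ep}H_A(\vp_{\ep})\ge H_A(\vp)$; the error $\eta(A)$ permitted by the statement then comfortably absorbs both the $O(\|\chi_{\ep}-\chi\|_{L^\infty})$ cost of the truncation replacement and any quantitative slack from a more hands-on estimate of the degenerate region $\{u\le c\}$, and it tends to $0$ as $A\to\infty$ because the truncation level $c=e^{\chi-A}/\omega^n$ does. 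Assembling the two steps yields $\liminf_{\ep\to 0}H_A(\vp_{\ep})\ge H_A(\vp)-\eta(A)$ for all $A$ large, as claimed.
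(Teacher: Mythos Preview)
The paper does not supply a proof of this lemma; it simply quotes the statement from \cite{CLP}, Lemma~4.8. So there is no in-paper argument to compare your proposal against.

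That said, your approach is correct and in fact slightly sharper than the stated inequality. The key observation that $\Theta_c(t)=t\max\{\log t,\log c\}$ is convex in $t$ for each fixed $c$, combined with the fact that the pointwise truncation level $c(x)=e^{\chi(x)-A}/\omega^n(x)$ is continuous and bounded below on the compact fiber, reduces everything to weak lower semicontinuity of a convex Carath\'eodory integral functional. Your Legendre-duality implementation is legitimate precisely because the optimal dual variable $s^*(x)\in\partial\Theta_{c(x)}(u(x))$ is bounded: on $\{u>c\}$ it equals $\log u+1\in[\log(\inf c)+1,\log\|u\|_{L^\infty}+1]$, and on $\{u\le c\}$ it equals $\log c$, so $s^*\in L^\infty$ and pairs against the weakly convergent $u_\ep$. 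The replacement error $O(\|\chi_\ep-\chi\|_{L^\infty})$ vanishes under $\liminf_{\ep\to 0}$, so your argument actually yields $\liminf_{\ep\to 0}H_A(\vp_\ep)\ge H_A(\vp)$ with no $\eta(A)$ needed. The $\eta(A)$ in the statement presumably reflects a more direct estimate in \cite{CLP} that splits the fiber by level sets of $\kf(\vp_\ep)$ and bounds the small-volume region crudely; your convexity route bypasses that decomposition entirely.
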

We emphasis that $\eta(A)$ is independent of $\ep$ and $t$.

\subsection{The affine energy}
From now on, we assume that the Mabuchi functional $\cM$ is affine along $\cG$, namely, 
the energy function $\cK(t)$ is linear on $[0,1]$. 
Denote $\cK_{A}$ by the following limit for each $t\in [0,1]$ and $A$ large 
$$ \cK_A(t): = \limsup_{\ep\rightarrow 0} \cK_{\ep, A}(t) = \limsup_{\ep\rightarrow 0} \widetilde{\cK}_{\ep, A} (t),$$
where $\widetilde{\cK}_{\ep, A}$ is defined in equation (\ref{int-0091}).
Then $\cK_A$ is a convex function on the unit interval $[0,1]$,
since the $\limsup$ of a sequence of convex functions which are locally bounded above is still convex. 

Eventually, we will see that this convex function $\cK_{A}$ obtained from taking the limit of $\cK_{\ep, A}$
is exactly equal to $\cK^{\Psi_A}$ (see Corollary (\ref{cor-ae-001})).  
First, we note that $\cK_A$ is actually a decreasing sequence in $A$. 
This is because  $\cK_{\ep, A}$ is a decreasing sequence in $A$ for each $\ep$ fixed.
In fact, we have
$$H_{A'}(\vp_{\ep}) \leq H_A(\vp_\ep), $$ 
 for each $t\in [0,1]$ and  any $A' \geq A$, since $\kf_{A'}(\vp_\ep) \leq \kf_A (\vp_\ep)$ and $\kf(\vp) \geq 0$ in this case. 
Bearing this in mind, we  conclude the following result. 

\begin{theorem}
\label{thm-ae-001}
Suppose the Mabuchi functional $\cM$ is affine along a geodesic $\cG$. 
Then there is a positive number $A_0$ such that we have 
$$ \cK_A (t) = \cK (t), $$
for all $t\in[0,1]$ and $A \geq A_0$. 
\end{theorem}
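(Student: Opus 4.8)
The plan is to trap the convex function $\cK_A$ between $\cK$ from above and $\cK-\eta(A)$ from below, and then use the monotonicity of $\cK_A$ in $A$ to upgrade this asymptotic squeeze into an exact identity. All the ingredients are available: $\cK_A$ is convex in $t$ (it is the $\limsup$ of the convex functions $\wt{\cK}_{\ep,A}$ of Theorem \ref{int-thm-001}), it is decreasing in $A$, the energy parts $\underline R\cE(\vp_\ep)-\cE^{Ric\omega}(\vp_\ep)$ converge, and the truncated entropy obeys the lower semicontinuity of Lemma \ref{lem-int-001}.

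First I would pin down the endpoints. Since the $\ep$-geodesic has fixed boundary $\vp_\ep(0,\cdot)=\vp_0$ and $\vp_\ep(1,\cdot)=\vp_1$, the fiber metrics $\omega_\ep|_{X_0}=\omega_{\vp_0}$ and $\omega_\ep|_{X_1}=\omega_{\vp_1}$ are independent of $\ep$, and so is $\chi_\ep$ on these fibers; hence $\cK_{\ep,A}(0)$ and $\cK_{\ep,A}(1)$ do not depend on $\ep$ and coincide with the corresponding values of $\cK^{\Psi_A}$. Because $\vp_0,\vp_1\in\cH$ are smooth, the ratios $\omega_{\vp_0}^n/\omega^n$ and $\omega_{\vp_1}^n/\omega^n$ are bounded below by a positive constant, whereas $h_A/\omega^n=e^{\chi-A}/\omega^n\to 0$ uniformly as $A\to\infty$. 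Thus there is an $A_0$ so that for $A\geq A_0$ the truncation is inactive on both boundary fibers, giving $\cK_A(0)=\cK(0)$ and $\cK_A(1)=\cK(1)$. With the endpoints matched, the upper bound is immediate: $\cK_A$ is convex and agrees with the affine function $\cK$ at $t=0,1$, so $\cK_A(t)\leq (1-t)\cK(0)+t\cK(1)=\cK(t)$ on $[0,1]$.

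For the lower bound I would split off the convergent energy terms and apply Lemma \ref{lem-int-001} to the entropy, obtaining
\[
\cK_A(t)=\underline R\,\cE(\vp)-\cE^{Ric\omega}(\vp)+\limsup_{\ep\to 0}H_A(\vp_\ep)\;\geq\;\cK^{\Psi_A}(t)-\eta(A)\;\geq\;\cK(t)-\eta(A),
\]
where the last inequality uses $H_A(\vp)\geq H(\vp)$ (since $\kf_A\geq\kf\geq 0$), i.e. $\cK^{\Psi_A}\geq\cK$.

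Finally I would combine the two bounds with monotonicity in $A$. For any fixed $A'\geq A_0$ and every $A\geq A'$ the decreasing property gives $\cK_{A'}\geq\cK_A\geq\cK-\eta(A)$; letting $A\to\infty$ and using $\eta(A)\to 0$ forces $\cK_{A'}\geq\cK$, which together with the upper bound $\cK_{A'}\leq\cK$ yields $\cK_{A'}=\cK$. The main obstacle — and the only place where genuine analytic content enters — is the lower bound: the weak $L^p$ convergence of the volume ratios is too weak to pass to the limit in the entropy directly, and it is precisely the lower semicontinuity of the truncated entropy (Lemma \ref{lem-int-001}), with its controllable error $\eta(A)$, that rescues the argument. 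The monotonicity in $A$ then does the work of turning this $\eta(A)$-approximate equality into an exact one.
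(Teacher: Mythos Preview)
Your proof is correct and follows essentially the same route as the paper's: match the boundary values for $A$ large (truncation inactive on the smooth boundary fibers), use convexity of $\cK_A$ to get $\cK_A\leq\cK$, use Lemma~\ref{lem-int-001} together with $H_A(\vp)\geq H(\vp)$ to get $\cK_A\geq\cK-\eta(A)$, and then invoke the monotonicity of $\cK_A$ in $A$ to eliminate the $\eta(A)$ error. The only cosmetic differences are that the paper normalizes $\cK\equiv 0$ and packages the last step via the auxiliary function $\widetilde{\cK}:=\limsup_{A}\cK_A$, whereas you argue directly from $\cK_{A'}\geq\cK_A\geq\cK-\eta(A)$ and let $A\to\infty$; your version is marginally more streamlined but equivalent.
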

\begin{proof}
Up to a linear function on $[0,1]$, we can assume that the Mabuchi functional 
$\cM$ is identically zero along $\cG$, namely we have on $[0,1]$
$$\cK(t)\equiv 0. $$ 
The first observation is that we have 
\begin{equation}
\label{ae-000}
 \cK_A(0) = 0; \ \ \  \cK_A (1) = 0, 
 \end{equation}
since the $\ep$-geodesic potential $\Phi_{\ep}$ coincides with the geodesic potential $\Phi$ for each $\ep$ on the boundary of $\cR \times X$.
Then it is easy to see that $\cK_{\ep, A} (0) = \cK_{\ep, A} (1) = 0$ for all $A$ large enough. 

As a convex function on $[0,1]$, $\cK_A$ is upper semi-continuous near the boundaries, and then we have 
$$ \limsup_{t\rightarrow 0, 1} \cK_A (t) \leq 0. $$
Thanks to the convexity again, $\cK_A$ must be below the line segment joining its two boundaries. 
Therefore, it is non-positive under our assumption and we have 
\begin{equation}
\label{ae-001} 
\cK_A(t) \leq 0,
 \end{equation}
for all $t\in[0,1]$. 
On the other hand ,
we have for each $t\in (0,1)$
\begin{equation}
\label{ae-002}
\cK_A (t) \geq \cK(t) - \eta(A), 
\end{equation}
This directly follows from Lemma (\ref{lem-int-001}), where we have proved  
$$ \limsup_{\ep\rightarrow 0} H_A (\vp_{\ep}) \geq H_A (\vp) - \eta(A).$$

Next define a new function on $[0,1]$ as 
$$ \widetilde{\cK}(t): = \limsup_{A \rightarrow +\infty} \cK_A (t). $$
For the same reason, $\widetilde{\cK}$ is a convex function which verifies the boundary condition 
$\widetilde{\cK}(0)= \widetilde{\cK}(1) = 0$ by equation (\ref{ae-000}). 
Moreover, inequality (\ref{ae-002}) implies that 
$\widetilde{\cK}(t) \geq 0$ for each $t\in (0,1) $,
since $\eta(A)\rightarrow 0$ as $A\rightarrow +\infty$ by  Lemma (\ref{lem-int-001}). 
Therefore, we conclude that $\widetilde{\cK}$ must be identically equal to zero for all $t\in [0,1]$.

However, as mentioned before, $\cK_A$ is actually a decreasing sequence in $A$, 
namely, we have $\cK_A(t) \searrow \widetilde{\cK}(t)$ for each $t\in [0,1]$.
Therefore, the following inequality follows 
$$\cK_A(t) \geq 0, $$
for each $t\in [0,1]$.
 Combining this with equation (\ref{ae-001}), we have for all $A$ large enough 
$$\cK_A(t) \equiv 0 $$
on $[0,1]$,  and our result follows. 

\end{proof}

In other words, the limit of the $A$-truncated Mabuchi functional on the $\ep$-geodesics 
will coincide with $\cM$ for all $A$ large enough, provided the linearity of the Mabuchi functional along $\cG$. 

\subsection{Gap phenomenon }

When the Mabuchi functional is affine along $\cG$, we can prove (Lemma (3.1), \cite{LL}) that the $A$-truncated Mabuchi functional $\cK^{\Psi_A}$ (equation (\ref{pre-100}))
also coincide with $\cK$ for all $A$ large. 
This implies  
\begin{equation}
\label{ae-0025}
H_A(\vp) = H(\vp),
\end{equation}
on each fiber $X_t$, for all $A$ large enough. 
Furthermore, we can infer the so called \emph{gap phenomenon} (Proposition (3.2), \cite{LL})
for the fiber-wise volume element of the geodesic $\cG$ from the above equation.
Denote $P$ by the following measurable subset
$$ P: =  \left\{  (t,z)\in \Gamma\times X;\ \ \  \kf (\vp) > 0      \right\}, $$
and $P_A$ by the following sub-level set
$$ P_A:= \left\{  (t,z)\in \Gamma\times X;\ \ \  \kf (\vp) > \frac{e^{\chi-A}}{\omega^n}        \right\}. $$ 
For each $t\in[0,1]$, we introduce 
their fiber-wise restrictions as 
$$ P_t: =       P \bigcap X_t; \ \ \      P_{A,t}: = P_A \bigcap X_t. $$
The gap phenomenon says that 
there is a large constant $A_0 $ such that for all $A\geq A_0$, and each $t\in[0,1]$, 
the following equality holds 
\begin{equation}
\label{ae-0055}
X_t =  P_{A,t} \bigcup P_t^c
\end{equation}
up to a set of measure zero. 
In other words,
there exists a uniform constant $\k_0 > 0$ such that either we have 
on each fiber $X_t$
\begin{equation}
\label{ae-006}
\kf(\vp) > \k_0 \ \ \emph{or}\  \ \ \ \kf (\vp)= 0
\end{equation}
almost everywhere. 
We emphasis that $\k_0$ does not depend on $t$ or $x$.

\bigskip

Thanks to this gap phenomenon, we can prove that the limit of the energy function $\cK_{\ep, A}$ 
exists as $\ep\rightarrow 0$ on each fiber $X_t$ , and it actually coincides with $\cK$
for all $A$ large. 

\begin{corollary}
\label{cor-ae-001}
Suppose the Mabuchi functional $\cM$ is affine along a geodesic $\cG$.
Then for all $A$ large enough, the following limit exists for each $t\in [0,1]$ and satisfies 
\begin{equation}
\label{ae-0026}
 \lim_{\ep\rightarrow 0} \cK_{\ep, A} (t) = \cK^{\Psi_A}(t) =\cK (t).
 \end{equation}
\end{corollary}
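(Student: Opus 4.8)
The plan is to assemble Corollary \ref{cor-ae-001} from the pieces already in place, reading off the chain of equalities in (\ref{ae-0026}) essentially as a bookkeeping exercise once we identify what each limit is. The central equality $\cK_A(t) = \cK(t)$ for all large $A$ is exactly the content of Theorem \ref{thm-ae-001}, so the real work is to upgrade the $\limsup$ defining $\cK_A$ to an honest limit, and to insert the intermediate term $\cK^{\Psi_A}(t)$.

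First I would recall that by definition $\cK_A(t) = \limsup_{\ep\to 0}\cK_{\ep,A}(t)$, and that Theorem \ref{thm-ae-001} gives a constant $A_0$ with $\cK_A(t) = \cK(t)$ for all $t\in[0,1]$ and all $A\geq A_0$. To replace $\limsup$ by $\lim$, I would produce a matching lower bound for $\liminf_{\ep\to 0}\cK_{\ep,A}(t)$. The energy parts converge genuinely, i.e. $\cE(\vp_\ep)\to\cE(\vp)$ and $\cE^{Ric\omega}(\vp_\ep)\to\cE^{Ric\omega}(\vp)$, as recorded in the discussion after Theorem \ref{int-thm-001}, so by (\ref{int-012}) the only delicate term is the truncated entropy $H_A(\vp_\ep)$. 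For that I would invoke Lemma \ref{lem-int-001}, which yields $\liminf_{\ep\to0}H_A(\vp_\ep)\geq H_A(\vp)-\eta(A)$; combined with the $\limsup$ side this pins $\liminf$ and $\limsup$ to the same value once the $\eta(A)$ defect is shown to be absorbed. Concretely, in the proof of Theorem \ref{thm-ae-001} we already established the two-sided bound $\cK(t)\le\cK_A(t)\le\cK(t)$ (the inequalities (\ref{ae-001}) and (\ref{ae-002}) together with the decreasing-in-$A$ monotonicity), which forces the $\limsup$ and $\liminf$ of the sequence $\cK_{\ep,A}(t)$ to coincide, so the limit $\lim_{\ep\to0}\cK_{\ep,A}(t)$ exists and equals $\cK(t)$.

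Next I would account for the middle term $\cK^{\Psi_A}(t)$. The paragraph opening the \emph{Gap phenomenon} subsection already supplies the identity (\ref{ae-0025}), namely $H_A(\vp) = H(\vp)$ for all $A$ large, as a consequence of the result from \cite{LL} that $\cK^{\Psi_A}$ coincides with $\cK$ under the affine assumption. Since $\cK^{\Psi_A}(t) = \underline{R}\cE(\vp_t) - \cE^{Ric\omega}(\vp_t) + H_A(\vp)$ and $\cK(t) = \underline{R}\cE(\vp_t) - \cE^{Ric\omega}(\vp_t) + H(\vp)$, equation (\ref{ae-0025}) immediately gives $\cK^{\Psi_A}(t) = \cK(t)$. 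Threading this into the previous paragraph produces the full string $\lim_{\ep\to0}\cK_{\ep,A}(t) = \cK^{\Psi_A}(t) = \cK(t)$, which is the assertion.

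The step I expect to be the main obstacle is the passage from $\limsup$ to a genuine limit, because a priori the sequence $\cK_{\ep,A}(t)$ need only be bounded and convex in $t$, not monotone or convergent in $\ep$. The leverage comes entirely from the affine hypothesis: it collapses the gap between the upper bound (from convexity, below the boundary line segment) and the lower bound (from the semicontinuity Lemma \ref{lem-int-001}), so that no room is left for oscillation. I would therefore present the argument as deducing $\liminf = \limsup$ from the sandwich already proved, rather than attempting a direct convergence estimate on the entropy, since the latter is precisely the convergence that fails in the general (non-affine) case flagged in the introduction.
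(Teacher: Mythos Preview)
Your overall strategy matches the paper's: use Theorem \ref{thm-ae-001} for the $\limsup$, Lemma \ref{lem-int-001} for a lower bound on the $\liminf$, and the monotonicity of $H_A(\vp_\ep)$ in $A$ to close the gap; then read off $\cK^{\Psi_A}=\cK$ from (\ref{ae-0025}). However, one step in your write-up is a non-sequitur. You say that the two-sided bound $\cK(t)\le\cK_A(t)\le\cK(t)$ from the proof of Theorem \ref{thm-ae-001} ``forces the $\limsup$ and $\liminf$ of the sequence $\cK_{\ep,A}(t)$ to coincide.'' But $\cK_A(t)$ \emph{is} the $\limsup$; pinning it down says nothing about the $\liminf$. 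From Lemma \ref{lem-int-001} you only have $\liminf_{\ep}\cK_{\ep,A}(t)\ge \cK(t)-\eta(A)$, and for fixed $A$ the defect $\eta(A)$ does not vanish.

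The paper absorbs $\eta(A)$ by applying the monotonicity-in-$A$ to the $\liminf$ itself (equivalently to $\widetilde H_A(t):=\liminf_{\ep}H_A(\vp_\ep)$), not to $\cK_A$. Since $H_{A'}(\vp_\ep)\le H_A(\vp_\ep)$ for $A'\ge A$, the quantity $\widetilde H_A(t)$ is decreasing in $A$, hence for any $A'\ge A$ one has $\widetilde H_A(t)\ge \widetilde H_{A'}(t)\ge H(\vp)-\eta(A')$; letting $A'\to\infty$ gives $\widetilde H_A(t)\ge H(\vp)$ with no defect. Combined with $\limsup_{\ep}H_A(\vp_\ep)=H(\vp)$ from Theorem \ref{thm-ae-001} and (\ref{ae-0025}), this yields $\lim_{\ep}H_A(\vp_\ep)=H(\vp)=H_A(\vp)$, and the rest of your argument goes through. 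So the fix is simply to redirect the monotonicity argument from $\cK_A$ to $\liminf_{\ep}\cK_{\ep,A}$.
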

\begin{proof}
For each $t\in [0,1]$ fixed, we define 
$$ \widetilde{H}_A (t)  : = \liminf_{\ep\rightarrow 0} H_A(\vp_\ep). $$
Thanks to Lemma (\ref{lem-int-001}), we have 
\begin{equation}
\label{ae-003}
\lim_{A\rightarrow +\infty} \widetilde{H}_A(t) \geq H(\vp),
\end{equation}
by equation (\ref{ae-0025}). 
However, $ \widetilde{H}_A$ is actually a decreasing sequence in $A$.
This follows from its construction and the fact that $H_{A}(\vp_{\ep})$ is decreasing for each $\ep$ fixed. 
Hence, we conclude 
\begin{equation}
\label{ae-004}
 \widetilde{H}_A(t) \geq H(\vp).
\end{equation}
On the other hand, 
Theorem  (\ref{thm-ae-001}) implies for all $A$ large 
\begin{equation}
\label{ae-005}
\limsup_{\ep\rightarrow 0} H_A(\vp_\ep) =  H(\vp) = H_A(\vp).
\end{equation}
Combined the two inequalities above together,  we have 
\begin{equation}
\label{ae-005}
  H(\vp) = \limsup_{\ep\rightarrow 0} H_A(\vp_\ep) \geq  \liminf_{\ep\rightarrow 0} H_A(\vp_\ep) \geq  H (\vp),
\end{equation}
at each $t\in[0,1]$, and then our result  follows.

\end{proof}

\subsection{$L^2$-Convergence}

Recall that the $\ep$-geodesic potential $\Phi_{\ep}$ converges to the geodesic potential $\Phi$
in the weak $C^{1,\bar 1}$-norm. 
Fixing a fiber $X_\t$, 
we can pick up a convergent subsequence of the volume elements on this fiber 
\bigskip
\begin{equation}
f_\l (x) : = \kf(\vp_{\ep_{\l}})(x); \ \ \   f(x): = \kf(\vp)(x),
\end{equation}
satisfying 
$$ f_\l \rightarrow f $$
weakly in $L^p$ for all $p\geq 1$ on $X_\t$.
It is interesting to know whether we have strong convergence in $L^p$ for some $p>1$ for this sequence or not, 
and it turns out that this is indeed the case if the Mabuchi functional is affine.

\begin{theorem}
\label{thm-l2-001}
Suppose the Mabuchi functional $\cM$ is affine along a geodesic $\cG$. 
Then the fiber-wise volume element of the $\ep$-geodesic converges to the 
volume element of the geodesic in the strong $L^2$ sense as $\ep\rightarrow 0$,
possibly after passing to a subsequence. 
More precisely, we have 
$$ \lim_{\l\rightarrow +\infty} || f_\l - f ||_{L^2} = 0,  $$
on each fiber $X_\t$.
\end{theorem}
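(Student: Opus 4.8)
The plan is to upgrade the weak convergence $f_\l\rightharpoonup f$ to strong $L^2$-convergence by combining the uniform $\cC^{1,\bar 1}$-bound with the gap phenomenon. First I would note that the uniform estimate $0\le\cG_\ep\le C(\pi^*\omega+\sqrt{-1}\,dt\wedge d\bar t)$ restricts on each fiber to $0<\omega_\ep\le C\omega$, so the densities are uniformly bounded:
\begin{equation}
0\le f_\l\le M:=C^n,
\end{equation}
with $M$ independent of $\l$. Using the gap phenomenon (equation (\ref{ae-006})) I would split $X_\t=P_\t\cup P_\t^c$ up to a null set, where $f=\kf(\vp)\ge\k_0>0$ on $P_\t$ and $f=0$ on $P_\t^c$, and estimate $\int(f_\l-f)^2\,\omega^n$ on the two pieces separately.

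On the degenerate piece the argument is elementary. Since $f=0$ on $P_\t^c$ and $f_\l\ge 0$, testing the weak convergence against $\mathbf{1}_{P_\t^c}\in L^\infty$ gives
\begin{equation}
\int_{P_\t^c}f_\l\,\omega^n\longrightarrow\int_{P_\t^c}f\,\omega^n=0,
\end{equation}
so $f_\l\to 0$ in $L^1(P_\t^c)$; since $f_\l^2\le Mf_\l$, this upgrades to $\int_{P_\t^c}f_\l^2\,\omega^n\to 0$.

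The substantial piece is $P_\t$, where the affineness of $\cM$ enters. Writing $\Theta(x):=x\log x$, which is strictly convex with $\Theta''(x)=1/x\ge 1/M$ on $(0,M]$, I would first promote the truncated statement of Corollary (\ref{cor-ae-001}) to genuine entropy convergence
\begin{equation}
\int_{X_\t}\Theta(f_\l)\,\omega^n=H(\vp_{\ep_\l})\longrightarrow H(\vp)=\int_{X_\t}\Theta(f)\,\omega^n,
\end{equation}
by bounding the truncation defect $0\le H_A(\vp_\ep)-H(\vp_\ep)\le\delta(A)$ uniformly in $\ep$ (using $x\log(b_A/x)\le b_A/e$ with $b_A\le Ce^{-A}$) and letting first $\ep\to 0$, then $A\to\infty$. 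Granting this, the Taylor remainder of $\Theta$ yields the pointwise Bregman inequality, valid for every $a\in[0,M]$ with base point $b=f\ge\k_0$:
\begin{equation}
\Theta(f_\l)-\Theta(f)-\Theta'(f)(f_\l-f)\ge\frac{1}{2M}(f_\l-f)^2\qquad\text{on }P_\t.
\end{equation}
Because the gap phenomenon keeps $f\ge\k_0$ on $P_\t$, here $\Theta'(f)=\log f+1\in L^\infty(P_\t)$, so $\int_{P_\t}\Theta'(f)(f_\l-f)\,\omega^n\to 0$ along the weak convergence. Since $\Theta(f)=0$ on $P_\t^c$, the entropy convergence together with the weak lower semicontinuity of $u\mapsto\int_{P_\t^c}\Theta(u)$ forces $\limsup_\l\int_{P_\t}\Theta(f_\l)\,\omega^n\le\int_{P_\t}\Theta(f)\,\omega^n$. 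Integrating the Bregman inequality over $P_\t$ and taking $\limsup_\l$ then makes its right-hand side non-positive, giving $\int_{P_\t}(f_\l-f)^2\,\omega^n\to 0$. Adding the two pieces would give $\|f_\l-f\|_{L^2}\to 0$.

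The main obstacle is the piece $P_\t$: the entropy density $\Theta'(x)=\log x+1$ diverges to $-\infty$ as $x\to 0$, so a uniform-convexity (Bregman) estimate cannot be applied globally. The gap phenomenon is precisely what resolves this, confining the vanishing of $f$ to $P_\t^c$—handled by the elementary positivity argument—while keeping $f\ge\k_0$ on $P_\t$, where $\Theta'(f)$ is controlled. A secondary point is that the affine hypothesis only supplies the truncated entropy through Corollary (\ref{cor-ae-001}) and Lemma (\ref{lem-int-001}), so the passage to the untruncated entropy via the uniform defect bound must be carried out with care.
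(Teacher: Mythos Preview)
Your argument is correct. It is closest in spirit to the paper's \emph{second} proof (``Yet another proof''), which first upgrades Corollary~(\ref{cor-ae-001}) to genuine entropy convergence $H(\vp_{\ep})\to H(\vp)$ and then asserts that strong $L^2$ convergence follows, citing either ``the previous argument'' or, later (Proposition~(\ref{prop-app-001})), a theorem of Chen--Tian. You make that last implication explicit via the Bregman remainder of $\Theta(x)=x\log x$, combined with the clean $P_\t / P_\t^c$ split; the gap phenomenon is exactly what keeps $\Theta'(f)=1+\log f$ bounded on $P_\t$ so that the linear term dies under weak convergence, while on $P_\t^c$ you use only positivity and the uniform bound. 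Your uniform truncation-defect estimate $0\le H_A(\vp_\ep)-H(\vp_\ep)\le C e^{-A}$ is a perfectly good substitute for the paper's appeal to lower semicontinuity of the entropy in Corollary~(\ref{cor-l2-001}).

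The paper's \emph{first} proof takes a genuinely different route: rather than first passing to the untruncated entropy, it works directly with the truncated integrand $h_\k(x)=\max\{x\log x,\,x\log\k\}$, carries out a four-case analysis depending on the relative positions of $f$, $f_\l$, and $\k$, and extracts the quantitative bound (Lemma~(\ref{lem-l2-0010}))
\[
\limsup_{\l}\int_X (f_\l-f)^2\,\omega^n \;\le\; \Bigl(2+\tfrac{8C}{\k_0}\Bigr)\max_{X_\t}\k,
\]
from which the conclusion follows by letting $A\to\infty$. That approach is heavier but yields an explicit rate in $\k\sim e^{-A}$; yours is shorter and more conceptual, and isolates precisely where the gap $\k_0$ enters.
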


\begin{proof}
According to Chen-Tian (Theorem (7.1.1), \cite{CT}), 
the $L^2$-convergence of the difference $(f_\l - f)$
follows from the convergence of the entropy  as 
\begin{equation}
\label{l2-018}
\lim_{\l\rightarrow \infty} \int_X \kf(\vp_{\ep_\l}) \log \kf(\vp_{\ep_\l}) = \int_X \kf(\vp) \log \kf (\vp),
\end{equation}
and this is proved in the Corollary (\ref{cor-l2-001}) below. 
\end{proof}

\begin{corollary}
\label{cor-l2-001}
Suppose the Mabuchi functional is affine along a geodesic $\cG$.
Then on each fiber $X_{\t}, \t\in [0,1]$, we have 
$$ \lim_{\ep\rightarrow 0} H(\vp_{\ep}) = H(\vp). $$
\end{corollary}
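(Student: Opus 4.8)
The plan is to upgrade the truncated statement $\lim_{\ep\to 0}H_A(\vp_\ep)=H(\vp)$, which the proof of Corollary (\ref{cor-ae-001}) already delivers, to the untruncated one, by showing that the truncation error is negligible \emph{uniformly in} $\ep$. Throughout I fix the fiber $X_\t$ and write $\k=\k(\cdot,A)=e^{\chi-A}/\omega^n$, so that $\kf_A(\vp_\ep)=\max\{\kf(\vp_\ep),\k\}$. Since $\vp_\ep$ is smooth, $\kf(\vp_\ep)$ is a smooth positive function that is uniformly bounded above, and because $s\log s\ge -1/e$ the integral $H(\vp_\ep)$ is finite, so the manipulations below are legitimate.

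The first step is a pointwise estimate of the truncation error. Since $\kf_A(\vp_\ep)\ge\kf(\vp_\ep)$ we have $\log\kf_A(\vp_\ep)\ge\log\kf(\vp_\ep)$, hence $H_A(\vp_\ep)\ge H(\vp_\ep)$, and the integrand vanishes wherever $\kf(\vp_\ep)\ge\k$. Thus
$$0\le H_A(\vp_\ep)-H(\vp_\ep)=\int_{\{\kf(\vp_\ep)<\k\}}\kf(\vp_\ep)\,\log\frac{\k}{\kf(\vp_\ep)}\,\omega^n .$$
On $0<s<\k$ the function $s\mapsto s\log(\k/s)$ is maximized at $s=\k/e$ with value $\k/e$, so the integrand is bounded by $(\max_{X_\t}\k)/e$ and therefore
$$0\le H_A(\vp_\ep)-H(\vp_\ep)\le C_0\,\max_{X_\t}\k,$$
with $C_0=\Vol(X_\t)/e$ independent of both $\ep$ and $A$. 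The key point is that this bound does \emph{not} involve $\ep$.

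The second step assembles the pieces. For $A$ large the proof of Corollary (\ref{cor-ae-001}) shows $\lim_{\ep\to 0}H_A(\vp_\ep)=H(\vp)$, where the gap identity (\ref{ae-0025}), $H_A(\vp)=H(\vp)$, has been used. Combining this with the uniform bound of the first step and the triangle inequality yields
$$\limsup_{\ep\to 0}\big|H(\vp_\ep)-H(\vp)\big|\le \limsup_{\ep\to 0}\big|H_A(\vp_\ep)-H(\vp)\big|+C_0\max_{X_\t}\k = C_0\max_{X_\t}\k .$$
Since the left-hand side is independent of $A$ and $\max_{X_\t}\k=\max_{X_\t}e^{\chi-A}/\omega^n\to 0$ as $A\to\infty$, letting $A\to\infty$ forces $\limsup_{\ep\to 0}|H(\vp_\ep)-H(\vp)|=0$, which is the claim.

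The step I expect to carry the real weight is controlling $H_A(\vp_\ep)-H(\vp_\ep)$ uniformly in $\ep$: it is precisely what permits exchanging the order of the two limits $\ep\to 0$ and $A\to\infty$. The elementary inequality $s\log(\k/s)\le\k/e$ makes this transparent, and its force comes from the fact that the ``bad'' region $\{\kf(\vp_\ep)<\k\}$ contributes only through small values of the volume element, where $s\log s$ is tame. The deeper input feeding the whole argument is the affineness of $\cM$, which via the gap phenomenon guarantees $H_A(\vp)=H(\vp)$ and via Corollary (\ref{cor-ae-001}) guarantees the truncated convergence.
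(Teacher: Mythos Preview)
Your argument is correct. The route differs from the paper's in one notable way: the paper splits the proof into an upper bound $\limsup_{\ep\to 0}H(\vp_\ep)\le H(\vp)$, obtained from the monotonicity $H(\vp_\ep)\le H_A(\vp_\ep)$ together with $\limsup_{\ep\to 0}H_A(\vp_\ep)=H(\vp)$, and a lower bound $\liminf_{\ep\to 0}H(\vp_\ep)\ge H(\vp)$, obtained by invoking the lower semi-continuity of the entropy functional under weak convergence. You instead prove the two-sided quantitative estimate $0\le H_A(\vp_\ep)-H(\vp_\ep)\le C_0\max_{X_\tau}\k$ via the elementary bound $s\log(\k/s)\le \k/e$, and then sandwich. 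This buys you a self-contained proof that does not rely on the lower semi-continuity of entropy, and also yields an explicit rate in $A$. One small remark: as noted in the paper, the auxiliary function $\chi$ actually carries a suppressed $\ep$-dependence ($\chi_\ep=\pi^*\chi_0-k_0\Phi_\ep$), so to be fully precise your bound $\max_{X_\tau}\k\to 0$ should be accompanied by the observation that $\chi_\ep$ is uniformly bounded in $\ep$, which the paper records and which makes your constant $C_0\max_{X_\tau}\k$ genuinely independent of $\ep$.
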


\begin{proof}
In the proof of Corollary (\ref{cor-ae-001}),
we have seen that the $A$-truncated entropy $H_A(\vp_{\ep})$ is decreasing in $A$ for each $\ep$ fixed. 
Moreover, we proved 
$$ \limsup_{\ep\rightarrow 0} H_A(\vp_\ep) = H_A(\vp) = H(\vp), $$
for all $A$ large enough. 
Therefore, we conclude 
the following inequality
\begin{equation}
\label{l2-019}
\limsup_{\ep\rightarrow 0} H(\vp_\ep) \leq \limsup_{\ep\rightarrow 0} H_A(\vp_\ep) = H(\vp).
\end{equation}

On the other hand, by the lower semi-continuity property of the entropy functional, we have 

\begin{equation}
\label{l2-020}
\liminf_{\ep\rightarrow 0} H(\vp_\ep)  \geq H(\vp),
\end{equation}
and then our result follows.

\end{proof}

\begin{rem}
\label{rem-l2-0011}
Under our assumptions, this Theorem says that for each subsequence $f_{\l}$
converges to $f$ in the weakly $L^2$-norm actually converges in the strong $L^2$-norm.
Moreover, the Corollary (\ref{cor-l2-001}) implies the convergence of the Mabuchi functional along the $\ep$-geodesic,
namely, we have 
$$ \lim_{\ep\rightarrow 0} \cK_{\ep} (t) = \cK(t), $$
for each $t\in [0,1]$.
\end{rem}

\begin{rem}
\label{rem-l2-0022}
As a simple application of Theorem (\ref{thm-l2-001}),
the fiber-wise volume element $f_\l$ of the $\ep$-geodesic converges to 
the volume element $f$ of the geodesic in measure. 
Moreover, thanks to the Riesz-Lebesgue Theorem, we have 
$$f_\l \rightarrow f $$
almost everywhere on each fiber, possibly after passing to a subsequence. 
\end{rem}

\section{The $\ep$-affine energy and non-degneracy }
Suppose the Mabuchi functional $\cM$ is affine along a geodesic $\cG$.
Due to the convergence $\widetilde{\cK}_{\ep, A} \rightarrow \cK_A$ of a sequence of convex functions, the first derivative $\widetilde{\cK}'_{\ep, A}(t)$
also converges uniformly to the slope $k: = \cK'_A(t)$ on the closed interval $[\delta, 1 - \delta ]$ for any $\delta >0$ small. 
However, the boundary behaviour of $\widetilde{\cK}'_{\ep, A}(t)$ is unclear in general. 
Therefore, we impose the following condition on the boundary of $\cG$. 

\begin{defn}
\label{def-ch-001}
The Mabuchi functional $\cM$ is essentially affine along a geodesic $\cG$, if 
$\cK(t)$ is a linear function on $[0,1]$ with slope $k$ and we have 
\begin{equation}
\label{ch-001}
- \int_X \dot{\vp}_t (  R_{\vp_t}  - \underline{R}) \omega_{\vp_t}^n  = k,
\end{equation}
at $t=0, 1$. 
\end{defn}

It is proved in Berman-Berndtsson (\cite{BB}) that the one side inequality of equation (\ref{ch-001}) 
always holds at the two boundaries.
\begin{equation}
\label{bb-001}
  - \int_X \dot{\vp}_0 (  R_{\vp_0}  - \underline{R}) \omega_{\vp_0}^n \leq \frac{d}{dt}|_{t = 0^+} \cK(t) = k,  
\end{equation}
and 
\begin{equation}
\label{bb-002}
 - \int_X \dot{\vp}_1 (  R_{\vp_1}  - \underline{R}) \omega_{\vp_1}^n \geq \frac{d}{dt}|_{t = 1^-} \cK(t) = k.  
\end{equation}

\subsection{A stronger condition}

As we have seen before, the potential $\vp_\ep(t, \cdot)$ of the $\ep$-geodesic converges to 
the potential $\vp(t,\cdot)$ of the geodesic $\cG$ in $C^{1,\a}$-norm for each $\a\in (0,1)$ on $\Gamma\times X$.
Therefore, we have 
\begin{equation}
\label{eaf-001}
-  \lim_{\ep\rightarrow 0} \cK'_{\ep} (t) =  \lim_{\ep\rightarrow 0} \int_X \dot{\vp}_{\ep} (  R_{\vp_\ep}  - \underline{R}) \omega_{\vp_\ep}^n =  \int_X \dot{\vp} (  R_{\vp}  - \underline{R}) \omega_{\vp}^n,
\end{equation}
at the boundaries $t=0,1$, since the metric $\omega_{\ep}$ coincides with $\omega_{\vp}$ at these two boundaries. 
Hence we can compute the first derivative of the energy $\cK_{\ep, A}$ near the boundary as follows. 
\begin{lemma}
\label{lem-eaf-000}
We have 
\begin{equation}
\label{eaf-003}
\lim_{\ep\rightarrow 0}\cK'_{\ep, A} (t) = -\int_X \dot{\vp} (  R_{\vp}  - \underline{R}) \omega_{\vp}^n,
\end{equation}
at $t= 0,1$. 
\end{lemma}

\begin{proof}

We first note that the volume element $\omega_{\ep}^n$ varies smoothly near the boundaries. 
Therefore, there exist a small number $\delta>0$, 
possibly depends on $\ep$ and $A$, such that 
the truncated volume element $\kf_A(\vp_\ep)$ coincides with $\kf(\vp_{\ep})$ for all $A$ large enough 
and all $t\in [0,\delta) \bigcup ( 1-\delta, 1]$.

This implies that the two entropies $H_A(\vp_{\ep})$ and $H(\vp_{\ep})$ also coincides in this small interval. 
Therefore, 
the $A$-truncated Mabuchi functional $\cK_{\ep, A}$ along $\cG_{\ep}$ 
is equal to the Mabuchi functional $\cK_{\ep}$ along $\cG_{\ep}$ 
in a small neighbourhood of the two boundaries, namely, 
 we have 
\begin{equation}
\label{eaf-002}
\cK_{\ep, A} (t) = \cK_{\ep}(t),
\end{equation}
for all $t\in [0,\delta) \bigcup ( 1-\delta, 1]$.
Therefore, we have for all $t$ in this small interval 

\begin{equation}
\label{eaf-003}
\cK'_{\ep, A} (t) = \cK'_{\ep}(t) = -\int_X \dot{\vp}_{\ep} (  R_{\vp_\ep}  - \underline{R}) \omega_{\vp_\ep}^n,
\end{equation}
and then our result follows from equation (\ref{eaf-001}).

\end{proof}

In particular, we conclude the following result. 
\begin{lemma}
\label{lem-eaf-001}
Suppose the Mabuchi functional $\cM$ is essentially affine along a geodesic $\cG$ with slope $k$. 
Then $\cK'_{\ep, A}$ converges uniformly to $k$ on $[0,1]$ for all $A$ large enough. 
\end{lemma}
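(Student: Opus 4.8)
The plan is to combine the interior uniform convergence of derivatives of convex functions, which is already available, with a two-sided control at the endpoints supplied precisely by the essentially affine hypothesis. First I would pass from $\cK_{\ep, A}$ to the genuinely convex function $\widetilde{\cK}_{\ep, A}$ of Theorem (\ref{int-thm-001}). Since $\cK_{\ep,A}(t) = \widetilde{\cK}_{\ep,A}(t) + \ep C_A t(1-t)$, the two derivatives differ by $\ep C_A(1-2t)$, which is $O(\ep)$ uniformly on $[0,1]$, so it suffices to prove that $\widetilde{\cK}'_{\ep,A}$ converges uniformly to $k$. By convexity, $\widetilde{\cK}'_{\ep,A}$ is monotone non-decreasing in $t$ for each fixed $\ep$. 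Under the essentially affine assumption, Theorem (\ref{thm-ae-001}) gives $\cK_A = \cK$, which is linear of slope $k$, so $\cK'_A \equiv k$; hence, as recalled just before Definition (\ref{def-ch-001}), $\widetilde{\cK}'_{\ep,A} \to k$ uniformly on each interior interval $[\delta, 1-\delta]$.

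The remaining task is the boundary, and this is where the hypothesis enters. Evaluating at the endpoints, equation (\ref{eaf-003}) gives
\[ \cK'_{\ep, A}(0) = -\int_X \dot{\vp}_{\ep}\,(R_{\vp_{\ep}} - \underline{R})\,\omega_{\vp_{\ep}}^n\big|_{t=0}, \]
and the analogous identity at $t=1$. Taking $\ep\to 0$ and using the $C^{1,\a}$-convergence through equation (\ref{eaf-001}), together with the essentially affine identity (\ref{ch-001}), I would conclude $\cK'_{\ep,A}(0)\to k$ and $\cK'_{\ep,A}(1)\to k$. Consequently $\widetilde{\cK}'_{\ep,A}(0)\to k$ and $\widetilde{\cK}'_{\ep,A}(1)\to k$ as well, the $\ep C_A$ correction at the endpoints being harmless.

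Finally I would run a squeeze. Fix $\eta>0$ and a small $\delta>0$. For $\ep$ small the interior estimate gives $|\widetilde{\cK}'_{\ep,A}(\delta)-k| \le \eta$ while the boundary step gives $|\widetilde{\cK}'_{\ep,A}(0)-k|\le\eta$; monotonicity then forces
\[ k-\eta \le \widetilde{\cK}'_{\ep,A}(0) \le \widetilde{\cK}'_{\ep,A}(t) \le \widetilde{\cK}'_{\ep,A}(\delta) \le k+\eta \]
for every $t\in[0,\delta]$, and symmetrically $|\widetilde{\cK}'_{\ep,A}(t)-k|\le\eta$ on $[1-\delta,1]$. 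Combined with the interior estimate on $[\delta,1-\delta]$, this yields $|\widetilde{\cK}'_{\ep,A}(t)-k|\le\eta$ uniformly on $[0,1]$, hence the same for $\cK'_{\ep,A}$ up to the $O(\ep)$ term. The main obstacle is entirely the endpoint behaviour: for a merely affine (not essentially affine) $\cM$, the Berman-Berndtsson one-sided inequalities recalled after Definition (\ref{def-ch-001}) only yield $\cK'_{\ep,A}(0)\to$ a limit $\le k$ and $\cK'_{\ep,A}(1)\to$ a limit $\ge k$, permitting a boundary layer in which the slope jumps and uniformity fails. The equality (\ref{ch-001}) is exactly what pins both endpoint limits to $k$ and makes the squeeze close.
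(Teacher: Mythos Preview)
Your proposal is correct and uses the same two ingredients as the paper: endpoint convergence $\widetilde{\cK}'_{\ep,A}(0),\,\widetilde{\cK}'_{\ep,A}(1)\to k$ coming from (\ref{eaf-001}), (\ref{eaf-003}) and the essentially affine identity (\ref{ch-001}), together with monotonicity of $\widetilde{\cK}'_{\ep,A}$ from convexity.

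The only comment is that your route is longer than necessary. The interior uniform convergence on $[\delta,1-\delta]$ and the two-stage squeeze are superfluous: once you know $\widetilde{\cK}'_{\ep,A}(0)\to k$ and $\widetilde{\cK}'_{\ep,A}(1)\to k$, monotonicity alone gives
\[
\widetilde{\cK}'_{\ep,A}(0)\ \le\ \widetilde{\cK}'_{\ep,A}(t)\ \le\ \widetilde{\cK}'_{\ep,A}(1)
\]
for \emph{every} $t\in[0,1]$, and this single squeeze already forces uniform convergence on the whole interval. This is exactly how the paper argues, in three lines; your appeal to Theorem~(\ref{thm-ae-001}) and to interior convergence of derivatives of convex functions is not needed.
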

\begin{proof}
Combing with equation (\ref{eaf-001}) and (\ref{eaf-003}), we have for each $A$ large 

\begin{equation}
\label{eaf-004}
\lim_{\ep\rightarrow 0} \widetilde\cK'_{\ep, A}  (t) =  \lim_{\ep\rightarrow 0} \cK'_{\ep, A}  (t) = k,
\end{equation}
at $t = 0,1$.  
Moreover, the function $\widetilde\cK_{\ep, A}  $ is convex on the interval $[0,1]$,
and then we have the control 
$$  \widetilde\cK_{\ep, A}'  (0) \leq  \widetilde\cK'_{\ep, A}  (t)  \leq  \widetilde\cK'_{\ep, A}  (1),$$
for all $t\in (0,1)$. 
Therefore, $ \widetilde\cK'_{\ep, A}  (t) \rightarrow k$
uniformly as $\ep\rightarrow 0$, and then our result follows from equation (\ref{int-0091}).

\end{proof}

Next we will introduce an even stronger condition  based on the $\ep$-geodesic.

\begin{defn}
\label{def-ch-002}
The Mabuchi functional $\cM$ is $\ep$-affine along a geodesic $\cG$, 
if there exists a uniform constant $ C >0$ such that we have 
\begin{equation}
\label{ll-001}
  \cK_{\ep}'(1) -    \cK_{\ep}'(0)  \leq C \ep, 
\end{equation}
for all $\cK_{\ep}$ with $\ep >0 $ small enough.  

\end{defn}

The geodesic $\cG$ and its $\ep$-geodesic $\cG_{\ep}$ are uniquely determined, when the boundary values $\vp_0, \vp_1$ are fixed. 
Therefore, our definition of $\ep$-affine energy is well posed. 
In fact, this is a stronger condition as follows. 

\begin{lemma}
\label{lem-eaf-0010}
If the Mabuchi functional $\cM$ is $\ep$-affine along a geodesic $\cG$,
then it is essentially affine along $\cG$.
\end{lemma}

\begin{proof}
Thanks to Berman-Berndtsson's estimates (equation (\ref{bb-001}), (\ref{bb-002}))
and the fact that the energy $\cK$ is a convex function, it is enough to prove at the boundary of $\cG$ we have 
$$   \int_X \dot{\vp}_0 (  R_{\vp_0}  - \underline{R}) \omega_{\vp_0}^n  =   \int_X \dot{\vp}_0 (  R_{\vp_1}  - \underline{R}) \omega_{\vp_1}^n.  $$
Due to equation (\ref{eaf-001}), this reduces to prove 
$$ \lim_{\ep\rightarrow 0} \cK'_{\ep}(0) =  \lim_{\ep\rightarrow 0} \cK'_{\ep}(1).   $$ 
One side inequality, namely, 
$$ \lim_{\ep\rightarrow 0} \cK'_{\ep}(1) \leq \lim_{\ep\rightarrow 0} \cK'_{\ep}(0)   $$
is directly implied by our condition (equation (\ref{ll-001})). 
The other side follows from the Lemma (\ref{lem-eaf-000}), and the fact that the energy $\widetilde{\cK}_{\ep, A}$
is a convex function.
\end{proof}

In other words, the first derivative $\cK'_{\ep,A}(t)$ converges to the slope $k$ in a uniform speed controlled by $C\ep$ for all $t\in [0,1]$,
if $\cM$ is $\ep$-affine along $\cG$. This can be inferred from a similar argument as in Lemma (\ref{lem-eaf-001}).
Based on this stronger condition, we state our main theorem on the uniformly non-degenerate of $\cG$ as follows.

\begin{theorem}
\label{thm-ch-001}
Suppose the Mabuchi functional $\cM$ is $\ep$-affine along a geodesic $\cG$.
Then $\cG$ is uniformly fiberwise non-degenerate, 
namely, there exists a uniform constant $\k_1 >0$ such that 
$$   \cG|_{X_t} > \k_1\omega,$$
for almost everywhere $t\in [0,1]$.
\end{theorem}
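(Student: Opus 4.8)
The plan is to reduce the statement to a uniform positive lower bound for the fiberwise volume element and then to produce that bound from the $\ep$-affine hypothesis through the gap phenomenon. Since $\cG$ has $\cC^{1,\bar 1}$-regularity, we already have a uniform upper bound $\omega_{\vp}\leq C\omega$ on each fiber; if the eigenvalues of $\omega_{\vp}$ relative to $\omega$ lie in $[0,C]$ and their product $\kf(\vp)=\omega_{\vp}^n/\omega^n$ is bounded below by a positive constant, then each eigenvalue is bounded below and $\omega_{\vp}>\k_1\omega$. Hence it suffices to find a uniform $\k_1>0$ with $\kf(\vp)\geq\k_1$ almost everywhere on $X_t$, for almost every $t$. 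Because $\cM$ is $\ep$-affine it is in particular affine, so the gap phenomenon (\ref{ae-006}) applies: on each fiber either $\kf(\vp)>\k_0$ or $\kf(\vp)=0$ almost everywhere. Thus the only obstruction is that the vanishing set $\{\kf(\vp)=0\}$ could carry positive measure for a positive-measure set of $t$, and the whole problem becomes to rule this out.

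The second step (Step (2) in the introduction) extracts a \emph{partial} $W^{1,2}$-estimate for the volume element. By formula (\ref{eaf-003}) the slope is $\cK_{\ep}'(t)=-\int_X\dot\vp_{\ep}(R_{\vp_{\ep}}-\underline R)\omega_{\ep}^n$, so the $\ep$-affine hypothesis (Definition \ref{def-ch-002}) gives $\int_0^1\cK_{\ep}''\,dt=\cK_{\ep}'(1)-\cK_{\ep}'(0)=O(\ep)$; via Lemma (\ref{lem-eaf-001}) the same holds for the truncated functional, $\int_0^1\cK_{\ep,A}''\,dt\to0$. On the other hand the almost-convexity behind Theorem (\ref{int-thm-001}) bounds $\cK_{\ep,A}''$ from below, and the Bochner-type identity underlying the Hessian formula (\ref{int-005})—more precisely the inequality (\ref{int-007}) confined to the region $P_{\ep,A}$—exhibits the slack between these two bounds as the fiber Dirichlet energy of $f_{\ep}=\log(\omega_{\ep}^n/\omega^n)$ there. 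Squeezing the integrated Hessian between the $O(\ep)$ upper bound and the almost-convex lower bound, I expect a bound, uniform in $\ep$ and $A$ (in fact $o(1)$),
$$\int_0^1\!\!\int_{P_{\ep,A}\cap X_t}|\nabla_X f_{\ep}|^2\,\omega_{\ep}^n\,dt\leq C,$$
so that, by Fubini, for almost every $t$ the inner fiber integral stays bounded along a subsequence.

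The third step is the soft analytic mechanism of Proposition (\ref{prop-nd-001}). Fix an admissible $t$ and choose a cutoff $\phi\in C^{\infty}(\mathbb{R})$ with $\phi\equiv0$ on $(-\infty,\k_0/4]$ and $\phi\equiv1$ on $[3\k_0/4,+\infty)$, so that $\phi'$ is supported in $(\k_0/4,3\k_0/4)\subset(0,+\infty)$. The crucial observation is that $\phi'(f_{\ep})$ only probes values of the volume element bounded below by $\k_0/4$, hence lying in the controlled region $P_{\ep,A}$, whence
$$\int_{X_t}|\nabla_X\phi(f_{\ep})|^2\,\omega^n=\int_{X_t}\phi'(f_{\ep})^2\,|\nabla_X f_{\ep}|^2\,\omega^n\leq C\int_{P_{\ep,A}\cap X_t}|\nabla_X f_{\ep}|^2\,\omega_{\ep}^n,$$
which is uniformly bounded. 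Since $f_{\ep}\to\kf(\vp)$ strongly in $L^2$ and almost everywhere by Theorem (\ref{thm-l2-001}), we get $\phi(f_{\ep})\to\mathbf{1}_{\{\kf(\vp)>0\}}$ in $L^2$, and the uniform $W^{1,2}$-bound forces $\mathbf{1}_{\{\kf(\vp)>0\}}\in W^{1,2}(X_t)$. A $\{0,1\}$-valued $W^{1,2}$-function on the connected fiber $X_t$ is almost everywhere constant, and the cohomological identity $\int_{X_t}\kf(\vp)\,\omega^n=\int_X\omega^n>0$ rules out the value $0$; hence $\kf(\vp)>0$, and so $\kf(\vp)\geq\k_0$ almost everywhere by the gap. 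Feeding this back through the first step yields $\cG|_{X_t}>\k_1\omega$ for almost every $t$, with $\k_1$ depending only on $\k_0$ and the $\cC^{1,\bar 1}$-bound.

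The main obstacle is the second step. The geodesic-curvature contribution $\int_{X_t}\rho_{\ep}(\underline R-R_{\vp_{\ep}})\omega_{\ep}^n=\ep\int_{X_t}(\underline R-R_{\vp_{\ep}})\omega^n$ is genuinely dangerous: as $\ep\to0$ the fiber metric $\omega_{\ep}$ degenerates, $R_{\vp_{\ep}}$ need not stay bounded, and rewriting $R_{\vp_{\ep}}\omega^n=R_{\vp_{\ep}}e^{-f_{\ep}}\omega_{\ep}^n$ reveals an $e^{-f_{\ep}}$ blow-up precisely on the region one cannot control. The delicate work is to integrate $dd^c f_{\ep}$ by parts along the fibers, to exploit the $\ep$-prefactor together with $\rho_{\ep}\omega_{\ep}^n=\ep\omega^n$, and to confine all estimates to $P_{\ep,A}$ so that the surviving Dirichlet energy of $f_{\ep}$ remains bounded; this is also where the constant $c_A$ of (\ref{int-007}) and the truncation parameter $A$ must be tracked uniformly. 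By contrast, the third step is essentially the chain rule for Sobolev functions together with the weak closedness of $W^{1,2}$, and the first step is linear algebra.
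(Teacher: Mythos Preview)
Your three-step outline is correct and aligns with the paper for Steps (1) and (2). The reduction to a lower bound for $\kf(\vp)$ via the $\cC^{1,\bar 1}$-bound is exactly the paper's opening move, and your expected partial $W^{1,2}$-estimate is precisely what the paper proves: the Bochner identity of Lemma~\ref{lem-com-001} produces a term $\ep e^{-f_{\ep}}|\nabla_{\ep} f_{\ep}|^2$ in the Hessian of the truncated functional, and feeding in the $\ep$-affine hypothesis and dividing through by $\ep$ yields (\ref{com-018})--(\ref{com-020}). Your worry about the $R_{\vp_{\ep}}$ blow-up is resolved exactly as you anticipate, by working with the truncated functional $\cK_{\ep,A}$ so that outside $P_{\ep,A}$ one uses $dd^c\log h_A$ rather than $dd^c f_{\ep}$; this is (\ref{com-0150}). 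One small point: your displayed bound should have an $\ep$-independent constant $C_A$ rather than $o(1)$, since the $O(\ep)$ from the $\ep$-affine condition is cancelled by the $\ep$ prefactor on the Dirichlet term, not dominated by it. Also, you use $f_{\ep}$ for $\log(\omega_{\ep}^n/\omega^n)$ in Step~(2) and for the ratio $\omega_{\ep}^n/\omega^n$ itself in Step~(3); this is harmless but should be cleaned up.

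Where you genuinely diverge is Step~(3). The paper establishes Proposition~\ref{prop-nd-001} by an induction on dimension: in dimension one a direct length-vs-Dirichlet estimate (Lemma~\ref{lem-nd-001}) shows that transitions from $\{f>\k_0\}$ to $\{f=0\}$ cost a definite amount of arc-length, contradicting the zero-distance forced by connectedness (Lemma~\ref{lem-nd-0010}); higher dimensions are reduced to slices by Fubini and Fatou. Your route is shorter and more conceptual: compose with a smooth cutoff $\phi$ supported in $(\k_0/4,3\k_0/4)$ so that $\nabla\phi(\kf(\vp_{\ep}))$ lives entirely in $P_{\ep,A}$, pass to a weak $W^{1,2}$-limit using Theorem~\ref{thm-l2-001} to identify the $L^2$-limit as $\mathbf{1}_{\{\kf(\vp)>0\}}$, and then invoke the fact that a $\{0,1\}$-valued $W^{1,2}$-function on a connected manifold is constant. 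This is a legitimate and arguably cleaner substitute for the paper's inductive argument; what the paper's approach buys, by contrast, is an explicit quantitative lower bound $|b-a|\geq \k_0^2/16C$ on the width of any transition layer, which is more than is needed for the theorem but potentially useful independently.
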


We emphasis that the constant $\k_1$ does not depend on $t$. 
Before moving on, we need to recall and improve some computations in \cite{CLP}.

\subsection{Estimates}

We are going to find a lower bound of the complex Hessian of the energy $\cK_{\ep, A}$ (equation (\ref{int-001})). 
As explained in the proof of Theorem (\ref{int-thm-001}),
this boils down to evaluate the lower bound of the following $(n+1, n+1)$ form on $P_{\ep, A}$
$$ (dd^c f_{\ep}  - Ric( \omega)  )\wedge \cG_{\ep}^n, $$
Locally near a point $p\in \Gamma\times X$, we write the $\ep$-geodesic as follows
$$ \cG_{\ep}: = g_{t\bar t} dt\wedge d\bar t + \sum_{\a,\b =1}^n \left( g_{t\bar\b} dt\wedge d\bar z^{\b} + g_{\a\bar t} dz^{\a}\wedge d\bar t + g_{\a\bar\b} dz^{\a}\wedge d\bar z^{\b} \right).$$
This is a K\"ahler metric on $\Gamma\times X$,
and its restriction on the fiber $X_t: = \{ t\} \times X$ can be written as 
$$ \omega_{\ep}(t, \cdot): = \cG_{\ep}|_{X_t} = \sum_{\a,\b =1 }^n g_{\a\bar\b} dz^{\a}\wedge d\bar z^{\b}.  $$
Up to a change of holomorphic coordinates on $X_t$, we can assume 
\begin{equation}
\label{com-001}
g_{\a\bar\b} = \delta_{\a\b}; \ \ \ dg =0
\end{equation}
at this particular point $p$. 
Near this point, the $\ep$-geodesic equation can be re-written as 
\begin{equation}
\label{com-002}
\rho_\ep: = g_{t\bar t} - g^{\bar\b\a} g_{\a \bar t} g_{t\bar\b} = \ep e^{-f_{\ep}}.
\end{equation}
Then we introduce another $(1,1)$ form $\chi_{\ep}$ defined by the following equation 
$$ \chi_{\ep}:= \cG_{\ep} -   \rho_{\ep} \sqrt{-1}dt\wedge d\bar t.  $$
This $(1,1)$-form may not be closed anymore, but it is still positive definite on each fiber $X_t$,
and satisfies 
$$ \chi_{\ep}^{n+1} = 0. $$
Hence we have 
\begin{eqnarray}
\label{com-0020}
\cG_{\ep}^n &=& 
\chi_{\ep}^n + n \rho_{\ep} \sqrt{-1} dt \wedge d\bar t \wedge \chi_{\ep}^{n-1}
\nonumber\\
&=& \chi_{\ep}^n + n \rho_{\ep} \sqrt{-1} dt \wedge d\bar t \wedge \cG_{\ep}^{n-1},
\end{eqnarray}
and then we estimate the first factor as follows.

\begin{lemma}
\label{lem-comp-001}
There exists a constant $c_A>0$, possibly depending on $A$, such that we have  
\begin{eqnarray}
\label{comp-002}
&& n\rho_{\ep} (dd^c f_{\ep} - Ric (\omega))\wedge  \sqrt{-1} dt\wedge d\bar t \wedge \cG_{\ep}^{n-1}  
\nonumber\\
&\geq&  \left\{ ( \Delta_{\ep} f_{\ep})  - c_A  \right\} \cG_{\ep}^{n+1},
\end{eqnarray}
in the open set $P_{\ep, A}$.
\end{lemma}
\begin{proof}
The L.H.S. of equation (\ref{comp-002}) can be computed as follows. 
\begin{equation}
\label{com-0021}
 n\rho_{\ep}  dd^c f_{\ep} \wedge \sqrt{-1} dt\wedge d\bar t \wedge \cG_{\ep}^{n-1}  = \ep (\Delta_{\ep} f_{\ep}) \sqrt{-1} dt\wedge d\bar t \wedge \omega^{n},
\end{equation}
and 
\begin{equation}
\label{com-0022}
- n\rho_{\ep}  Ric (\omega) \wedge \sqrt{-1} dt\wedge d\bar t \wedge \cG_{\ep}^{n-1}  = - \ep ( \tr_{\omega_{\ep}} Ric\omega ) \sqrt{-1} dt\wedge d\bar t \wedge \omega^{n}.
\end{equation}
We note that 
there exist a constant $c_A$ satisfying
\begin{equation}
\label{comp-003}
 \tr_{\omega_{\ep}} Ric\omega  <  c_A,
\end{equation}
on $P_{\ep, A}$, since the eigenvalues of $\omega_{\ep}$ are bounded from below (and above) by a uniform constant on this set. 
Therefore, we conclude the inequality 

\begin{eqnarray}
\label{com-0023}
&& n\rho_{\ep} (dd^c f_{\ep} - Ric (\omega))\wedge  \sqrt{-1} dt\wedge d\bar t \wedge \cG_{\ep}^{n-1}  
\nonumber\\
&\geq& ( \Delta_{\ep} f_{\ep})  \cG_{\ep}^{n+1} - c_A \cG_{\ep}^{n+1}.
\end{eqnarray}

\end{proof}

Next, we compute the second factor as follows. 
Introduce the following vector field on $\Gamma\times X$ as 
$$ v: = \frac{\d}{\d t} - g^{\bar\b \a} g_{t\bar\b} \frac{\d}{\d z^{\a}}. $$
Then one observes that this vector field $v$ generates 
the kernel of the $(1,1)$-form $\chi_{\ep}$. 
Hence we have 

\begin{equation}
\label{com-0024}
(dd^c f_{\ep} - Ric (\omega)) \wedge \chi_{\ep}^n = (dd^c f_{\ep} - Ric (\omega)) (v, \bar v)  \sqrt{-1} dt\wedge d\bar t \wedge \chi_{\ep}^n
\end{equation}
Therefore, the goal is to compute the lower bound of the following term
$$ \ddbar \log\det (g_{\a\bar\b}) (v, \bar v), $$
and then we have the following computation.

\begin{lemma}
\label{lem-com-001}
We have 
\begin{eqnarray}
\label{com-015}
 \ddbar \log\det (g_{\a\bar\b}) (v, \bar v) &=&  ||\dbar_X v ||^2_{\omega_{\ep}} + \ep e^{-f_{\ep}} |\nabla_{\ep} f_{\ep}|^2 - \ep e^{-f_{\ep}} ( \Delta_{\ep} f_{\ep} )
 \nonumber\\
 &=& || \dbar v ||^2_{\cG_{\ep}} -  \ep e^{-f_{\ep}} ( \Delta_{\ep} f_{\ep} ).
\end{eqnarray}
\end{lemma}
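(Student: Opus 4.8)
The first equality is already recorded as equation (\ref{com-005}), so the real content of the lemma is the purely algebraic identity
$$ ||\dbar_X v||^2_{\omega_{\ep}} + \ep e^{-f_{\ep}} |\nabla_{\ep} f_{\ep}|^2 = || \dbar v ||^2_{\cG_{\ep}}, $$
which I would establish pointwise at the chosen point $p$ by a careful bookkeeping of the four index-blocks that make up the full norm. The plan is to first expand $||\dbar v||^2_{\cG_{\ep}}$ in the normal coordinates (\ref{com-001}). Since $\dbar_{\lambda} v^{t} = \dbar_t v^t = 0$ by (\ref{com-006}) and (\ref{com-007}), only the $T^{1,0}X$-components $v^{\a}$ contribute, and the norm decomposes into a time-time, a time-space, a space-time and a space-space block weighted by the inverse-metric entries. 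These entries come from the block inversion of $\cG_{\ep}$ at $p$: taking the Schur complement of the lower-right block (which at $p$ equals $\delta_{pq}$) gives $\cG_{\ep}^{\bar t t} = \rho_{\ep}^{-1} = (\ep e^{-f_{\ep}})^{-1}$ by (\ref{com-002}), together with the off-diagonal and space-space formulas already recorded in (\ref{com-011}).

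Next I would feed the expression (\ref{com-010}) for $(\ep e^{-f_{\ep}})^2 |\nabla_{\ep} f_{\ep}|^2$ into the left side. Multiplying through by $(\ep e^{-f_{\ep}})^{-1}$ and rewriting its four summands via (\ref{com-012})--(\ref{com-014}), the diagonal summand reproduces the time-time block $\cG_{\ep}^{\bar t t} g_{\a\bar\b}\dbar_t v^{\a} \d_t v^{\bar\b}$; the two cross summands (the second being the complex conjugate of (\ref{com-013})) reproduce the time-space and space-time blocks; and the remaining summand reproduces the space-space block $\cG_{\ep}^{\bar\lambda\mu}g_{\a\bar\b}\dbar_{\lambda} v^{\a}\d_{\mu} v^{\bar\b}$ plus the single extra term $-g_{\a\bar\b}\dbar_{\lambda} v^{\a}\d_{\lambda} v^{\bar\b}$ appearing in (\ref{com-014}). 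Thus $\ep e^{-f_{\ep}}|\nabla_{\ep} f_{\ep}|^2$ equals the full norm minus that one defect term.

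The decisive observation closing the argument is that this defect term is exactly $-||\dbar_X v||^2_{\omega_{\ep}}$: at $p$ the fibre metric is the identity and $\dbar_X v$ has components $\dbar_{\lambda} v^{\a}$, so $g_{\a\bar\b}\dbar_{\lambda} v^{\a}\d_{\lambda} v^{\bar\b} = \sum_{\lambda,\a}|\dbar_{\lambda} v^{\a}|^2 = ||\dbar_X v||^2_{\omega_{\ep}}$. Adding $||\dbar_X v||^2_{\omega_{\ep}}$ therefore cancels the defect and assembles precisely the four blocks of $||\dbar v||^2_{\cG_{\ep}}$, yielding the claimed identity; combined with (\ref{com-005}) the lemma follows. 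The step I expect to be the only real source of error is the index bookkeeping — keeping the bar-placement and the holomorphic/antiholomorphic roles straight, and correctly matching the fourth summand of (\ref{com-010}) with the conjugate of (\ref{com-013}) — rather than any genuine analytic difficulty, since at the fixed point $p$ everything reduces to an identity of contracted tensors.
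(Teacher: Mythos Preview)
Your proposal is correct and follows essentially the same route as the paper: the paper also treats the first line as the already-established equation (\ref{com-005}) and obtains the second line by ``combining equations (\ref{com-005})--(\ref{com-014})'', i.e.\ by exactly the block-by-block bookkeeping you describe, using the inverse-metric formulas (\ref{com-011}) to identify the four summands of (\ref{com-010}) (after dividing by $\ep e^{-f_{\ep}}$) with the four blocks of $||\dbar v||^2_{\cG_{\ep}}$ and recognising the leftover term in (\ref{com-014}) as $-||\dbar_X v||^2_{\omega_{\ep}}$. Your write-up is in fact more explicit than the paper's, which simply gestures at the combination of those equations.
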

\begin{proof}
At the chosen point $p$,
a standard computation shows the following equation (here we are using the Einstein summation convention)
\begin{eqnarray}
\label{com-003}
 \ddbar \log\det (g_{\a\bar\b}) (v, \bar v) &=& g_{t\bar t, \a\bar\a} - \sum_{\a,\b} | g_{t\bar\b, \a}|^2 
 \nonumber\\
 &-& g_{t\bar\g} g_{\g \bar t, \a\bar\a}  -  g_{\g\bar t} g_{t\bar\g, \a\bar\a} 
 \nonumber\\
 &+ & R_{\a\bar\g} g_{t\bar\a} g_{\g\bar t}
\end{eqnarray}

Taking the Laplacian $\Delta_{\ep}$ with respect to the metric $\omega_{\ep}$ on the both sides of equation (\ref{com-002}), we have 
\begin{eqnarray}
\label{com-004}
 g_{t\bar t, \a\bar\a} - \ep \Delta_{\ep} (e^{-f_{\ep}}) &=& \sum_{\a, p} |g_{p\bar t, \bar\a}|^2 + \sum_{\b, q} |g_{t\bar q, \bar\b}|^2
 \nonumber\\
 & -& R_{q\bar p} g_{p\bar t} g_{t\bar q} 
 \nonumber\\
& + &   g_{t\bar p} g_{p\bar t, \a\bar\a}  + g_{p\bar t} g_{t \bar p, \a\bar \a}.
\end{eqnarray}
Combing with the two equations above, it turns out that we have 
\begin{equation}
\label{com-005}
 \ddbar \log\det (g_{\a\bar\b}) (v, \bar v) = ||\dbar_X v ||^2_{\ep} + \ep e^{-f_{\ep}} |\nabla_{\ep} f_{\ep}|^2 - \ep e^{-f_{\ep}} ( \Delta_{\ep} f_{\ep} ).
\end{equation}

Furthermore, we can improve the above equality as follows.
First, we observe 
$$  \dbar_X v = - g^{\bar\b \a} g_{t\bar\b,\bar\lambda} d\bar z^{\lambda} \otimes\frac{\d}{\d z^{\a}}. $$
In other words, it can be written in tensors as 
\begin{equation}
\label{com-006}
\dbar_{\lambda} v^{\a} =  -  g_{t\bar\a,\bar\lambda}; \ \ \  \dbar_{\lambda} v^{t}=0,
\end{equation}
at the point $p$. 
Moreover, we have in the time direction
\begin{equation}
\label{com-007}
\dbar_{t} v^{\g} =  -  g_{t\bar t, \bar\g} + g_{t\bar\b}g_{\b\bar\g, \bar t} ; \ \ \  \dbar_{t} v^{t}=0. 
\end{equation}
By differentiating equation (\ref{com-002}) once we get 
\begin{equation}
\label{com-008}
\ep e^{-f_{\ep}} (\d_{\a} f_{\ep}) = g_{t\bar t,\a} - g_{t\bar p} g_{p\bar t, \a}  - g_{p\bar t} g_{t\bar p,\a},
\end{equation}
and similarly 
\begin{equation}
\label{com-009}
\ep e^{-f_{\ep}} (\d_{\bar\b} f_{\ep}) = g_{t\bar t,\bar\b} - g_{t\bar q} g_{q\bar t, \bar\b}  - g_{q\bar t} g_{t\bar q,\bar\b}.
\end{equation}
Hence we have 
\begin{eqnarray}
\label{com-010}
( \ep e^{-f_{\ep}} )^2 |\nabla_{\ep} f_{\ep}|^2 
 &=&g^{\bar\b\a}  \left\{   (  g_{t\bar t,\a} -  g_{p\bar t} g_{\a\bar p,t} ) - g_{t\bar p} g_{p\bar t, \a}   \right\}
 \nonumber\\
&& \cdot  \left\{   (  g_{t\bar t,\bar\b} -  g_{t\bar q} g_{q\bar\b, \bar t}  ) -   g_{q\bar t} g_{t\bar q,\bar\b} \right\}
\nonumber\\
&=& g^{\bar\b\a} ( \d_t v^{\bar\a} - g_{t\bar p} \d_p v^{\bar\a} ) (\dbar_t v^{\b} - g_{q\bar t} \dbar_q v^{\b} )
\nonumber\\
&=& \sum_{\b} |\dbar_t v^{\b} |^2 + g_{t\bar\mu} g_{\lambda\bar t} \d_{\mu} v^{\bar\b} \dbar_{\lambda} v^{\b}
\nonumber\\
&-& g_{\mu\bar t} \d_t v^{\bar\b} \dbar_{\mu} v^{\b} - g_{t\bar\lambda} \dbar_t v^{\b} \d_{\lambda} v^{\bar\b}. 
\end{eqnarray}

On the other side, we can compute the inverse matrix of $\cG_{\ep}$ at the point $p$.
First notice that 
$$\det \cG_{\ep} (p) =  \ep e^{-f_{\ep}(p)},$$
since  $g_{\a\bar\b} = \delta_{\a\b}$ at this point. 
Then a standard calculation shows the following equations.  
$$ \cG_{\ep}^{\bar t t} = (\ep e^{-f_{\ep}})^{-1}; \ \ \  \cG_{\ep}^{\bar t p} = -  (\ep e^{-f_{\ep}})^{-1} g_{t\bar p}, $$
and also 
\begin{equation}
\label{com-011}
\cG_{\ep}^{\bar p q} = \delta_{p q} +    (\ep e^{-f_{\ep}})^{-1} g_{p\bar t} g_{t\bar q} 
\end{equation}
Therefore, the four terms on the RHS of equation (\ref{com-010}) can be re-written as 
\begin{equation}
\label{com-012}
(\ep e^{-f_{\ep}})^{-1} \sum_{\b} |\dbar_t v^{\b} |^2 = \cG_{\ep}^{\bar t t} g_{\a\bar\b}  \dbar_{t} v^{\a} \d_{t} v^{\bar\b};
\end{equation}
\begin{equation}
\label{com-013}
-   (\ep e^{-f_{\ep}})^{-1}   g_{\mu\bar t} \d_t v^{\bar\b} \dbar_{\mu} v^{\b}= \cG_{\ep}^{\bar\mu t} g_{\a\bar\b} \dbar_{\mu} v^{\a} \d_t v^{\bar\b};
\end{equation}
\begin{equation}
\label{com-014}
(\ep e^{-f_{\ep}})^{-1} g_{t\bar\mu} g_{\lambda\bar t} \d_{\mu} v^{\bar\b} \dbar_{\lambda} v^{\b} = \cG_{\ep}^{\bar\lambda\mu} g_{\a\bar\b} \dbar_{\lambda} v^{\a} \d_{\mu} v^{\bar\b}
- g_{\a\bar\b} \dbar_{\lambda} v^{\a} \d_{\lambda} v^{\bar\b}. 
\end{equation}
Combining with equations (\ref{com-010}) - (\ref{com-014}) above, our equation (\ref{com-015}) follows.

\end{proof}

\begin{rem}
This new equality (the second line of equation (\ref{com-015})) is an improvement of what we have obtained in (\cite{CLP}).
It will not be used in the proof of our main theorem. 
However, it reveals the following important fact: the vanishing of the complex hessian $ \ddbar \log\det (g_{\a\bar\b}) (v, \bar v) $
implies the holomorphicity of the vector field $v$, both in $t$ and $X$ directions as $\ep\rightarrow 0$, provided with enough regularities of $\cG$ and $\cG_{\ep}$. 
\end{rem}

\bigskip

\begin{prop}
\label{prop-com-415}
We have the following integral estimate: 
\begin{eqnarray}
\label{com-415}
 && \widetilde\cK'_{\ep, A+1}  (1) -  \widetilde\cK'_{\ep, A+1}  (0) \geq  \int_{P_{\ep, A}} || \dbar v ||^2_{\cG_{\ep}}  i dt\wedge d\bar t\wedge \omega_{\ep}^n
 \nonumber\\
 &\geq&    \int_{P_{\ep, A}}||\dbar_X v ||^2_{\omega_{\ep}}  i dt\wedge d\bar t\wedge \omega_{\ep}^n + \ep  \int_{P_{\ep, A}}  |\nabla_{\ep} f_{\ep}|^2  i dt\wedge d\bar t\wedge \omega^n.
\end{eqnarray}
\end{prop}

\begin{proof}
Combining with Lemma (\ref{lem-comp-001}) and Lemma (\ref{lem-com-001}), 
we can estimate the lower bound of the complex hessian of the energy $\cK_{\ep, A}$ as follows. 

\begin{eqnarray}
\label{com-0150}
&& \left( dd^c \max\left\{   \log\frac{\omega_{\ep}^n }{ \omega^n }, \log\frac{h_A}{\omega^n}      \right\} - Ric\omega \right) \wedge \cG_{\ep}^n + c'_A \cG_{\ep}^{n+1} 
 \nonumber\\
 &\geq&    \chi_{P_{\ep, A-1}} \left(    ||\dbar_X v ||^2_{\omega_{\ep}} + \ep e^{-f_{\ep}} |\nabla_{\ep} f_{\ep}|^2    \right) i dt\wedge d\bar t \wedge \omega_{\ep}^n,
\end{eqnarray}
where $\chi_{P_{\ep, A-1}}$ is the characteristic function of the set $P_{\ep, A-1}$.
In fact, the above inequality (equation \ref{com-0150}) is purely local. 
For points in the open set $P_{\ep, A}$, it directly follows from our previous computations. 
For points in the interior of the complement $( P_{\ep,A})^c $, 
it follows from the equation (\ref{int-006}). 
Finally, for points near the boundary $\d P_{\ep, A}$, the R.H.S of equation (\ref{com-0150})
reduces to zero by our choice of $\chi_{\ep, A-1}$, and then the  inequality is implied by Greene-Wu's theorem again. 

According to equation (\ref{int-005}),
we conclude the following integral estimate
\begin{eqnarray}
\label{com-016}
 && \cK'_{\ep, A+1}  (1) -  \cK'_{\ep, A+1}  (0) + \ep C_{A+1} \geq  \int_{P_{\ep, A}} || \dbar v ||^2_{\cG_{\ep}}  i dt\wedge d\bar t\wedge \omega_{\ep}^n
 \nonumber\\
 &\geq&    \int_{P_{\ep, A}}||\dbar_X v ||^2_{\omega_{\ep}}  i dt\wedge d\bar t\wedge \omega_{\ep}^n + \ep  \int_{P_{\ep, A}}  |\nabla_{\ep} f_{\ep}|^2  i dt\wedge d\bar t\wedge \omega^n.
\end{eqnarray}
Replacing the energy $\cK_{\ep, A}$ by its modified version $\widetilde\cK_{\ep, A}$ 
as in Theorem (\ref{int-thm-001}),
our result follows. 
\end{proof}

Based on this integral estimate, we further obtain the following partial $L^2$ control 
on the gradient of the fiberwise volume element. 

\begin{lemma}
\label{lem-com-002}
Suppose the Mabuchi functional is $\ep$-affine along the geodesic. 
Then there exists a constant $C$ (possibly depending on $t$ and $A$), 
and a subsequence of volume elements $\kf(\vp_{\ep_\l}) $
satisfying 
\begin{equation}
\label{com-020}
 \int_{P_{\ep_\l, A} \bigcap X_{t}} |\nabla \kf (\vp_{\ep_\l}) |^2  \omega^n \leq C,
\end{equation}
for almost everywhere $t\in [0,1]$ and any $\l$ large enough. 

\end{lemma}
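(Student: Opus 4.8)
The plan is to turn the $\ep$-affine hypothesis into a uniform-in-$\ep$ integral bound for the gradient of the fiberwise volume element over the positivity set $P_{\ep,A}$, and then to disintegrate that bound along the fibers by Fatou's lemma. I would begin from the pointwise Bochner-type identity of Lemma (\ref{lem-com-001}), which expresses $\ddbar\log\det(g_{\a\bar\b})(v,\bar v)$ along the kernel field $v$ of $\chi_\ep$ as $||\dbar v||^2_{\cG_\ep}-\ep e^{-f_\ep}(\Delta_\ep f_\ep)$. Inserting this into the positivity estimate on $P_{\ep,A}$ yields the global inequality (\ref{com-0150}), in which the complex Hessian of the truncated entropy integrand is bounded below, on $P_{\ep,A}$, by $||\dbar_X v||^2_{\omega_\ep}+\ep e^{-f_\ep}|\nabla_\ep f_\ep|^2$. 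Pushing this forward by $\int_{X_t}$ and recognizing the result as the distributional second derivative of the modified energy $\widetilde\cK_{\ep,A}$ gives the integral estimate (\ref{com-016}), so that the boundary slope difference $\widetilde\cK'_{\ep,A}(1)-\widetilde\cK'_{\ep,A}(0)$ dominates $\ep\int_{P_{\ep,A}}|\nabla_\ep f_\ep|^2\,i\,dt\wedge d\bar t\wedge\omega^n$ from above.

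Next I would invoke the $\ep$-affine hypothesis through (\ref{com-017}): the slope difference of the unmodified energy is $O(\ep)$, and the correction $2\ep C_A$ is likewise $O(\ep)$, so the left-hand side of (\ref{com-016}) is at most $\ep C'_A$. The essential point is that the explicit factor of $\ep$ in front of the gradient term matches this, so that after discarding the nonnegative term $||\dbar_X v||^2_{\omega_\ep}$ and cancelling $\ep$ one obtains a bound $\int_{P_{\ep,A}}|\nabla_\ep\log\kf(\vp_\ep)|^2\le C'_A$ that is \emph{independent} of $\ep$. I would then pass to the fixed background gradient of $\kf$ itself, as in (\ref{com-018}): on $P_{\ep,A}$ the volume element $\kf(\vp_\ep)$ is bounded above by the uniform $\cC^{1,\bar1}$-constant, so $|\nabla\kf|^2\le C^2|\nabla\log\kf|^2$, and the metric equivalence $\omega_\ep\le c\omega$ replaces $\nabla_\ep$ by $\nabla$ at the cost of a uniform constant. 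This produces the uniform bound $\int_{P_{\ep,A}}|\nabla\kf(\vp_\ep)|^2\,i\,dt\wedge d\bar t\wedge\omega^n\le C''_A$.

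Finally, setting $F_{\ep,A}(t):=\int_{P_{\ep,A}\cap X_t}|\nabla\kf(\vp_\ep)|^2\,\omega^n$, the last bound reads $\int_0^1 F_{\ep,A}(t)\,dt\le C''_A$ uniformly in $\ep$, so Fatou's lemma (\ref{com-019}) gives $\int_0^1\liminf_\ep F_{\ep,A}(t)\,dt\le C''_A$. Hence $\liminf_\ep F_{\ep,A}(t)$ is finite for almost every $t$, and for each such $t$ I would extract a subsequence $\ep_\l$ realizing this finite value, along which $F_{\ep_\l,A}(t)$ stays bounded by some $C_t$; this is exactly the assertion (\ref{com-020}).

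I expect the main obstacle to be the second step, namely the bookkeeping that makes the cancellation of $\ep$ legitimate: one must verify that every constant in the chain ($c_A$, $c'_A$, $C_A$, $C'_A$, the upper bound of $\kf$, and the metric-equivalence constant $c$) is uniform in $\ep$, which ultimately rests on the uniform $\cC^{1,\bar1}$-bounds for $\cG_\ep$ together with the fact that $\kf$ stays bounded below on $P_{\ep,A}$, so that the Ricci trace $\tr_{\omega_\ep}Ric\omega$ remains controlled there. A secondary subtlety is that the subsequence produced at the end a priori depends on $t$, so the estimate (\ref{com-020}) is claimed only pointwise for almost every $t$ rather than along a single fixed subsequence; the measure-theoretic conclusion itself is routine once the uniform bound is in hand.
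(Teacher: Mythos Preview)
Your proposal is correct and follows the paper's own argument essentially step for step: the Bochner identity from Lemma~(\ref{lem-com-001}) feeds into (\ref{com-0150}) and then (\ref{com-016}), the $\ep$-affine hypothesis cancels the factor $\ep$ via (\ref{com-017}), the chain (\ref{com-018}) converts to the background gradient, and Fatou's lemma (\ref{com-019}) disintegrates the bound fiberwise. Your identification of the key point (uniformity in $\ep$ of all constants, resting on the $\cC^{1,\bar1}$-bound and the lower bound of $\kf$ on $P_{\ep,A}$) and of the dependence of the subsequence on $t$ matches the paper precisely.
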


\begin{proof}
Suppose the Mabuchi functional is $\ep$-affine along the geodesic, 
and then equation (\ref{eaf-002}) implies 
\begin{eqnarray}
\label{com-017}
 \widetilde\cK'_{\ep, A+1}  (1) -  \widetilde\cK'_{\ep, A+1}  (0)  &= & \cK'_{\ep}(1) - \cK'_{\ep}(0) + 2\ep C_{A+1}
 \nonumber\\
 &\leq & \ep C'_A,
\end{eqnarray}
for some constant $C'_A$.
Therefore, we conclude the following estimate from equations (\ref{com-016}) and (\ref{com-017}).

\begin{eqnarray}
\label{com-018}
\int_{P_{\ep, A}} |\nabla \kf (\vp_{\ep}) |^2 i dt\wedge d\bar t\wedge \omega^n &\leq & C^2 \int_{P_{\ep, A}}  \frac{ |\nabla \kf (\vp_{\ep}) |^2}{\kf^2(\vp_\ep)} i dt\wedge d\bar t\wedge \omega^n
\nonumber\\
& \leq & C^2 \int_{P_{\ep, A}}  |\nabla \log \kf(\vp_\ep) |^2  i dt\wedge d\bar t\wedge \omega^n
\nonumber\\
& \leq & C'  \int_{P_{\ep, A}}  |\nabla_{\ep} \log \kf(\vp_{\ep}) |^2  i dt\wedge d\bar t\wedge \omega^n \leq C''_A,
\end{eqnarray}
where we used the fact $f_{\ep} = \log \kf(\vp_{\ep})$, and $\omega_{\ep}$ is uniformly bounded from the above.  
Moreover, if we take its fiberwise integral as 
$$F_{\ep,A}(t):  = \int_{P_{\ep, A} \bigcap X_{t}} |\nabla \kf (\vp_{\ep}) |^2  \omega^n, $$
then Fatou's lemma implies 

\begin{equation}
\label{com-019}
\int_0^1 \liminf_{\ep} F_{\ep, A} (t) dt \leq \liminf_{\ep} \int_0^1 F_{\ep,A} (t) dt \leq C''_A.
\end{equation}
Therefore, for almost everywhere $t\in [0,1]$, there exist a constant $C_{t}$
and a subsequence $ F_{\ep_\l, A} $ such that 
$$\lim_{\l\rightarrow +\infty} F_{\ep_\l, A} (t)\leq C_{t}. $$
Hence the result follows. 
\end{proof}

\begin{rem}
\label{rem-ch-001}
Let $\vp_{\ep_\l}$ be such a subsequence picked up as in the Lemma (\ref{lem-com-002}). 
we still have uniform $C^{1,\alpha}$-convergence of $\vp_{\ep_\l}$ to the fiberwise geodesic potential $\vp$. 
Then the same argument as in Remark (\ref{rem-int-002}) implies that 
its volume element $\kf(\vp_{\ep_{\l}})$ actually converges to $\kf(\vp)$
in the weakly $L^2$-norm, possibly after passing to a further subsequence. 
Thank to Theorem (\ref{thm-l2-001}), we can conclude 
that $\kf(\vp_{\ep_\l}) \rightarrow \kf(\vp)$ in the strong $L^2$-norm on $X_t$ in the same time. 
\end{rem}

\section{ The uniform Positive lower bound }
To deal with the non-degeneracy of the fiber-wise volume element of $\cG$,
we would like to utilise 
the partial $W^{1,2}$-estimate obtained in equation (\ref{com-020}) and the $L^2$-convergence of the volume elements.
However, the difficulty is that the integral on the LHS of this equation is not taken on the whole manifold $X$,
and the integration domain varies with respect to $\ep$ and $A$. 
In order to overcome this difficulty, we first investigate a local model as follows. 

Suppose $f_{\l}$ is a sequence of positive smooth  functions on the domain $D: = [0,1]^m \subset \mathbb{R}^m$
with uniformly bounded $L^{\infty}$-norm,
and $f$ is an $L^{\infty}$ non-negative function on $D$ such that 
$f_\l$ converges to $f$ in $L^2$-norm.
We further assume that 
the function $f$ satisfies the gap phenomenon, namely, 
there exists a constant $\k_0 >0$ such that we have 
$$ \{ f> \k_0\} \bigcup \{ f =0 \} = D, $$
up to a set of measure zero.
In the following, we set  
$$ P: = \{ f >  \k_0 \}; \ \ \    P^c: = \{ f =0 \}. $$
Let $\k$ denote a positive continuous function on $D$ and we set
$$ P_{\l, \k}: = \{ x\in D; \ \ \  f_\l(x) > \k \}. $$
Then the following result is crucial. 

\begin{prop}
\label{prop-nd-001}
Suppose we have $\mu(P) > 0$ and $\max \k < \k_0 /4$ on $D$,
and assume that there exists a constant $C_1 >0$, such that the following estimate holds 
\begin{equation}
\label{nd-001}
\int_{P_{\l, \k}} |\nabla f_\l |^2  < C_1,
\end{equation}
for a fixed $\k$ and all $\l$ large enough. 
Then $f > \k_0$ almost everywhere on $D$.

\end{prop}

First we will prove that Proposition (\ref{prop-nd-001})
holds in $\mathbb{R}$, namely, we first assume $D = [0,1]$, 
and then the following fact is clear by H\"older's inequality.

\begin{lemma}
\label{lem-nd-001}
Suppose $u$ is a smooth positive function on $[0,1]$, and we assume 
that $ u(a) > 2k $ and $u(b) < k $ for some $0\leq a < b \leq 1$ and a constant $k >0$.  
Then we have 
$$ |b -a | > \frac{k^2}{ \int_a^b ( u'(t) )^2 dt }. $$

\end{lemma}

Suppose $E_1, E_2 $ are two subsets of $D$, 
and we denote $d(E_1, E_2)$ by the distance between them 
$$d(E_1, E_2): = \inf_{x_1\in E_1, x_2\in E_2} d (x_1, x_2).  $$
Then we have  the following observation. 

\begin{lemma}
\label{lem-nd-0010}
Suppose $P, Q$ are two non-empty disjoint subsets of the interval $[0,1]$.
Assume that the union of $P, Q$ is the whole interval $[0,1]$ up to a set with measure zero.
Then we have $d(P, Q)=0$.
\end{lemma}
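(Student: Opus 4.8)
The plan is to argue by contradiction: suppose $d(P,Q) = \delta > 0$ and derive a violation of the measure hypothesis $\mu\big([0,1]\setminus(P\cup Q)\big) = 0$. The whole point of assuming $d(P,Q)>0$ is that it forces $P$ and $Q$ into separated neighbourhoods, and I want to exhibit a genuine open gap between them --- a gap that, by connectedness of $[0,1]$, cannot be avoided and must therefore carry positive measure.

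First I would introduce the function $g(x) := d(x,P) - d(x,Q)$ on $[0,1]$, where $d(x,S) := \inf_{s\in S}|x-s|$. Since each map $x\mapsto d(x,S)$ is $1$-Lipschitz, $g$ is continuous. Picking $p\in P$ and $q\in Q$ (both sets being nonempty, and $p\neq q$ since they are disjoint), I get $g(p) = -d(p,Q) \le -\delta < 0$ and $g(q) = d(q,P) \ge \delta > 0$, because $d(p,Q),\,d(q,P)\ge d(P,Q)=\delta$. The Intermediate Value Theorem then produces a point $c$ lying between $p$ and $q$ with $g(c)=0$, that is $d(c,P) = d(c,Q) =: r$.

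Next I would pin down that $r\ge \delta/2$. Indeed, for any $\eta>0$ there are $p'\in P$ and $q'\in Q$ with $|c-p'| < r+\eta$ and $|c-q'| < r+\eta$, whence $|p'-q'| < 2r+2\eta$; letting $\eta\to 0$ and using $|p'-q'|\ge\delta$ gives $2r\ge\delta$. Consequently the open interval $(c-r,\,c+r)$ meets neither $P$ nor $Q$, since any point within distance $<r$ of $c$ cannot belong to either set by the definition of $r$ as the common distance from $c$. Hence $(c-r,c+r)\cap[0,1]$ is contained in the null set $[0,1]\setminus(P\cup Q)$. But $c\in[0,1]$ and $r\ge\delta/2>0$, so this relatively open interval has positive Lebesgue measure, contradicting the hypothesis. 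Therefore $d(P,Q)=0$.

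The main obstacle --- really the only delicate point --- is ensuring that the equidistance radius $r$ is bounded below rather than merely positive; this is what converts the qualitative fact that $c$ is equidistant from $P$ and $Q$ into a quantitative open gap of definite size, and it is precisely where the assumption $d(P,Q)=\delta>0$ is genuinely consumed. Everything else (continuity of $g$, the sign computation at $p$ and $q$, the IVT, and the final measure contradiction) is routine once $r\ge\delta/2$ is secured.
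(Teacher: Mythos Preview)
Your proof is correct and complete. The argument via the continuous function $g(x)=d(x,P)-d(x,Q)$ and the Intermediate Value Theorem is clean, and the key step---showing that the equidistance radius $r$ satisfies $r\ge\delta/2$ by the triangle inequality---is exactly what is needed to exhibit an open interval of definite positive measure inside the exceptional set.

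The paper takes a different route. Assuming $d(P,Q)>\delta>0$, it fattens $P$ and $Q$ into open neighborhoods $\mathcal{U}=\bigcup_{p\in P}(p-\tfrac{1}{m},p+\tfrac{1}{m})$ and $\mathcal{V}=\bigcup_{q\in Q}(q-\tfrac{1}{m},q+\tfrac{1}{m})$ with $\tfrac{1}{m}\ll\delta$; these are disjoint by the distance assumption. It then argues that the null exceptional set $E=[0,1]\setminus(P\cup Q)$ must be absorbed into $\mathcal{U}\cup\mathcal{V}$ (otherwise some point of $E$ would be far from both $P$ and $Q$, forcing a whole subinterval into $E$ and contradicting $\mu(E)=0$), so that $\mathcal{U}$ and $\mathcal{V}$ give a nontrivial disconnection of $[0,1]$. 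Both proofs ultimately exploit the connectedness of $[0,1]$: the paper invokes it directly as a topological obstruction to a two-piece open cover, while you invoke it through the IVT. Your version has the advantage of being shorter and of directly producing the gap with an explicit lower bound on its size; the paper's version is perhaps more obviously generalizable to other connected spaces where one might not have a convenient real-valued ordering for the IVT.
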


\begin{proof}
We will prove by contradiction. 
Suppose the distance between the two sets is positive as 
$$  d(P, Q)  > \delta >0. $$ 


\bigskip

Take a large number $m\in \mathbb{Z}$ satisfying $ \frac{1}{m} < \frac{\delta}{100} $.
For each point $p\in P$, we define the following open interval as 
$$ \mathcal{I}_{p,m}: = ( p - \frac{1}{m}, p+ \frac{1}{m}), $$
and similarly for each point $q\in Q$
$$ \mathcal{I}_{q, m} :=  (q- \frac{1}{m}, q+ \frac{1}{m}). $$
We note that $\mathcal{I}_{p,m}$ is disjoint from the set $Q$ for each $p\in P$,
and $\mathcal{I}_{q,m}$ is disjoint from the set $P$ for each $q\in Q$ from our constructions. 
In fact, we have 
$$ \mathcal{I}_{p,m} \bigcap   \mathcal{I}_{q, m} = \emptyset, $$  
for all $p\in P, q\in Q$. 
Therefore, the following two unions 
$$ \mathcal{U}: = \bigcup_{p\in P} \mathcal{I}_{p,m},\ \ \ \mathcal{V}: = \bigcup_{q\in Q} \mathcal{I}_{q,m}$$
are mutually disjoint open subsets of the interval $[0,1]$. 

Moreover, there is a subset $E\subset [0,1]$ with measure zero satisfying 
$$P\bigcup Q \bigcup E = [0,1]. $$
We claim that $E$ is contained in the union $\mathcal{U}\bigcup \mathcal{V}$. 
Otherwise, there is a point $a\in E$ such that we have 
$$ d(\{a\}, P) > \frac{1}{2m}, \ \ \ d(\{a\}, Q) > \frac{1}{2m}. $$
Then the open interval $(a- \frac{1}{2m}, a + \frac{1}{2m})$ must be contained in $E$,
but this is impossible since $E$ has measure zero. 
Therefore, we conclude that the union $\mathcal{U}\bigcup \mathcal{V}$ is exactly the interval $[0,1]$,
which is impossible since $[0,1]$ is a connected set.

\end{proof}

After passing to a subsequence, we can further  assume that $f_\l $ converges to $f$ almost everywhere
on $[0,1]$ due to the $L^2$ convergence (see Remark (\ref{rem-l2-0022})). 
Therefore, there is a subset $E\subset [0,1]$ with measure zero such that 
$f_\l \rightarrow f$ in the poinwise sense outside of $E$. 
Then we are ready to prove the $1$-dimensional case as follows.
 
\begin{lemma}
\label{lem-nd-002}
If we have $n=1$, then Proposition (\ref{prop-nd-001}) holds.  
\end{lemma}
\begin{proof}
We will prove by contradiction. Suppose that $P^c$ has a positive measure. 
Thanks to Lemma (\ref{lem-nd-0010}), we observe that the following two non-empty sets are not separable 
in distance, namely, we have 
$$ d(P- E, P^c - E)  = 0.  $$

Then there exists a sequence of pairs $(a_j, b_j)\in ( P-E ) \times (P^c -E) $
such that $|a_j - b_j| \rightarrow 0 $ as $j\rightarrow \infty$. 
Without loss of generality, we assume $a_j < b_j$ in the following. 
Now fix an arbitrary pair $ (a_j, b_j) $. 
Due to the pointwise convergence of $f_{\l}$ to $f$ outside of $E$, 
we have 
$$f_\l (a_j) > \frac{3\k_0}{4}; \ \ \ \     f_{\l} (b_j) < \frac{\k_0}{4}, $$
for all $\l$ large enough, 
Therefore, there exists a point $c_j$ (possibly depends on $\l$) in the interval $[a_j, b_j]$ such that $f_\l (c_j) = \frac{\k_0}{2}$ by the continuity of $f_{\l}$.
Moreover, the point $c_j$ can be chosen close enough to $a_j$ such that we have  
\begin{equation}
\label{nd-0010}
 f_\l|_{ [ a_j, c_j ]} > \frac{\k_0}{4},
 \end{equation}
since the first derivative $f'_\l$ is bounded.
Hence we have $ [a_j, c_j]\subset P_{\l, \k}$ as $\max\k < \k_0 /4$.  
Then the following inequality holds 
by Lemma (\ref{lem-nd-001}) and equation (\ref{nd-001})
\begin{equation}
\label{nd-002}
|b_j - a_j | > |c_j - a_j | \geq  \frac{\k^2_0}{16C_1}.
\end{equation}

However, this contradicts to the fact that $d(a_j, b_j)\rightarrow 0$ as $j\rightarrow \infty$, 
and then our result follows.

\end{proof}

\begin{proof}
[Proof of Proposition (\ref{prop-nd-001})]
We will use induction on the dimension $m$. 
Lemma (\ref{lem-nd-002}) implies that the result is true for $m=1$, 
and we assume that Proposition (\ref{prop-nd-001}) holds on $\mathbb{R}^{m-1}$ for some integer $m\geq 2$.

Write the coordinate $x\in [0,1]^m$ as 
$ (x_1,\cdots, x_{m-1}, x_m) = (x', y ) $, where $x' = (x_1, \cdots, x_{m-1})$ and $y = x_m$.
Denote $X_y$ the $(m-1)$-dimensional slice by 
$$ X_y: = \{ x \in [0,1]^m; \ \ \ x = (x', y)  \}.$$
Consider the $L^2$ difference of $f_\l$ and $f$ on each slice as 
$$ F_{\l}(y): = \int_{X_y} |f_\l - f|^2 dx' $$
Then the $L^2$-convergence of $f_\l -f$ on $[0,1]^m$ 
implies that $F_{\l}$ converges to zero 
in the $L^1$-norm on $[0,1]$, 
and then it converges in measure. 
By the Riesz-Lebesgue Theorem, it follows that 
there exists a subsequence $F_{\l_k}$ such that
$$ \lim_{\l_k\rightarrow \infty } F_{\l_k} (y) \rightarrow 0, $$
for almost everywhere $y\in [0,1]$.
We emphasis that this subsequence does not depend on $y$. 
Therefore, after re-writing the sub-index, 
we can assume


\begin{equation}
\label{nd-0025}
 || f_\l - f ||_{L^2(X_y)} \rightarrow 0, \ \ \ \ \l \rightarrow \infty
 \end{equation}
for almost everywhere $y\in [0,1]$.
Moreover, 
equation (\ref{nd-001}) can be written as

\begin{equation}
\label{nd-003}
\int_0^1 \left( \int_{X_y\bigcap P_{\l, \k}} |\nabla f_\l |^2 dx'  \right) dy < C_1
\end{equation}
Then by Fatou's lemma (as we have argued in Lemma (\ref{lem-com-002})), 
there exists a constant $C_y > 0 $ and a subsequence $f_\l$,  possibly depending on $y$, satisfying 
  
\begin{equation}
\label{nd-003}
 \int_{X_y\bigcap P_{\l, \k}} |\nabla_{x'} f_\l |^2  \leq \int_{X_y\bigcap P_{\l, \k}} |\nabla f_\l |^2 dx'  < C_y,
\end{equation}
for almost everywhere $y\in [0,1]$ and all $\l$ large.

Furthermore, the gap phenomenon (either we have $f> \k_0$ or $f =0$ almost everywhere )
must be satisfied on $X_y$ for almost everywhere $y \in [0,1]$. 

Up to this point, we have picked up a general fiber $X_y$ and a subsequence $f_{\l}$ (possibly depends on $y$)
such that the new sequence  $f_{\l}|_{X_y}$ satisfies
all conditions in Proposition (\ref{prop-nd-001}),  possibly except $\mu(P\bigcap X_y) > 0$. 
Then for almost everywhere $y\in [0,1]$, from our induction hypothesis, 
we conclude that either one of the following two cases happens: 

\begin{enumerate}

\item[(1)]

$f> \k_0$ 
almost everywhere on $X_y$;

\smallskip

\item[(2)]
$f = 0$ 
almost everywhere on  $X_y$.

\end{enumerate}
\bigskip

In fact, we claim that the above $\mathbf{Case\ (1)}$ occurs for almost everywhere $y\in [0,1]$,
and then our result follows. 
Otherwise,  the following set  
$$ S: = \{ y\in [0,1];\ \ \  f = 0 \ \ \emph{a.e. on $X_y$}       \}, $$
will have positive measure on $[0,1]$. 
Therefore, the following set has positive measure on $[0,1]^m$. 
$$ \mathcal{S}: =   \bigcup_{y \in S}  X_{y}  $$

Now, 
switch the directions and take another slicing as 
$$ x: = ( x_1, x_2, \cdots, x_m)  = ( y', x'' ),$$
where $x'' = (x_2, \cdots, x_m)$. 
Repeating our previous argument on this new slicing, 
we can also conclude that  for almost everywhere $y' \in [0,1]$, 
either $f> \k_0$ or $f = 0$ as an $L^{\infty}$ function on the slice $X_{y'}$.

Next, we consider the following subset on each slice $X_{y'}$ as 
$$ \mathcal{S}_{y'}: = \bigcup_{y \in S}\left( X_{y'} \bigcap  X_y \right). $$
Thanks to Fubini's Theorem, we have 
$$ \mu(\mathcal{S}) = \int_0^1 \mu(\mathcal{S}_{y'}) dy' > 0. $$
However, the measure $\mu(\mathcal{S}_{y'})$ on $[0,1]^{m-1}$
is actually the same for different $y' \in [0,1]$, 
and then itself must be positive too. 

In other words, 
the set $ P^c \bigcap X_{y'} $ has positive measure for each $y' \in [0,1]$. 
Therefore, we conclude that $f = 0$ almost everywhere on $X_{y'}$ for a general point $y' \in [0,1]$.
This contradicts to the fact that $\mu(P) > 0$, and our claim follows.

\end{proof}

Now we are going to prove the main theorem.

\begin{proof}
[Proof of Theorem (\ref{thm-ch-001})]

For almost everywhere $t\in [0,1]$, 
we fix a fiber $X_t$ such that the estimate in Lemma (\ref{lem-com-002}) holds for a sequence
$ \kf(\vp_{\ep_\l})$.
Then we will prove that the volume element $ \kf (\vp) = \omega_{\vp}^n / \omega^n$
is bounded below by the gap $\k_0$ on $X_t$ as an $L^{\infty}$ function. 
Hence the restriction of the geodesic $\cG|_{X_t}$,
as a metric on the fiber, must have a lower bound 
determined by $\k_0$ and its uniform upper bound. 

We will prove by contradiction too. 
Suppose the set $ P^c = \{ \kf(\vp) =0 \}$ has a positive measure on $X_t$.
 As a compact connected complex manifold, the fiber $X_t$ has an open covering by holomorphic coordinate charts, 
namely, we have 
$$ X_t = \bigcup_{ j =1}^N U_j, $$
and the local trivialisation map $\pi_j: B_j \rightarrow U_j$,
where $B_j \subset \mathbb{C}^n $ is an open ball. 
Without loss of generality, we assume that each ball $B_j$ is centred at the origin of $\mathbb{C}^n$
and has radius larger than $2$.
In fact, we can further assume that the manifold $X_t$ is covered by the union of $V_j \subset U_j$, 
where $V_j $ denotes the open set 
$$ V_j: = \pi_j (B),  $$
for the unit ball $B\subset \mathbb{C}^n$. 
Therefore, there exists at least one $j$, such that we have on the coordinate $U_{j}$
\begin{equation}
\label{nd-004}
\mu\left( P^c \bigcap V_{j} \right) >0.  
\end{equation}
On the other hand, there also exists at least one $i$, such that we have on the coordinate $U_{i}$
\begin{equation}
\label{nd-0041}
\mu\left(  P \bigcap V_{i} \right) >0.  
\end{equation}
Denote $\mathcal{J}_1$ by the non-empty collection of all such $j$'s satisfying equation (\ref{nd-004}),
and $\mathcal{J}_2$ the non-empty collection of all such $i$'s satisfying equation (\ref{nd-0041}).
We claim that $\mathcal{J}_1\bigcap \mathcal{J}_2 \neq \emptyset$. 

Suppose not. Then we can take two non-empty open sets as 
$$ \mathcal{U}: =  \bigcup_{j\in \mathcal{J}_1} V_j; \ \ \ \ \mathcal{V}: =  \bigcup_{i\in\mathcal{J}_2}  V_i,$$
and we have $\kf = 0$ pointwise a.e. on $\cU$ and $\kf  > \k_0$ pointwise a.e. on $\cV$.
This implies that 
the intersection of the two open sets is empty. 
Therefore, we have 

\begin{equation}
\label{nd-006}
\cU\bigcap \cV = \emptyset; \ \ \  \cU \bigcup \cV = X_t,
\end{equation}
but this contradicts to the connectness of $X_t$, and our claim follows.

Now pick up such a $j' \in \mathcal{J}_1 \bigcap \mathcal{J}_2$, and then we choose
a subsequence $f_{\l}: = \kf(\vp_{\ep_\l})$ of the volume element as in Lemma (\ref{lem-com-002}) and Remark (\ref{rem-ch-001}).  
Take their restriction to the following domain as 
$$ f_\l: = \kf (\vp_{\ep_\l})|_{D}, \ \ \   f : = \kf(\vp)|_{D},  $$
where $D$ is the $m^{th}$-cube $[-1,1]^m $ contained in $U_{j'}$, and it follows $V_{j'}\subset D$. 

Recall that $P_{\ep, A}$ is the subset of $\Gamma\times X$,  where $e^{f_{\ep}}$ is larger than the function $\frac{e^{\chi -A }}{\omega^n}$,
or equivalently, 
$$ P_{\ep, A}\bigcap X_t = \left\{  x\in X_t;\ \ \   \kf(\vp_{\ep} (t, x)) > \k       \right\}, $$
where $\k = e^{\chi - A} / \omega^n $ is the auxiliary function (equation (\ref{kappa})).
When the constant $A$ is large enough, we can assume $\max_{X_t}\k < \k_0 /4$ for all $t\in [0,1]$. 
Then we put 
$$ P_{\l, \kappa}: =  P_{\ep_{\l}, A}\bigcap X_t, $$
and then the $L^2$-gradient estimate as in equation (\ref{nd-001}) holds for such $f_{\l}$ on $P_{\l, \k}$,
based on the result in Lemma (\ref{lem-com-002}). 

Up to this stage, we observe that 
all the conditions in Proposition (\ref{prop-nd-001})
are satisfied on this domain $D$ for the sequence $f_\l$ with the function $f$. 
Therefore, we conclude that $f > \k_0$ almost everywhere on $D$.
However, this contradicts to the fact that $\{ f =0  \}$ has positive measure on $V_{j'} \subset D$,
and our main result follows.

\end{proof}

\section{Applications}
As one application of the main result (Theorem (\ref{thm-ch-001})), 
we will prove that the Mabuchi functional $\cK_{\ep}: = \cM(\vp_{\ep})$ along the $\ep$-geodesic 
converges to the Mabuchi functional $\cM(\vp)$ along the geodesic, 
not only pointwise on each fiber (Corollary (\ref{cor-ae-001})), but also in its complex Hessian.

For the first step, we will take a closer look at equation (\ref{com-002})  as follows.
Recall that the $\ep$-geodesic $\cG_{\ep}$ is uniformly $\cC^{1,\bar 1}$ on $\Gamma\times X$.
Therefore, we have 

\begin{equation}
\label{app-001}
C_2 >  g_{t\bar t} - g^{\bar\b\a} g_{\a\bar t} g_{t\bar\b} = \frac{\ep\omega^n}{ \omega_{\ep}^n },
\end{equation}
for some uniform constant $C_2$. 
For the same reason, 
the eigenvalues of $\omega_{\ep}$ are bounded from above by a uniform constant $C_0 >0$. 
Moreover, at the normal coordinate of a point $p\in\Gamma\times X$, we can write 
$$ \omega = \sum_{ j=1}^n dz^j \wedge d\bar z^j; \ \ \     \omega_{\ep} = \sum_{j=1}^n \lambda_j  dz^j \wedge d\bar z^j. $$
where $0 < \lambda_1 \leq \cdots \leq \lambda_n$ 
are the $n$ eigenvalues of the metric $\omega_{\ep}$ at this point. 
Hence we have the following inequality. 
\begin{eqnarray}
\label{app-002}
 \tr_{\omega_{\ep}} \omega & =& \sum_{k=1}^n \frac{1}{\lambda_i} 
\nonumber\\
&\leq& \frac{n \lambda_2\cdots \lambda_n}{\lambda_1 \cdots \lambda_n}
\nonumber\\
&\leq & \frac{n C_0^{n-1}}{\det(g_{\a\bar\b})} \leq \frac{nC_2 C_0^{n-1}}{\ep}.
\end{eqnarray}
Then we can obtain a better estimate on the R.H.S. of (\ref{com-0022}) as follows.
\begin{eqnarray}
\label{app-003}
&&  \ep \int_{ \{Ric\omega>0\}} (  \tr_{\omega_{\ep}}Ric\omega ) \omega^n 
 \nonumber\\
 &\leq&  \ep C \int_{ \{Ric\omega>0\}}   ( \tr_{\omega_{\ep} }\omega )\omega^n
 \nonumber\\
& =&  \ep C \int_{P_{\ep, A_1} \bigcap  \{Ric\omega>0\} } ( \tr_{\omega_{\ep} }\omega )\omega^n  + \ep C \int_{(P_{\ep, A_1})^c\bigcap \{Ric\omega>0\} } ( \tr_{\omega_{\ep} }\omega )\omega^n 
 \nonumber\\
& \leq &  \ep C \int_{P_{\ep, A_1} } ( \tr_{\omega_{\ep} }\omega )\omega^n  + C_4 \mu \left( \{ \kf(\vp_{\ep}) \leq \k_1 \} \right)
\nonumber\\
&\leq & \ep C_{A_1} + C_4 \mu \left( \{ \kf(\vp_{\ep}) \leq \k_1 \} \right), 
\end{eqnarray}
where we have used equation (\ref{app-002}) on the third line of the above equation,
and $A_1$ is a fixed constant large enough such that 
$$\max_{X_t} \k_1 < \k_0/4. $$
\begin{rem}
\label{rem-app-001}
We claim that the R.H.S. of equation (\ref{app-003}) converges to zero as $\ep\rightarrow 0$,
after passing to a subsequence.  
In fact, the first factor converges to zero since $A_1$ is fixed.
Moreover, Theorem (\ref{thm-ch-001}) implies that the measure $ \mu( \{ \kf(\vp_{\ep}) \leq \k_1 \} )$
converges to zero, possibly after passing to a subsequence.
Therefore, the second factor converges to zero too. 
\end{rem}

Therefore, we conclude the following result.

\begin{theorem}
\label{thm-app-001}
Suppose the Mabuchi functional is $\ep$-affine along a geodesic. 
Then for almost everywhere $t\in[0,1]$, we have on the fiber $X_t$
$$ \limsup_{\ep\rightarrow 0} \cK''_{\ep}(t) \geq 0.  $$ 
In particular, we have the following convergence of the $L^2$-norms 

\begin{equation}
\label{app-005}
\int_{X_t} || \dbar_X v ||_{\omega_{\ep}}^2 \omega_{\ep}^n, \int_{X_t} || \dbar v ||_{\cG_{\ep}}^2 \omega_{\ep}^n \rightarrow 0,
\end{equation}
as $\ep\rightarrow 0$ for this subsequence. 
\end{theorem}
\begin{proof}

Recall that the lower bound of $dd^c \cK_{\ep,}$ can be estimated 
by computing  the factor 
$$ ( dd^c f_{\ep} - Ric\omega)  \wedge \cG_{\ep}^{n}. $$
Repeat the steps  in Lemma (\ref{lem-comp-001}),
but we stop at equation (\ref{com-0022}) this time. 
Together with Lemma (\ref{lem-com-001}), we obtain as before

\begin{eqnarray}
\label{app-0041}
&& \left( dd^c  \log\frac{\omega_{\ep}^n }{ \omega^n } - Ric\omega \right) \wedge \cG_{\ep}^n 
 \nonumber\\
 &\geq&     \left(    ||\dbar_X v ||^2_{\omega_{\ep}} + \ep e^{-f_{\ep}} |\nabla_{\ep} f_{\ep}|^2    \right) i dt\wedge d\bar t \wedge \omega_{\ep}^n
 \nonumber\\
 & - &  \chi_{ \{ Ric\omega >0 \} } ( \ep \tr_{\omega_{\ep}} Ric\omega ) \sqrt{-1} dt\wedge d\bar t \wedge \omega^{n}.
\end{eqnarray}

Next we perform the integration on an arbitrary interval $[ t_1, t_2]\subset[0,1]$. 
Then the new estimate (equation (\ref{app-003})) shows the following inequality

\begin{eqnarray}
\label{app-016}
&& \cK'_{\ep}  (t_2) -  \cK'_{\ep}  (t_1) 
\nonumber\\
& \geq&  - (t_2- t_1) \left( \ep C_{A_1} + C_4 \mu \left( \{ \kf(\vp_{\ep}) \leq \k_1 \} \right) \right). 
\end{eqnarray}
Therefore, we further obtain 
$$ \cK''_{\ep}(t) \geq   - \ep C_{A_1} - C_4 \mu \left( \{ \kf(\vp_{\ep}) \leq \k_1 \} \right). $$
As in Remark (\ref{rem-app-001}),
the R.H.S. of the above equation converges to zero, possibly after passing to subsequence. 
Then our first result follows. 

Finally, our $L^2$-gradient estimates (equation (\ref{app-005})) also follows from 
an integral estimate based on equation (\ref{app-0041}) and the $\ep$-affine condition on $\cK_{\ep}$ (compare with Proposition (\ref{prop-com-415})). 



\end{proof}

By utilizing the estimate in the above Theorem, i.e. $\dbar v_{\ep} \rightarrow 0$ in the $L^2$ sense, one may expect that 
$v_{\ep}$ would converge to a vector field $v_{\infty}$, and this limit $v_{\infty}$ is holomorphic on $\Gamma\times X$. 
However, this is still unclear to us since the fiberwise volume element $\omega_{\ep}^n$ may not converge uniformly to the 
volume element $\omega^n_{\vp}$ of the geodesic $\cG$. 
Up to this stage, we can only conclude the holomorphicity of $v_{\infty}$ under some special cases. 

\subsection{Special cases}
Suppose the two boundaries $\vp_0, \vp_1$ of $\cG$ are both non-degenerate energy minimizers of $\cM$. 
Then $\cM(\vp_t)$ keeps to be a constant along this geodesic, and it satisfies the $\ep$-affine condition automatically.
Therefore, our Theorem (\ref{thm-ch-001}) implies that the geodesic  $\cG$ is fiberwise uniformly non-degenerate, 
and then the regularities of $\cG$ can be improved by the work of He-Zeng(\cite{HZ}). 
Then we recover one of our result in \cite{LL}.

\begin{theorem}[L.]
\label{thm-app-002}
Suppose the two boundaries of a $C^{1,\bar 1}$-geodesic $\cG$ are both non-degenerate energy minimizers of $\cM$.
Then the geodesic $\cG$ is generated by a holomorphic vector field. 
\end{theorem}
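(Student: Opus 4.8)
The plan is to verify that the hypotheses place us inside the scope of Theorem (\ref{thm-ch-001}), to convert the resulting fiberwise non-degeneracy into genuine regularity of $\cG$, and then to invoke the classical smooth theory. The first step is to observe that $\cM$ is in fact \emph{constant} along $\cG$. Since $\vp_0$ and $\vp_1$ both minimize $\cM$, they attain the common value $m := \min_{\cH}\cM$, so $\cK(0) = \cK(1) = m$. By Theorem (\ref{thm-pre-0000}) the function $\cK(t) = \cM(\vp_t)$ is convex on $[0,1]$, whence $\cK(t) \leq m$ there; since $m$ is the global minimum we also have $\cK(t) \geq m$, and therefore $\cK \equiv m$. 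In particular $\cK$ is linear with slope $k = 0$.

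Next I would check the $\ep$-affine condition of Definition (\ref{def-ch-002}). A non-degenerate energy minimizer of $\cM$ is a cscK metric, so $R_{\vp_0} = R_{\vp_1} = \underline{R}$. By equation (\ref{eaf-003}), in a neighbourhood of each boundary one has
\[
\cK'_{\ep}(t) = -\int_X \dot{\vp}_{\ep}\,(R_{\vp_\ep} - \underline{R})\,\omega_{\vp_\ep}^n ,
\]
and at $t = 0,1$ the $\ep$-geodesic coincides with $\cG$ by its boundary conditions, so $\omega_{\vp_\ep}$ equals $\omega_{\vp_0}$ (resp. $\omega_{\vp_1}$) and $R_{\vp_\ep} - \underline{R} = 0$ there. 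Hence $\cK'_{\ep}(0) = \cK'_{\ep}(1) = 0$, which gives $\cK'_{\ep}(1) - \cK'_{\ep}(0) = 0 = O(\ep)$. Together with the constancy established above, this shows that $\cM$ is $\ep$-affine along $\cG$.

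With the $\ep$-affine condition verified, Theorem (\ref{thm-ch-001}) yields a uniform constant $\k_1 > 0$ such that $\cG|_{X_t} > \k_1\omega$ for almost every $t \in [0,1]$. I would then combine this uniform fiberwise non-degeneracy with the smoothness and non-degeneracy of the boundary data $\vp_0,\vp_1$ and feed it into the regularity theory of He--Zeng (\cite{HZ}), upgrading the $\cC^{1,\bar1}$-geodesic to a smooth, non-degenerate geodesic. Along a smooth geodesic the Mabuchi functional obeys the classical second-variation formula recalled in the introduction, and an affine (here constant) $\cM$ forces $\cG$ to be generated by a holomorphic vector field; this recovers the result of (\cite{LL}).

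The analytically substantive content is already packaged in Theorem (\ref{thm-ch-001}) and in the external input (\cite{HZ}), so the remaining work is assembly. I expect the delicate point to be the regularity upgrade: one must pass from the almost-everywhere, $L^\infty$-type lower bound $\cG|_{X_t} > \k_1\omega$ provided by Theorem (\ref{thm-ch-001}) to the uniform ellipticity, valid up to the boundary, that makes the linearized cscK operator invertible and lets He--Zeng's bootstrap close. Verifying that the non-degeneracy furnished here is precisely what that regularity scheme requires is the step demanding the most care.
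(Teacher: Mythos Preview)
Your proposal is correct and follows essentially the same route as the paper: observe that $\cM$ is constant along $\cG$, verify the $\ep$-affine condition (you spell out what the paper calls ``automatic'' --- the boundaries, being smooth minimizers, are cscK, so $\cK'_\ep(0)=\cK'_\ep(1)=0$), apply Theorem~(\ref{thm-ch-001}) to get uniform fiberwise non-degeneracy, and then appeal to He--Zeng and the smooth theory from \cite{LL}. Your closing caveat about the precise input required by the He--Zeng bootstrap is well placed; the paper is equally terse on that point and simply cites \cite{HZ} and \cite{LL}.
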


On the other hand, we can assume that the K\"ahler manifold $X$ satisfies $c_1(X)< 0$ or $c_1(X) = 0$. 
Then our computation in Section (4) would recover Chen's estimates in \cite{C00}, under the $\ep$-affine condition.
In conclusion, we can infer the following result by a similar argument as in Section (6), \cite{C00}. 

\begin{theorem}
\label{thm-app-003}
Suppose the manifold $X$ satisfies $c_1(X)< 0$ or $c_1(X) = 0$.
Assume that the Mabuchi functional $\cM$ is $\ep$-affine along a $C^{1,\bar 1}$-geodesic $\cG$.
Then $\cG$ is generated by a holomorphic vector field. 
\end{theorem}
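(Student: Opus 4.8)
The plan is to show that the degenerate direction of $\cG$ is spanned by a holomorphic vector field by passing to the limit in the estimate of Theorem (\ref{thm-app-001}), using the curvature hypothesis to remove the Ricci obstruction that blocks the general case. First I would fix the background metric $\omega$ adapted to the curvature: when $c_1(X)<0$ I take $\omega$ to be the K\"ahler--Einstein metric with $Ric\,\omega = -\omega$, and when $c_1(X)=0$ the Ricci-flat representative with $Ric\,\omega = 0$ (both exist by Aubin--Yau). With this choice the factor $-\tr_{\omega_{\ep}}Ric\,\omega$ appearing in equation (\ref{com-0022}) is nonnegative, so the constant $c_A$ controlling $\tr_{\omega_{\ep}}Ric\,\omega$ on $P_{\ep,A}$ can be taken to be zero. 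Consequently the lower bound for $(dd^c f_{\ep}-Ric\,\omega)\wedge\cG_{\ep}^n$ holds \emph{globally} on $\Gamma\times X$ with a constant independent of $A$, rather than merely on the sublevel sets $P_{\ep,A}$. This is exactly the point where the special cases depart from the Fano case.

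Next I would combine Lemma (\ref{lem-com-001}) with the $\ep$-affine hypothesis. Because the Ricci term no longer forces a growing constant, the integral estimate (\ref{com-016}) upgrades to a global bound $\int_{\Gamma\times X} || \dbar v_{\ep} ||^2_{\cG_{\ep}}\, i\,dt\wedge d\bar t\wedge\omega_{\ep}^n \to 0$ as $\ep\to 0$, where $v_{\ep} := \frac{\d}{\d t} - g^{\bar\b\a} g_{t\bar\b} \frac{\d}{\d z^{\a}}$ spans the kernel of the degenerate form $\chi_{\ep}$. Theorem (\ref{thm-ch-001}) then provides the crucial uniform non-degeneracy $\omega_{\ep}\geq \k_1\omega$, so the inverse-metric coefficients $g^{\bar\b\a}$ and hence the components of $v_{\ep}$ are uniformly bounded, and the measures $\omega_{\ep}^n$ are uniformly equivalent to $\omega^n$. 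This makes the $L^2$ spaces along the degeneration comparable and lets me extract a subsequence $v_{\ep_\l}$ converging weakly in $W^{1,2}$ to a limit $v_\infty$ with $\dbar v_\infty = 0$ in the distributional sense; elliptic regularity then promotes $v_\infty$ to a genuine holomorphic vector field on $\Gamma\times X$. Following Chen's argument (Section 6, \cite{C00}), the flow of $v_\infty$ generates the leaves of the degenerate foliation of $\cG$, so $\cG$ is generated by a holomorphic vector field.

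The main obstacle is precisely the step the paper flags as open in general: passing from $\dbar v_{\ep}\to 0$ in $L^2$ to the holomorphicity of the limit $v_\infty$, since $\omega_{\ep}^n$ need not converge uniformly to $\omega_{\vp}^n$. The curvature assumption $c_1(X)\leq 0$ is what resolves this: it both removes the unfavorable Ricci sign, so that the global $L^2$ control on $\dbar v_{\ep}$ survives without the $A$-dependent constant, and, together with the uniform non-degeneracy of Theorem (\ref{thm-ch-001}), restores uniform ellipticity of the fiber operators $\Delta_{\ep}$, which is exactly what is needed to identify the weak limit and to run Chen's a priori estimates. A secondary technical point I would still need to verify is that the non-degeneracy, established for almost every fiber $X_t$ in the limit, propagates along the $\ep$-geodesics uniformly in $\ep$ via the measure bound $\mu\{\kf(\vp_{\ep})\leq \k_1\}\to 0$ from the proof of Theorem (\ref{thm-app-001}), so that the coefficients $g^{\bar\b\a}g_{t\bar\b}$ stay bounded on the sets that actually carry the $L^2$ mass and the limiting vector field is well defined.
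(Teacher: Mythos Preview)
Your proposal is essentially the approach the paper sketches: the paper gives no detailed proof for Theorem~\ref{thm-app-003} but simply remarks that, under $c_1(X)\leq 0$, the computations of Section~5.1 recover Chen's estimates in \cite{C00}, and then defers to ``a similar argument as in Section~(6), \cite{C00}''. Your outline expands exactly this: exploit the favorable sign of $-\tr_{\omega_{\ep}}Ric\,\omega$ to drop the $A$-dependent constant, obtain the global $L^2$ bound on $\dbar v_{\ep}$, and invoke Chen's argument to produce the holomorphic vector field.

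One small correction: you cannot in general take $\omega$ to be the K\"ahler--Einstein metric with $Ric\,\omega=-\omega$ when $c_1(X)<0$, since the geodesic lives in a \emph{fixed} K\"ahler class $[\omega]$ which need not be proportional to $-c_1(X)$. What you can do, by the Calabi--Yau theorem, is choose the reference $\omega$ in the given class so that $Ric\,\omega$ equals a prescribed negative (respectively zero) representative of $2\pi c_1(X)$; this suffices for $-\tr_{\omega_{\ep}}Ric\,\omega\geq 0$, which is all you use. The other point you flag --- that Theorem~\ref{thm-ch-001} gives non-degeneracy of $\omega_{\vp}$, not of $\omega_{\ep}$ --- is real, and is precisely why the paper delegates to Chen's a priori estimates in \cite{C00}: with $Ric\,\omega\leq 0$ those estimates yield a uniform lower bound on $\omega_{\ep}$ directly along the $\ep$-geodesics, which then closes the argument without needing to propagate the a.e.\ bound from the limit back to the approximants.
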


Finally, we would like to emphasis that the boundaries of the geodesic $\cG$ are assumed to be smooth and non-degenerate in our set up. 
Therefore, one possible way to utilize the $L^2$-convergence of $\dbar v_{\ep}$ is to consider their behavior close enough to the boundary. 
Hence we will end up with the following observation, which may be useful in our later consideration. 

\begin{prop}
\label{prop-app-001}
Suppose $\cG$ is a $\cC^{1,\bar 1}$-geodesic connecting two K\"ahler potentials $\vp_0, \vp_1\in \cH$. 
Then its fiberwise volume element has the following convergence near the boundaries
$$ \kf(\vp_t) \rightarrow \kf(\vp_0); \ \ \ \kf(\vp_s) \rightarrow \kf(\vp_1), $$
as $t\rightarrow 0$ and $s\rightarrow 1$  in the $L^2$-sense. 
\end{prop}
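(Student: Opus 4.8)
The plan is to treat a fixed fiber $X_t$ near the boundary in the same spirit as the proof of Theorem (\ref{thm-l2-001}), but exploiting the decisive simplification that the limiting density $\kf(\vp_0)$ is now smooth and strictly positive, so that the gap phenomenon is not needed. First I would record two elementary facts. The $\cC^{1,\bar 1}$-regularity of $\cG$ supplies a uniform constant $C>0$ with $0\le \kf(\vp_t)\le C$ for all $t$; and since $\vp_0\in\cH$ is a genuine K\"ahler potential, $\omega_{\vp_0}>0$, so $\kf(\vp_0)$ is smooth with $c_0\le \kf(\vp_0)\le C$ for some $c_0>0$. In particular $F'(\kf(\vp_0))=\log\kf(\vp_0)+1$ is bounded, where $F(x)=x\log x$ as before; this boundary non-degeneracy is exactly what will let the convexity argument close.

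Next I would establish weak $L^2$-convergence of the volume element. Since $\Phi$ is continuous up to the boundary in $C^{1,\a}$-norm, $\vp_t\to\vp_0$ in $C^0(X)$ as $t\to 0$, and all the $\vp_t$ are bounded $\omega$-plurisubharmonic. By the continuity of the complex Monge-Amp\`ere operator under uniform convergence (Bedford-Taylor) the measures $\omega_{\vp_t}^n\to\omega_{\vp_0}^n$ weakly, and combined with the uniform $L^\infty$-bound this gives $\kf(\vp_t)\rightharpoonup\kf(\vp_0)$ weakly in $L^2(X)$. The third input is convergence of the entropy: by Theorem (\ref{thm-pre-0000}) the function $\cK(t)=\cM(\vp_t)$ is continuous up to $t=0$, while the energy pieces $\cE(\vp_t)$ and $\cE^{Ric\omega}(\vp_t)$ converge to their values at $\vp_0$ by the same weak convergence of the wedge products; subtracting yields $H(\vp_t)\to H(\vp_0)$, that is $\int_X F(\kf(\vp_t))\,\omega^n\to\int_X F(\kf(\vp_0))\,\omega^n$.

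Finally I would upgrade weak convergence to strong convergence using the uniform convexity of $F$ on $[0,C]$. Setting $H(x)=F(x)-F(y)-F'(y)(x-y)-\tfrac{1}{2C}(x-y)^2$ one checks $H(y)=0$, $H'(y)=0$ and $H''(x)=\tfrac1x-\tfrac1C\ge 0$ on $(0,C]$, so $F(x)-F(y)-F'(y)(x-y)\ge \tfrac1{2C}(x-y)^2$ for $x\in[0,C]$, $y\in(0,C]$. Taking $x=\kf(\vp_t)$, $y=\kf(\vp_0)$ and integrating gives
$$\int_X \big(\kf(\vp_t)-\kf(\vp_0)\big)^2\,\omega^n \le 2C\Big( \big[H(\vp_t)-H(\vp_0)\big] - \int_X F'(\kf(\vp_0))\,\big(\kf(\vp_t)-\kf(\vp_0)\big)\,\omega^n \Big).$$
The bracket tends to $0$ by the entropy convergence, and the linear term tends to $0$ because $F'(\kf(\vp_0))$ is bounded and $\kf(\vp_t)\rightharpoonup\kf(\vp_0)$ weakly in $L^2$; hence the left-hand side tends to $0$, which is the claimed $L^2$-convergence at $t=0$. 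The argument as $s\to 1$ is identical.

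The step I expect to require the most care is the weak convergence of the density itself: because $\kf$ is the nonlinear Monge-Amp\`ere determinant, weak convergence of the Hessians $dd^c\vp_t$ does \emph{not} by itself force $\kf(\vp_t)\rightharpoonup\kf(\vp_0)$, and one genuinely needs the continuity of the Monge-Amp\`ere operator under $C^0$-convergence to handle the nonlinearity. Once that and the boundary continuity of the entropy are secured, the uniform convexity estimate does the rest automatically, and it is precisely the smoothness (hence strict positivity) of the boundary density $\kf(\vp_0)$ that keeps $F'(\kf(\vp_0))$ bounded and thereby lets us dispense with the gap phenomenon used in the interior estimates.
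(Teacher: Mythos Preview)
Your proposal is correct and shares the paper's core strategy: both arguments reduce the $L^2$-convergence to the convergence of the entropy $H(\vp_t)\to H(\vp_0)$, and both obtain the latter from the continuity of $\cK(t)=\cM(\vp_t)$ up to the boundary (Theorem~\ref{thm-pre-0000}) together with the easy convergence of the energy pieces $\cE$ and $\cE^{Ric\omega}$. The difference is in how the implication ``entropy convergence $\Rightarrow$ $L^2$-convergence'' is justified. The paper simply invokes Chen--Tian (Theorem~7.1.1 in \cite{CT}) as a black box, whereas you give a self-contained argument: you first establish weak $L^2$-convergence of the densities via Bedford--Taylor continuity of the Monge--Amp\`ere operator under $C^0$-convergence of potentials, and then use the elementary uniform-convexity inequality $F(x)-F(y)-F'(y)(x-y)\ge \tfrac{1}{2C}(x-y)^2$ for $F(x)=x\log x$ on $[0,C]$, exploiting that $F'(\kf(\vp_0))$ is bounded because $\vp_0\in\cH$. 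Your route has the advantage of being entirely explicit and of making transparent exactly where the non-degeneracy of the boundary potential enters (namely, to bound $F'(\kf(\vp_0))$); the paper's route is shorter but relies on an external reference. Substantively the two arguments are the same.
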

\begin{proof}
By a Theorem proved in Chen-Tian (Theorem 7.1.1, \cite{CT}), 
it is enough to show 
$$ \lim_{t\rightarrow 0} \int_X \kf(\vp_t) \log \kf(\vp_t) \rightarrow   \int_X \kf(\vp_0) \log \kf(\vp_0), $$
and 
$$ \lim_{s\rightarrow 1} \int_X \kf(\vp_s) \log \kf(\vp_s) \rightarrow   \int_X \kf(\vp_1) \log \kf(\vp_1). $$
In other words, the entropy $H(\vp_t) (H(\vp_s))$ converges to $H(\vp_0) (H(\vp_1))$ as $t\rightarrow 0$ and $s\rightarrow 1$.
This is true because the Mabuchi functional $\cM $ is convex and continuous up to the boundaries of $\cG$.
\end{proof}


\section{Appendix}
In the following, we will provide a different proof of Theorem (\ref{thm-l2-001}).
This one is more complicated, but it will 
give an accurate estimate for the $L^2$-norm of the difference $f_\l - f$
directly from the convergence of the truncated entropies. 
We expect that this estimate will be useful for some independent interests.

For the beginning, denote $\k$ by the auxiliary function on the fiber $X_\t$ 
\begin{equation}
\label{kappa}
 \k(x, A): = \frac{e^{\chi -A}}{\omega^n} (x).
 \end{equation}
 As before, we omit the sub-index $\ep_{\l}$ in $\chi$ due to the uniform control on their $\cC^{1,\bar 1}$-norms.
Then the following result holds.  

\begin{lemma}
\label{lem-l2-0010}
Suppose we have 
$$ \lim_{\l\rightarrow \infty}  \int_X \kf(\vp_{\ep_\l}) \log\kf_A (\vp_{\ep_\l}) = \int_X \kf(\vp)\log\kf_A(\vp).$$
Then there exists a uniform constant $C$, only depending on the upper bound of $f_\l$ and $f$, satisfying 
\begin{equation}
\label{l2-0001}
\lim_{\l\rightarrow \infty} \int_X (f_\l - f)^2 \omega^n \leq \left(   2+ \frac{8C}{\k_0} \right) \max_{X_{\t}} \k,
\end{equation} 
for all $\k$ small enough (or $A$ large enough). 
\end{lemma}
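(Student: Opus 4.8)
The plan is to convert the hypothesis---the convergence of the truncated entropies $\int_X \kf(\vp_{\ep_\l})\log\kf_A(\vp_{\ep_\l})\,\omega^n$---into an $L^2$ bound by exploiting the strict convexity of the entropy density $\sigma(s):=s\log s$, whose second derivative $\sigma''(s)=1/s$ is bounded below by $1/C$ on the range $(0,C]$ of the volume elements, where $C$ is the uniform upper bound of $f_\l$ and $f$ coming from the $\cC^{1,\bar1}$-estimate. The decisive structural input is the gap phenomenon (\ref{ae-006}): on each fiber $X_\t$ we have, up to a null set, the disjoint decomposition $X_\t=P_t\cup P_t^c$, with $f>\k_0$ on $P_t$ and $f=0$ on $P_t^c$. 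I would estimate $\int_{P_t}(f_\l-f)^2$ and $\int_{P_t^c}(f_\l-f)^2$ separately, precisely because $\sigma$ is uniformly convex away from the origin but degenerates at $s=0$.

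On the non-degenerate region $P_t$ the truncation is inactive for the limit, i.e. $\kf_A(\vp)=f$ since $f>\k_0>\k$ for $A$ large, so that $\sigma(f)=f\log f$ there. The key two-point (Bregman) inequality I would use is
\begin{equation}
\sigma(a)-\sigma(b)-\sigma'(b)(a-b)\ \geq\ \frac{1}{2C}\,(a-b)^2,\qquad a\in[0,C],\ b\in(0,C],
\end{equation}
together with the elementary observation that replacing $\sigma(a)$ by $a\log\max\{a,\k\}$ only \emph{increases} the left-hand side when $a<\k$ (because then $\log\max\{a,\k\}=\log\k>\log a$). Applying this with $a=f_\l$, $b=f$ and integrating over $P_t$ bounds $\tfrac{1}{2C}\int_{P_t}(f_\l-f)^2$ by the entropy difference $\int_{P_t}f_\l\log\max\{f_\l,\k\}-\int_{P_t}\sigma(f)$ plus the linear term $-\int_{P_t}(\log f+1)(f_\l-f)$. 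The linear term tends to $0$ because $f_\l\rightharpoonup f$ weakly in $L^2$ and, crucially by the gap, $\log f$ is \emph{bounded} on $P_t$ (as $\k_0<f\leq C$), so $(\log f+1)\mathbf 1_{P_t}$ is a legitimate $L^2$ test function.

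On the degenerate region $P_t^c$ one has $(f_\l-f)^2=f_\l^2\leq C f_\l$, and $\int_{P_t^c}f_\l\to\int_{P_t^c}f=0$ again by weak $L^2$-convergence, while the truncated entropy contribution there is controlled from below by $(\log\k)\int_{P_t^c}f_\l$ using $\max\{f_\l,\k\}\geq\k$. Combining the two regions through the \emph{global} convergence $\int_X\kf(\vp_{\ep_\l})\log\kf_A(\vp_{\ep_\l})\to\int_X\kf(\vp)\log\kf_A(\vp)$ ties the $P_t$-entropy difference to the $P_t^c$-remainder; passing from the truncated element $\max\{f,\k\}$ back to $f$ costs error terms proportional to $\max_{X_\t}\k$, and bookkeeping the constants---the convexity modulus $1/C$ and the gap bound $\k_0$---produces the asserted estimate $\big(2+\tfrac{8C}{\k_0}\big)\max_{X_\t}\k$. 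The main obstacle is exactly the degenerate set $\{f=0\}$ together with the sub-truncation values $\{f_\l<\k\}$, where $\sigma$ fails to be uniformly convex and only truncated entropy data is available, so nothing a priori forces $\int f_\l^2$ to be small. The gap phenomenon is what rescues the argument: its uniform lower bound $\k_0$ off the zero set both legitimizes the linear test function on $P_t$ and confines the entire degenerate loss to an $O(\max_{X_\t}\k)$ error, which finally vanishes upon letting $A\to\infty$.
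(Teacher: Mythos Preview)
Your approach is correct and is in fact close in spirit to the paper's, since both rest on the strict convexity of the entropy density $\sigma(s)=s\log s$ and on the gap phenomenon to control the linear term. The organizational difference is real, however. The paper works with the \emph{truncated} convex function $h_\k(x)=\max\{x\log x,\,x\log\k\}$, linearly interpolates $u_t=t f_\l+(1-t)f$, applies the Fundamental Theorem of Calculus to $t\mapsto h_\k(u_t)$, and then distinguishes four cases according to whether $x\in P_{A,\t}$ or $x\in P_\t^c$ and whether the path $u_t$ crosses the truncation level $\k$; this case analysis is what produces the explicit error terms of order $\max_{X_\t}\k$ and the constant $(2+8C/\k_0)$. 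Your route is more economical: you apply the Bregman inequality for the \emph{untruncated} $\sigma$ on $P_t$, observe in one line that replacing $\sigma(f_\l)$ by $f_\l\log\max\{f_\l,\k\}$ only increases the left-hand side, and dispose of $P_t^c$ by the trivial bound $f_\l^2\le C f_\l$ together with $\int_{P_t^c}f_\l\to 0$. This bypasses the four cases entirely. A small remark on your final bookkeeping: as written, your argument actually gives $\limsup_\l\int_X(f_\l-f)^2\,\omega^n=0$ directly (each of the residual terms---the global truncated-entropy difference, the linear term on $P_t$, and the $P_t^c$-contribution bounded by $\max\{|\log\k|,|\log C|\}\int_{P_t^c}f_\l$---tends to $0$ for fixed $A$), which is strictly stronger than the stated $O(\max_{X_\t}\k)$ bound and in particular implies it; so your phrase ``produces the asserted estimate'' undersells what you have. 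What you lose relative to the paper is only the explicit quantitative dependence on $\k$ and $\k_0$, which the paper flags as potentially of independent interest but is not needed for Theorem~\ref{thm-l2-001}.
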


Here $\k_0$ is the gap of the volume element $\kf(\vp)$ (equation (\ref{ae-006})), which is a fixed constant. 
Therefore, Theorem (\ref{thm-l2-001}) directly follows from Lemma (\ref{lem-l2-0010}) if we take $A\rightarrow \infty$.

\subsection{The maximum function}
In order to prove this lemma, 
the first step is to investigate the following maximum function. 
Define a function $F: [0, +\infty) \rightarrow \mathbb{R}$ as 
$$F(x): = x\log x, $$
and $F(0) = 0$. Then $F$ is a convex continuous function on its domain.
In fact, it is smooth in $\mathbb{R}^+$, and 
 its first and second derivatives are 
$$ F'(x) = \log x +1;\ \ \    F''(x) = x^{-1} $$

Moreover, we can truncate $F$ by a small number $\k>0$ and introduce the following maximum function 
$$ h_{\k}(x): = \max\{ x\log x, x\log\k \}.  $$
This is also a convex and continuous function on $[0, +\infty)$,
and it is piecewise smooth in this domain.
Its first derivative exists everywhere on $\mathbb{R}^+$ except at the point $x=\k$, and we have 
\begin{equation}
\label{l2-001}
h'_{\k}(x) = \left\{ \begin{array}{rcl}
\log\k, 
& \mbox{for} &
 x< \k \\ 
1+ \log x,
& \mbox{for} & x > \k. \\
\end{array}\right.
\end{equation}
Furthermore, we can also compute its second derivative on $\mathbb{R}^+$ as 
\begin{equation}
\label{l2-002}
h''_{\k}(x) = \left\{ \begin{array}{rcl}
0, 
& \mbox{for} &
 x< \k \\ 
\delta(\k),
&\mbox{for}& x=\k \\
\frac{1}{x},
& \mbox{for} & x > \k, \\
\end{array}\right.
\end{equation}
where $\delta(\k)$ is the \emph{Dirac-delta} function at the point $x=\k$. 
We note that the Fundamental Theorem of Calculus is still satisfied for $h_{\k}, h'_{\k}, h''_{\k}$
on the interval $[0,1]$, since we can take the differentiation in the sense of the generalized derivatives.

Fixing a point $x$ on the fiber, we introduce another variable $t\in[0,1]$ and take 
$$u_t:  = t f_\l + (1-t) f = at +b,  $$
where $a: = (f_\l - f)(x)$ and $b: = f(x)$.
We note that $u_t$ is non-negative and has a uniform upper bound for all $t$, $\l$ and $x$,
and it is strictly positive for all $t>0$, or $t=0$ and $f(x) >0$. 
Define a new composition function as 
$$ F_{\l, \k}(t): = h_{\k} (u_t). $$
Observe that this function $F_{\l. \k}$ is actually linear in $t$ for all $u_t$ small. 
Moreover, its derivatives can be written as 
\begin{equation}
\label{l2-003}
F'_{\l, \k}(t) = \left\{ \begin{array}{rcl}
a \log \k, 
& \mbox{for} &
 at +b < \k \\ 
a\log(at +b) +a ,
&\mbox{for}& at+b > \k; \\
\end{array}\right.
\end{equation}

\begin{equation}
\label{l2-004}
F''_{\l, \k}(t) = \left\{ \begin{array}{rcl}
0, 
& \mbox{for} &
 at +b < \k \\ 
a \delta(t_0),
&\mbox{for}& at +b =\k \\
\frac{a^2}{at+b},
& \mbox{for} & at+b > \k, \\
\end{array}\right.
\end{equation}
where $t_0$ is determined by the equation
$$at_0 + b = \k. $$
In particular, we have $F'_{\l, \k}(0) = a\log\k$ if $f(x) < \k$,
and  $F'_{\l, \k}(0) = a\log b + a$ if $f(x) > \k$.  
Then the following convergence holds. 

\begin{lemma}
\label{lem-l2-001}
For all $A$ large enough, we have 
$$ \lim_{\l\rightarrow + \infty} \int_X F'_{\l, \k} (0) \omega^n  = 0.$$
\end{lemma}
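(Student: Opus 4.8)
The plan is to use the explicit piecewise formula for $F'_{\l, \k}(0)$ (specialize (\ref{l2-003}) to $t=0$) in order to recognize the integrand as a pairing of the difference $f_\l - f$ against a fixed multiplier, and then to invoke the weak $L^p$ convergence $f_\l \rightharpoonup f$. Recalling that $a = (f_\l - f)(x)$ and $b = f(x)$, one has $F'_{\l, \k}(0) = (f_\l - f)\log\k$ on the region $\{f < \k\}$ and $F'_{\l, \k}(0) = (f_\l - f)(\log f + 1)$ on $\{f > \k\}$. First I would therefore split the integral as
$$ \int_X F'_{\l, \k}(0)\,\omega^n = \int_{\{f<\k\}} (f_\l - f)\,\log\k\;\omega^n + \int_{\{f>\k\}} (f_\l - f)\,(\log f + 1)\;\omega^n, $$
where in each region the factor multiplying $f_\l - f$ does not depend on $\l$.

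Next I would verify that both multipliers lie in the dual space, so that weak convergence applies. Here the gap phenomenon (\ref{ae-006}) is essential: for $A$ large enough we have $\k = \k(x,A) = e^{\chi-A}/\omega^n < \k_0$ uniformly in $x$, since $\chi$ is continuous on the compact fiber $X_\t$. Consequently the gap forces $\{f<\k\}$ to coincide with $\{f=0\} = P_t^c$ and $\{f>\k\}$ with $\{f>0\} = P_t$ up to a null set, while the transition set $\{f=\k\}$ has measure zero. On $P_t^c$ the multiplier $\log\k(\cdot,A)$ is bounded, since $\chi$ is continuous and $\omega^n$ is a smooth positive volume form on $X_\t$; on $P_t$ we have $\k_0 < f \le C$ by the gap together with the uniform upper bound on the volume element coming from the $\cC^{1,\bar 1}$-estimate, so $\log f + 1$ is bounded there as well. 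Hence each multiplier belongs to $L^{\infty}(X,\omega^n) \subset L^2(X,\omega^n)$.

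Finally, since $f_\l \rightharpoonup f$ weakly in $L^2$ on $X_\t$ and each of the two multipliers is a fixed $L^2$-function, the pairing $\int_X (f_\l - f)\,g\,\omega^n \to 0$ for each such $g$; applying this to both terms yields the claim. The only delicate point — and the step I would treat with care — is the boundedness of the multiplier on the region $\{f>\k\}$: without the gap, the set $\{0 < f < \k\}$ could contribute an integrand with $\log f$ blowing up as $f \to 0$, which is not controlled by weak $L^p$ convergence. It is precisely the uniform gap $\k_0$ of (\ref{ae-006}) that excludes this region and renders the pairing argument routine.
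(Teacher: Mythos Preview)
Your proof is correct and follows the same approach as the paper: use the gap phenomenon to reduce the decomposition $\{f<\k\}\cup\{f>\k\}$ to $P_\t^c\cup P_{A,\t}$, observe that on each piece $F'_{\l,\k}(0)$ equals $(f_\l-f)$ times a fixed bounded function, and apply weak $L^p$ convergence. The paper carries out one additional algebraic rewriting---arriving at $\int_X(f_\l-f)\log\max\{f,\k\}+\int_X(f_\l-f)-\int_{P_\t^c}f_\l$ before invoking weak convergence---but the substance of the argument is identical to yours.
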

\begin{proof}

When the constant $A$ is large enough, we can assume $\k < \k_0/2$ on the fiber $X_{\t}$, 
where $\k_0$ is the gap of the fiber-wise volume element of $\cG$ defined in equation (\ref{ae-006}).
Then the fiber can be completely decomposed into two parts as in equation (\ref{ae-0055}) 
$$ X_{\t} =  P_{A,\t} \bigcup P_{\t}^c, $$
up to a set of measure zero. 
Recall that  the two sets can be re-written as follows 
$$ P_{A, \t} = \{ x\in X_{\t}; \ \ \   f(x) > \k(x)   \}; \ \ \   P^c_{\t}: = \{ x\in X_\t; \ \ \  f(x) =0 \}. $$
Then we have 

\begin{eqnarray}
\label{l2-005}
\int_X F'_{\l, \k} (0) &= &\int_{P_{A, \t}}  F'_{\l, \k} (0)+ \int_{P_{\t}^c}  F'_{\l, \k} (0)
\nonumber\\
&=& \int_{P_{A, \t}} (f_{\l} - f) \log f  + \int_{P_{A, \t}} (f_{\l} - f)  + \int_{P_\t^c} f_{\l}\log\k
\nonumber\\
&= & \int_{P_{A, \t}} (f_{\l} - f) \log f  + \int_{P_{\t}^c} (f_\l  -f)\log\k + \int_{P_{\t}^c} f \log\k
\nonumber\\
&+& \int_X (f_\l - f) - \int_{P^c_\t} (f_\l - f) 
\nonumber\\
&=& \int_X (f_\l - f) \log\max\{  f, \k\}  + \int_X (f_\l  - f) - \int_{P^c_{\t}} f_\l.
\end{eqnarray}
The three terms on the RHS of equation (\ref{l2-005}) will all converge to zero as $\l\rightarrow +\infty$,
since $|| f ||_{L^{\infty}}, || f_\l ||_{L^{\infty}}$ are uniformly bounded and $f_\l \rightarrow f$ weakly in $L^p$ for any $p\geq 1$,
and then our result follows.

\end{proof}

\subsection{The four cases}

Next we will apply the Fundamental Theorem of Calculus on the function $F_{\l, \k}(t)$ and its first derivative,
namely, we have 

\begin{eqnarray}
\label{l2-006}
F_{\l, \k}(1) - F_{\l, \k}(0) &=& h_{\k}(f_\l) - h_{\k}(f)
\nonumber\\
&=& \int_0^{t_0} F'_{\l, \k}(t)dt + \int_{t_0}^1 F'_{\l, \k}(t)dt.
\end{eqnarray}

A first observation is that the point $t_0$ may not be in the integration domain above. 
Suppose the point $x$ is in the subset $P_{\t}^c$,
and then we have $b =f(x) = 0$, $a = f_{\l}(x) > 0$.
Therefore, the point $t_0$ belongs to the interval $(0,1)$ 
if and only if $ 0< \k < a$. 

Otherwise, we have $t_0 \geq 1$ if $ f_\l (x) \leq \k $, but $t_0 \leq 0$ is not possible in this case 
since it means  $\k\leq 0$. 

On the other hand, suppose the point $x$ is in the subset $P_{A,\t}$. 
Then we have $b = f(x) > \k$ and $a = f_\l (x)- f(x)$.
Hence $t_0 \in (0,1)$ if and only if 
$a< 0$ and $b  > \k  > a+b = f_{\l}(x)$.

Otherwise,  when $a\geq 0$, we have $f_l(x) \geq f(x) > \k $ for $t_0 \leq 0$;
or when $a<0$, we have $\k  \leq f_\l (x)$ for $t_0 \geq 1$. 

In conclusion, we distinguish all situations into the following four cases: 
\begin{enumerate}

\item[(i)]
$x\in P_{A,\t}$, $f_\l (x) \geq f(x) > \k $ or $ f(x) > f_\l(x) \geq \k $;
\smallskip\\ 
\item[(ii)]
$x\in P^c_{\t}$,  $f(x) = 0$ and $f_\l (x) \leq \k$;
\smallskip\\ 
\item[(iii)]
$x\in P^c_{\t}$, $f(x)=0$ and $f_\l (x) > \k$;
\smallskip\\ 
\item[(iv)]
$x\in P_{A, \t}$, $f_\l (x) < \k < f(x)$.

\end{enumerate}

We note that these four cases are disjoint from each other. 
Then we will discuss case by case. 
For $\textbf{Case (i)}$, we note that 
$u_t > \k$ and then $h_{\k}(u_t) = u_t \log u_t$ for all $t\in [0,1]$.
Therefore, we can further compute as follows. 

\begin{eqnarray}
\label{l2-007}
F_{\l, \k}(1) - F_{\l, \k}(0) 
&=& F'_{\l,\k}(0) + \int_0^1 \int_0^t F_{\l,\k}''(s) ds dt
\nonumber\\
&=& F'_{\l,\k}(0) + \int_0^1 \int_0^t \frac{a^2 ds}{as +b} dt
\nonumber\\
&\geq& F'_{\l,\k}(0) + \frac{a^2}{2C}, 
\end{eqnarray}
where the constant $C$ is the uniform upper bound of $u_t$. 

For $\mathbf{Case (ii)}$, we note $u_t\leq \k$ and then $h_{\k}(u_t) = u_t \log \k$.
Hence we have 

\begin{eqnarray}
\label{l2-008}
F_{\l, \k}(1) - F_{\l, \k}(0) 
&=& F'_{\l,\k}(0) + \int_0^1 \int_0^t F_{\l,\k}''(s) ds dt
\nonumber\\
&=& F'_{\l,\k}(0). 
\end{eqnarray}

The two cases above are the easy ones. 
For the remaining cases, we will utilise equations (\ref{l2-003}) and (\ref{l2-004}) in the computation. 
In $\mathbf{Case (iii)}$, we note that $h_{\k}(u_t) = u_t \log\k $ for $t\leq t_0$ and $h_{\k}(u_t) = u_t \log u_t$ for $t> t_0$. 
Recall that $t_0 = \k /a$ in this case, and hence the computation follows.

\begin{eqnarray}
\label{l2-009}
F_{\l, \k}(1) - F_{\l, \k}(0) 
&=& \int_0^{\k/a} F'_{\l, \k} (t) dt  + \int_{\k/a}^1 F'_{\l,\k}(t) dt
\nonumber\\ 
&=& \frac{\k}{a} F'_{\l, \k} (0) + \int_0^{\k/a} \int_0^t F''_{\l, \t} (s) ds dt + \int_{\k/a}^1 F_{\l,\k}'(t) dt
\nonumber\\
&=& F'_{\l, \k} (0) + \int_{\k/a}^1 \left\{ a + \int_{\k/a}^t F''_{\l, \k}(s) ds  \right\} dt 
\nonumber\\
&=&  F'_{\l, \k} (0) + (a - \k) + \int_{\k/a}^1 \int_{\k/a}^t \frac{a^2 ds}{as +b } dt
\nonumber\\
&\geq &  F'_{\l, \k} (0) + (a - \k) + \frac{a^2}{2C} (1- \k/a)^2 
\nonumber\\
&\geq &  F'_{\l, \k} (0) + \frac{a^2}{2C} + a(1- \k/C) - \k. 
\end{eqnarray}
Recall that $a > 0$ in this case, and then we have 

\begin{equation}
\label{l2-010}
F_{\l, \k}(1) - F_{\l, \k}(0)  \geq  F'_{\l, \k} (0) + \frac{a^2}{2C}  - \k,
\end{equation}
for all $\k$ small enough. 
Finally, the most difficult one is $\mathbf{Case (iv)}$. 
As before, we first note that $h_{\k}(u_t) = u_t\log u_t$ for 
$t\leq t_0$ and $h_{\k}(u_t)$ and $h_{\k} (u_t) = u_t\log\k$ 
for $t > t_0$. Then we compute in a similar way.

\begin{eqnarray}
\label{l2-011}
F_{\l, \k}(1) - F_{\l, \k}(0) 
&=& \int_0^{\frac{\k - b}{a}} F'_{\l, \k} (t) dt  + \int_{\frac{\k-b}{a}}^1 F'_{\l,\k}(t) dt
\nonumber\\ 
&=& \frac{\k-b}{a} F'_{\l, \k} (0) + \int_0^{\frac{\k-b}{a}} \int_0^t F''_{\l, \t} (s) ds dt + \int_{\frac{\k-b}{a}}^1 F_{\l, \k}'(t) dt
\nonumber\\
&=& F'_{\l, \k} (0) + \int_0^{\frac{\k-b}{a}} \int_0^t \frac{a^2 ds}{as +b} dt + \int^1_{\frac{\k-b}{a}} \left\{     \int_0^{\frac{\k-b}{a}}  F''_{\l,\k}(s)ds  + a \right\}
\nonumber\\
&\geq & F'_{\l, \k} (0)  + a +b   + \frac{(\k-b)^2}{2C}  + \frac{ (\k -b)(a+b -\k)}{C}      -\k.
\end{eqnarray}

Recall that we have $a+b = f_\l(x) >0$, $\k-b = \k - f(x) < 0$ and $a+b - \k = f_\l(x) - \k <0$.
Hence the following estimate holds.

\begin{eqnarray}
\label{l2-012}
F_{\l, \k}(1) - F_{\l, \k}(0)  &\geq&  F'_{\l, \k} (0) + \frac{(\k -b)^2 }{2C} - \k
\nonumber\\
&\geq &  F'_{\l, \k} (0) + \frac{ \k_0^2 }{8C} - \k.
\end{eqnarray}
The last inequality in equation (\ref{l2-012}) holds is because that 
we have picked up $\k < \k_0 /2$, and $P_{A, \t}$ is actually the set where $f(x) > 0$ on the fiber which is equal to 
$$  \{ x\in X_{\t}; \ \ \  f (x) > \k_0   \}$$
up to a set of measure zero by the gap phenomenon.

Combining with equations (\ref{l2-007}) - (\ref{l2-012}) above, we conclude the following inequality after taking the integral on $X_{\t}$. 

\begin{eqnarray}
\label{l2-013}
 \int_X ( h_{\k} (f_\l) - h_{\k} (f)  ) \omega^n &\geq& \int_X F'_{\l, \k}(0) \omega^n -  \max_{X_{\t}} \k  
 \nonumber\\
 &+&  \frac{1}{2C}\int_{\left(  P^c_{\t} \bigcap \{ f_\l \leq \k \}   \right)^c  \bigcap   \left(  P_{A,\t} \bigcap \{    f_l < \k < f     \}         \right)^c             } (f_\l - f)^2 \omega^n
 \nonumber\\
 &+& \frac{1}{8C} \k_0^2 \mu\left(   P_{A, \t} \bigcap \{   f_l < \k < f \}    \right).
\end{eqnarray}
Then we are ready to prove the main theorem in this section.

\begin{proof}
[Proof of Lemma (\ref{lem-l2-0010})]
By our choice on the function $\k$ and the maximum function $h_{\k}$,
it follows  

\begin{equation}
\label{l2-014}
H_A (\vp_{\ep_\l}) - H_A (\vp) =  \int_X ( h_{\k} (f_\l) - h_{\k} (f)  ) \omega^n.
\end{equation}
Thanks to Corollary (\ref{cor-ae-001}), the LHS of equation (\ref{l2-013}) converges to zero as $\l \rightarrow +\infty$.
Meanwhile, our Lemma (\ref{lem-l2-001}) implies the first term on the RHS of equation (\ref{l2-013}) also converges to zero. 
Therefore, it implies  

\begin{equation}
\label{l2-015}
 \frac{8C}{\k_0^2}  ( \max_{X_\t} \k ) \geq \lim_{\l\rightarrow +\infty}  \mu\left(   P_{A, \t} \bigcap \{   f_l < \k < f \}    \right).
\end{equation}

Furthermore, we note that the two subsets $P_{A,\t} \bigcap \{    f_l < \k < f     \}  $ and $P^c_{\t} \bigcap \{ f_\l \leq \k \} $ are mutually disjoint. 
Then the third term on the RHS of equation (\ref{l2-013}) can be decomposed into the following three parts. 
$$ \int_X (f_\l -f)^2  - \int_{P^c_{\t} \bigcap \{ f_\l \leq \k \}  }(f_\l -f)^2 - \int_{P_{A,\t} \bigcap \{    f_l < \k < f     \} } (f_\l -f)^2. $$
The first negative term in the equation above can be estimated as

\begin{equation}
\label{l2-016}
 \int_{P^c_{\t} \bigcap \{ f_\l \leq \k \}  }(f_\l -f)^2 \leq \max_{X_\t} \k^2,
\end{equation}
and the second negative term can be estimated by equation (\ref{l2-015}) as 

\begin{equation}
\label{l2-017}
 \int_{P_{A,\t} \bigcap \{    f_l < \k < f     \} } (f_\l -f)^2 \leq  C^2  \mu\left(   P_{A, \t} \bigcap \{   f_l < \k < f \}    \right).
\end{equation}
Combing with equations (\ref{l2-014}) - (\ref{l2-017}) and take the limit in $\l$, we eventually conclude the following estimate

\begin{equation}
\label{l2-017}
\left(  2+ \frac{8C^3}{\k_0}       \right) \max_{X_\t} \k   \geq \lim_{\l \rightarrow +\infty} \int_X (f_\l - f)^2\omega^n,
\end{equation}
and then our result follows.

\end{proof}

\begin{bibdiv}
\begin{biblist}

\bib{Ma}{article}{
   author={ Mabuchi, T. },
   title={A functional integrating Futaki's invariant},
   journal={Proc. Japan. Acad},
   volume={61 Ser. A},
   date={1985},
   number={},
   pages={119-120},
}

\bib{BM}{article}{
   author={ Bando, S.H. },
   author={ Mabuchi, T. },
   title={Uniqueness of Einstein K\"ahler metrics modulo connected group actions},
   journal={Algebraic geometry, Sendai, 1985, 11-40, Adv. Stud. Pure Math.},
   volume={10},
   date={1987},
   number={},
   pages={11-40},
}

\bib{BT0}{article}{
   author={Bedford, E.}
   author={Talyor, A.},
   title={The Dirichlet Problem for a Complex Monge-Amp\`ere equation},
   journal={Inventiones math.},
   volume={37},
   date={1976},
   number={},
   pages={1-44},
}

\bib{BT}{article}{
   author={Bedford, E.}
   author={Talyor, A.},
   title={A new capacity for plurisubharmonic functions},
   journal={Acta Math.},
   volume={149},
   date={1982},
   number={},
   pages={1-41},
}

\bib{Ber}{article}{
   author={Berman, R.},
   title={On the strict convexity of the K-energy},
   journal={PAMQ}
   volume={15}
   number={4}
   date={2019}
   pages={983-999}
}

\bib{BB}{article}{
   author={Berman, R.}
   author={Berndtsson, B.},
   title={Convexity of the K-energy on the space of K\"ahler metrics and uniqueness of extremal metrics},
   journal={JAMS},
   volume={30},
   date={2017},
   number={4},
   pages={1165--1196},
}

\bib{Bo}{article}{
   author={Berndtsson, Bo},
   title={Subharmonicity properties of the Bergman kernel and some other functions associated to pseudoconvex domains},
   journal={Ann. Inst. Fourier, Grenoble},
   volume={56}
   date={2006}
   number={6}
   page={1633-1662}
}

\bib{Bo11}{article}{
   author={Berndtsson, Bo},
   title={A Brunn-Minkowvski type inequality for Fano manifolds and some uniqueness theorems in K\"ahler geometry},
   journal={Invent. math.},
   volume={200}
   date={2015}
   page={149-200}
}

\bib{C00}{article}{
   author={Chen, Xiuxiong},
   title={The space of K\"ahler metrics},
   journal={J. differential geometry},
   volume={56}
   date={2000}
   page={189-234}
}

\bib{CC1}{article}{
   author={Cheng, Jingrui}
   author={Chen, Xiuxiong},
   title={On the constant scalar curvature metrics, I: apriori estimates},
   journal={JAMS},
   volume={34}
   date={2021}
   pages={909-936}
}

\bib{CC2}{article}{
   author={Cheng, Jingrui}
   author={Chen, Xiuxiong},
   title={On the constant scalar curvature metrics, II: existence results},
   journal={JAMS},
   volume={2021}
   date={34}
   pages={937-1009}
}

\bib{CFH}{article}{
   author={Chen, Xiuxiong},
   author={ M. Feldman}, 
   author={J. Hu},
   title={ Geodesically convexity of small neighbourhood in space of K\"ahler metrics},
   journal={Journal of Functional Analysis},
   volume={279, issue 7 },
   date={2020}
   pages={}
}

\bib{CLP}{article}{
   author={Chen, Xiuxiong},
   author={Li, Long}
   author={P\u aun, Mihai},
   title={Approximation of weak geodesics and  subharmonicity of Mabuchi energy},
   journal={ Annales de la faculte des sciences de Toulouse Ser. 6},
   volume={25},
   date={2016},
   number={5},
   page={935-957}
}

\bib{CPZ}{article}{
   author={Chen, Xiuxiong},
   author={P\u aun, Mihai},
   author={Yu, Zeng}
   title={On deformation of extremal metrics },
   journal={arXiv:1506.01290},
}

\bib{CT}{article}{
   author={Chen, Xiuxiong},
   author={Tian, Gang}
   title={Geometry of K\"ahler metrics and foliations by holomorphic disks},
   journal={Publ. Math. Inst. Hautes \'Etudes Sci. },
   number={107},
   date={2008},
   page={1-107}
}

\bib{LD}{article}{
   author={Darvas, T.},
   author={Lempert, L. },
   title={Weak geodesics in the space of K\"ahler metrics},
   journal={Math. Res. Lett.},
   volume={19},
   date={2012}
   number={5},
   pages={1127-1135}
}

\bib{Dem}{article}{
   author={J.P. Demailly},
   title={Regularization of closed positive currents and Intersection Theory},
   journal={J. ALG. GEOM},
   volume={1},
   date={1992},
   page={361-409}
}

\bib{GT}{article}{
   author={Gilbarg, D.},
   author={Trudinger, N.S.}
   title={Elliptic partial differential equations of second order},
   journal={Springer},
   date={2001}
}

\bib{HZ}{article}{
   author={He, Weiyong },
   author={Zeng, Yu },
   title={Constant scalar curvature equation and the regularity of its weak solution},
   journal={Communications on Pure and Applied mathematics},
   volume={72},
   date={2019},
   number={2},
   pages={422-448},
}

\bib{LL}{article}{
   author={Li, Long},
   title={The strict convexity of the Mabuchi functional for energy minimizers  },
   journal={to appear in  Annales de la faculte des sciences de Toulouse.}
}

\end{biblist}
\end{bibdiv}

\end{document}